\sloppy \setlength{\parskip}{5pt plus  2pt minus 1pt} \textwidth
\newcommand{\F}{{\mathbb F}}
\newcommand{\Q}{{\mathbb Q}}
\newcommand{\ord}{\operatorname{ord}}
\newcommand{\di}{\operatorname{diag}}
\begin{document}
\numberwithin{equation}{section}

\newtheorem{theorem}{Theorem}[section]
\newtheorem{lemma}[theorem]{Lemma}

\newtheorem{prop}[theorem]{Proposition}
\newtheorem{proposition}[theorem]{Proposition}
\newtheorem{corollary}[theorem]{Corollary}
\newtheorem{corol}[theorem]{Corollary}
\newtheorem{conj}[theorem]{Conjecture}
\newtheorem{sublemma}[theorem]{Sublemma}

\theoremstyle{definition}
\newtheorem{defn}[theorem]{Definition}
\newtheorem{example}[theorem]{Example}
\newtheorem{examples}[theorem]{Examples}
\newtheorem{remarks}[theorem]{Remarks}
\newtheorem{remark}[theorem]{Remark}
\newtheorem{algorithm}[theorem]{Algorithm}
\newtheorem{question}[theorem]{Question}
\newtheorem{problem}[theorem]{Problem}
\newtheorem{subsec}[theorem]{}
\newtheorem{clai}[theorem]{Claim}

\def\toeq{{\stackrel{\sim}{\longrightarrow}}}
\def\into{{\hookrightarrow}}


\def\alp{{\alpha}}  \def\bet{{\beta}} \def\gam{{\gamma}}
 \def\del{{\delta}}
\def\eps{{\varepsilon}}
\def\kap{{\kappa}}                   \def\Chi{\text{X}}
\def\lam{{\lambda}}
 \def\sig{{\sigma}}  \def\vphi{{\varphi}} \def\om{{\omega}}
\def\Gam{{\Gamma}}   \def\Del{{\Delta}}
\def\Sig{{\Sigma}}   \def\Om{{\Omega}}
\def\ups{{\upsilon}}


\def\F{{\mathbb{F}}}
\def\BF{{\mathbb{F}}}
\def\BN{{\mathbb{N}}}
\def\Q{{\mathbb{Q}}}
\def\Ql{{\overline{\Q }_{\ell }}}
\def\CC{{\mathbb{C}}}
\def\R{{\mathbb R}}
\def\V{{\mathbf V}}
\def\D{{\mathbf D}}
\def\BZ{{\mathbb Z}}
\def\K{{\mathbf K}}
\def\XX{\mathbf{X}^*}
\def\xx{\mathbf{X}_*}

\def\AA{\Bbb A}
\def\BA{\mathbb A}
\def\HH{\mathbb H}
\def\PP{\Bbb P}

\def\Gm{{{\mathbb G}_{\textrm{m}}}}
\def\Gmk{{{\mathbb G}_{\textrm m,k}}}
\def\GmL{{\mathbb G_{{\textrm m},L}}}
\def\Ga{{{\mathbb G}_a}}

\def\Fb{{\overline{\F }}}
\def\Kb{{\overline K}}
\def\Yb{{\overline Y}}
\def\Xb{{\overline X}}
\def\Tb{{\overline T}}
\def\Bb{{\overline B}}
\def\Gb{{\bar{G}}}
\def\Ub{{\overline U}}
\def\Vb{{\overline V}}
\def\Hb{{\bar{H}}}
\def\kb{{\bar{k}}}

\def\Th{{\hat T}}
\def\Bh{{\hat B}}
\def\Gh{{\hat G}}

\def\cF{{\mathfrak{F}}}
\def\cC{{\mathcal C}}
\def\cU{{\mathcal U}}

\def\Xt{{\widetilde X}}
\def\Gt{{\widetilde G}}

\def\gg{{\mathfrak g}}
\def\hh{{\mathfrak h}}
\def\lie{\mathfrak a}

\def\XX{\mathfrak X}
\def\RR{\mathfrak R}
\def\NN{\mathfrak N}

\def\minus{^{-1}}

\def\GL{\textrm{GL}}            \def\Stab{\textrm{Stab}}
\def\Gal{\textrm{Gal}}          \def\Aut{\textrm{Aut\,}}
\def\Lie{\textrm{Lie\,}}        \def\Ext{\textrm{Ext}}
\def\PSL{\textrm{PSL}}          \def\SL{\textrm{SL}}
\def\loc{\textrm{loc}}
\def\coker{\textrm{coker\,}}    \def\Hom{\textrm{Hom}}
\def\im{\textrm{im\,}}           \def\int{\textrm{int}}
\def\inv{\textrm{inv}}           \def\can{\textrm{can}}
\def\id{\textrm{id}}              \def\Char{\textrm{char}}
\def\Cl{\textrm{Cl}}
\def\Sz{\textrm{Sz}}
\def\ad{\textrm{ad\,}}
\def\SU{\textrm{SU}}
\def\Sp{\textrm{Sp}}
\def\PSL{\textrm{PSL}}
\def\PSU{\textrm{PSU}}
\def\rk{\textrm{rk}}
\def\PGL{\textrm{PGL}}
\def\Ker{\textrm{Ker}}
\def\Ob{\textrm{Ob}}
\def\Var{\textrm{Var}}
\def\poSet{\textrm{poSet}}
\def\Al{\textrm{Al}}
\def\Int{\textrm{Int}}
\def\Smg{\textrm{Smg}}
\def\ISmg{\textrm{ISmg}}
\def\Ass{\textrm{Ass}}
\def\Grp{\textrm{Grp}}
\def\Com{\textrm{Com}}
\def\rank{\textrm{rank}}

\def\char{\textrm{char}}

\newcommand{\Or}{\operatorname{O}}

\def\tors{_\def{\textrm{tors}}}      \def\tor{^{\textrm{tor}}}
\def\red{^{\textrm{red}}}         \def\nt{^{\textrm{ssu}}}

\def\sss{^{\textrm{ss}}}          \def\uu{^{\textrm{u}}}
\def\mm{^{\textrm{m}}}
\def\tm{^\times}                  \def\mult{^{\textrm{mult}}}

\def\uss{^{\textrm{ssu}}}         \def\ssu{^{\textrm{ssu}}}
\def\comp{_{\textrm{c}}}
\def\ab{_{\textrm{ab}}}

\def\et{_{\textrm{\'et}}}
\def\nr{_{\textrm{nr}}}

\def\nil{_{\textrm{nil}}}
\def\sol{_{\textrm{sol}}}
\def\End{\textrm{End\,}}

\def\til{\;\widetilde{}\;}

\def\min{{}^{-1}}

\def\AGL{{\mathbb G\mathbb L}}
\def\ASL{{\mathbb S\mathbb L}}
\def\ASU{{\mathbb S\mathbb U}}
\def\AU{{\mathbb U}}





\title[From Thompson to Baer--Suzuki]
{{\bf From Thompson to Baer--Suzuki: a sharp characterization of the
solvable radical}}

\author[Gordeev, Grunewald, Kunyavskii,  Plotkin] {Nikolai
Gordeev, Fritz Grunewald,  Boris Kunyavskii, Eugene Plotkin }
\address{Gordeev: Department of Mathematics, Herzen State
Pedagogical University,
48 Moika Embankment, 191186, St.Petersburg, RUSSIA} \email{nickgordeev@mail.ru}

\address{Grunewald: Mathematisches Institut der
Heinrich-Heine-Universit\"at D\"usseldorf, Universit\"atsstr. 1, 40225
D\"usseldorf, GERMANY} \email{grunewald@math.uni-duesseldorf.de}

\address{ Kunyavskii: Department of
Mathematics, Bar-Ilan University, 52900 Ramat Gan, ISRAEL}
\email{kunyav@macs.biu.ac.il}
\address{ Plotkin: Department of
Mathematics, Bar-Ilan University, 52900 Ramat Gan, ISRAEL}
\email{plotkin@macs.biu.ac.il}


\begin{abstract}
We prove that an element $g$ of prime order $ >3$ belongs to the
solvable radical $\RR (G)$ of a finite (or, more generally, a
linear) group if and only if for every $x\in G$ the subgroup
generated by $g, xgx^{-1}$ is solvable. This theorem implies that  a
finite (or a linear) group $G$ is solvable if and only if in each
conjugacy class of $G$ every two elements generate a solvable
subgroup.
\end{abstract}

\maketitle



\section{Introduction} \label{sec:intro}

The classical Baer--Suzuki theorem \cite{Ba}, \cite{Su2}, \cite{AL}
states that

\begin{theorem} [Baer--Suzuki] \label{th:BS}
The nilpotent radical of a finite group $G$ coincides with the
collection of $g\in G$ satisfying the property: for every $x\in G$
the subgroup generated by $g$ and $xgx\min$ is nilpotent.
\end{theorem}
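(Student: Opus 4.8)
The plan is to establish the two opposite inclusions, the routine one by a direct normality argument and the substantial one by induction on $|G|$ after reducing to prime-power elements. Throughout write $g^G$ for the conjugacy class of $g$ and $F(G)=\prod_p O_p(G)$ for the nilpotent radical. The easy inclusion is immediate: if $g\in F(G)$ then, $F(G)$ being normal, every conjugate $xgx^{-1}$ lies in $F(G)$, so $\langle g,xgx^{-1}\rangle$ is a subgroup of the nilpotent group $F(G)$ and is nilpotent. For the converse, let $S$ be the set of $g$ for which $\langle g,xgx^{-1}\rangle$ is nilpotent for every $x\in G$. I would first reduce to $p$-elements. The set $S$ is closed under powers, since $\langle g^k,xg^kx^{-1}\rangle\le\langle g,xgx^{-1}\rangle$; and a finite nilpotent group generated by two $p$-elements is a $p$-group, because its quotient by the Sylow $p$-subgroup is generated by the trivial images of those elements. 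Hence for a $p$-element $g$ the condition $g\in S$ is equivalent to requiring $\langle g,xgx^{-1}\rangle$ to be a $p$-group for all $x$. Decomposing an arbitrary $g\in S$ into its commuting $p$-parts $g_p$, each a power of $g$ and hence again in $S$, it suffices to prove that every $p$-element of $S$ lies in $O_p(G)$, for then $g\in\prod_p O_p(G)=F(G)$. This reduces the theorem to the core assertion: a $p$-element $g$ with $\langle g,xgx^{-1}\rangle$ a $p$-group for all $x$ satisfies $g\in O_p(G)$.

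I would prove the core assertion by induction on $|G|$ through a chain of reductions. First, if $O_p(G)\ne1$, pass to $\bar G=G/O_p(G)$: the image $\bar g$ inherits the hypothesis and $O_p(\bar G)=1$, so by induction $\bar g=1$ and $g\in O_p(G)$. Thus I may assume $O_p(G)=1$ and must show that the only $p$-element with the property is trivial. Since the order-$p$ power of any nontrivial $p$-element again has the property, it suffices to rule out a nontrivial element $g$ of order exactly $p$. Finally, replacing $G$ by the normal closure $N=\langle g^G\rangle$: if $N\ne G$ then $O_p(N)$ is characteristic in $N\trianglelefteq G$, hence a normal $p$-subgroup of $G$ lying in $O_p(G)=1$, so induction applied inside $N$ forces $g\in O_p(N)=1$, a contradiction. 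Hence I am reduced to the situation $O_p(G)=1$, $G=\langle g^G\rangle$, and $g$ of order $p$, where I must derive a contradiction.

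The tractable part of this kernel is the case where $G$ has an abelian minimal normal subgroup $N$; since $O_p(G)=1$, such an $N$ is an elementary abelian $q$-group with $q\ne p$. For every $n\in N$ the element $(ngn^{-1})g^{-1}$ lies in $N$, because $N$ is normal, and simultaneously lies in the $p$-group $\langle g,ngn^{-1}\rangle$; as $N$ is a $q$-group this forces $(ngn^{-1})g^{-1}=1$, i.e. $N\le C_G(g)$. The same computation applies to every conjugate of $g$, so $N$ centralizes $\langle g^G\rangle=G$ and therefore $N\le Z(G)$. Passing to $G/N$, the image $\bar g$ still has order $p$ and the pairwise property, while $O_p(G/N)=1$ (the preimage of $O_p(G/N)$ splits as $N$ times a normal $p$-subgroup of $G$, which is trivial); induction gives $\bar g=1$, i.e. $g\in N$, which is impossible since $g$ has order $p$ and $N$ is a $q$-group. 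Thus I may assume that every minimal normal subgroup of $G$ is nonabelian.

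The main obstacle is precisely this surviving case, in which the socle of $G$ is a product of nonabelian simple groups and $p$ may divide its order. Here the commutator trick breaks down, because $(ngn^{-1})g^{-1}$ can legitimately be a nontrivial $p$-element, so membership in a $p$-group yields no contradiction; one must instead exploit the pairwise condition through the $p$-local geometry of $G$. Concretely I would fix a Sylow $p$-subgroup $P$ containing $g$, study how the conjugates of $g$ lying in $P$ are fused, and use a transfer/focal-subgroup argument in $N_G(P)$ to show that the hypothesis would confine $g$ to a proper normal subgroup or to $O_p(G)$, contradicting $G=\langle g^G\rangle$ and $O_p(G)=1$. This local-to-global step is the genuine heart of Baer--Suzuki and the hardest part of the argument; I would carry it out following the classification-free proof of Alperin--Lyons \cite{AL}.
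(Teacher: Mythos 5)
The paper does not actually prove this statement: it is the classical Baer--Suzuki theorem, quoted with attributions to Baer, Suzuki, and Alperin--Lyons, so there is no internal proof to measure yours against. Judged on its own terms, your write-up correctly carries out every formal reduction. The easy inclusion, the reduction to $p$-elements (the defining set is closed under powers, and a nilpotent group generated by two $p$-elements is a $p$-group), the passage to $O_p(G)=1$, to $g$ of order exactly $p$, to $G=\langle g^G\rangle$, and the elimination of an abelian minimal normal $q$-subgroup $N$ via the observation that $[n,g]$ lies in both $N$ and the $p$-group $\langle g,ngn^{-1}\rangle$, forcing $N\le Z(G)$, are all correct and standard.

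The problem is that the case you arrive at last --- $O_p(G)=1$, $G=\langle g^G\rangle$, all minimal normal subgroups nonabelian --- is not a residual technicality: it is the entire content of Baer--Suzuki, and you do not prove it. Deferring it to \cite{AL} leaves the argument incomplete, and the one-sentence strategy you sketch for it is not the argument of that paper and is unlikely to succeed as described: transfer and the focal subgroup theorem control the quotient $G/O^{p}(G)$ at the top of the group, whereas the conclusion you need concerns $O_p(G)$ at the bottom, and analysing fusion of conjugates of $g$ inside one Sylow $p$-subgroup gives no purchase on that. The Alperin--Lyons proof is of a different shape: one chooses a subset $S$ of the class $g^G$ maximal subject to $\langle S\rangle$ being a $p$-group and plays $N_G(\langle S\rangle)$ against elements of $g^G$ outside it; none of your reductions (in particular the minimal-normal-subgroup analysis) is needed for, or feeds into, that argument. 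So what you have is a correct shell of reductions wrapped around a missing core.
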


Within past few years a lot of efforts have been made in order to
describe the solvable radical of a finite group and to establish a
sharp analogue of the Baer--Suzuki theorem with respect to the
solvability property (see \cite{Fl2}, \cite{Fl3}, \cite{GGKP1},
\cite{GGKP2}). In particular, the following problem is parallel to
the Baer--Suzuki result:

\begin{problem} \label{pr:1}
{\rm Let $G$ be a finite group with the solvable radical $\RR(G)$.
What is the minimal number $k$ such that $g\in \RR(G)$ if and only
if the subgroup generated by $x_1gx_1^{-1},\dots, x_kgx_k^{-1}$ is
solvable for every $x_1,\dots, x_k$ in $G$?}
\end{problem}

Recently (see \cite{GGKP3}) it was proved that

\begin{theorem} \label{th:main}
The solvable radical of a finite group $G$ coincides with the
collection of $g\in G$ satisfying the property: for every three
elements $a, b, c \in G$ the subgroup generated by the conjugates
$g,  aga\min,  bgb\min,  cgc\min$ is solvable.
\end{theorem}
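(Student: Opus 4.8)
The plan is to establish the two inclusions separately. One of them is immediate: if $g \in \RR(G)$, then since $\RR(G)$ is normal each conjugate $aga\min$, $bgb\min$, $cgc\min$ again lies in $\RR(G)$, so the subgroup they generate together with $g$ is contained in the solvable group $\RR(G)$ and is therefore solvable. Thus every element of $\RR(G)$ has the stated property, and the substance of the theorem is the reverse inclusion: an element $g$ for which every four conjugates $g, aga\min, bgb\min, cgc\min$ generate a solvable subgroup must lie in $\RR(G)$.

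To prove this I would argue by contradiction and reduce the ambient group in two stages. First, the hypothesis on $g$ is inherited by the image $\bar g$ in the quotient $\bar G = G/\RR(G)$: solvability passes to quotients, and every conjugate of $\bar g$ in $\bar G$ is the image of a conjugate of $g$. Since $\RR(\bar G)=1$, it suffices to show that in a finite group with trivial solvable radical an element satisfying the property is trivial; so assume $\RR(G)=1$ and suppose for contradiction that $g\neq 1$. Now the generalized Fitting subgroup $F^*(G)$ equals the socle and is a direct product $S_1\times\dots\times S_n$ of non-abelian finite simple groups with $C_G(F^*(G))=1$, and $G$ acts on $\{S_1,\dots,S_n\}$ by conjugation. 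The nontrivial element $g$ acts nontrivially on $F^*(G)$; passing to the normalizer of a single $g$-orbit of factors, and using that both subgroups and quotients of solvable groups are solvable, I would reduce the problem of producing a non-solvable subgroup from conjugates of $g$ to the same problem inside an almost simple group $L$ with simple socle $S$. The care needed here is the separate treatment of the case in which $g$ genuinely permutes the factors and the case in which it stabilizes a factor and induces a (possibly outer) automorphism on it.

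The core is then a statement about almost simple groups, to be proved via the classification of finite simple groups: if $L$ is almost simple with socle $S$ and $1\neq g\in L$, then there exist $a,b,c\in L$ with $\langle g, aga\min, bgb\min, cgc\min\rangle$ non-solvable; indeed one aims to choose the conjugators so that these four elements generate a subgroup containing $S$. This is precisely where the number four enters, three well-chosen conjugators being enough to break out of every solvable overgroup of $g$. I expect this CFSG case analysis --- alternating groups, the families of groups of Lie type together with their diagonal, field and graph automorphisms, and the sporadic groups --- to be the main obstacle and the bulk of the work, the preceding structural reductions being comparatively routine. Feeding this simple-group input back through the reductions produces conjugates of $g$ in $\bar G$ generating a non-solvable subgroup, contradicting the inherited hypothesis; hence $\bar g = 1$, that is $g\in\RR(G)$, which completes the proof.
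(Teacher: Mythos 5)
First, a point of orientation: this paper does not actually prove Theorem \ref{th:main}; it quotes it from \cite{GGKP3} and devotes itself to the sharper Theorem \ref{th:bigorder}. So the only thing to compare your proposal against is the reduction machinery of Section \ref{sec:strat}, which is the same skeleton you describe: pass to $G/\RR(G)$, locate the element's action on a product $H_1\times\cdots\times H_n$ of isomorphic nonabelian simple groups (the paper uses the CR-radical rather than $F^*(G)$, an immaterial difference), split into the case where $g$ permutes the factors nontrivially and the case where it induces an automorphism of a single factor, and finish the latter case by a CFSG-based statement about almost simple groups. Your outline of that skeleton is correct.

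The genuine gap is that your proposal contains only the skeleton. Everything that makes the theorem true --- and in particular everything that makes the number \emph{three} correct --- is deferred. Concretely, two things are missing. (1) In the permutation case you say ``the care needed here is the separate treatment\dots'' but give no argument; note that the paper's Section \ref{sec:strat} handles the analogous step for Theorem \ref{th:bigorder} by an explicit computation (producing $g(x)x^{-1}$ and $g^2(x)x^{-1}$ whose first coordinates are prescribed generators $a,b$ of $H$), and that computation uses the hypothesis that the order of $g$ exceeds $3$; for Theorem \ref{th:main}, where $g$ may have order $2$ or $3$ and may act as a transposition on the factors, a different argument with up to three conjugators is required, and you have not supplied it. (2) The almost-simple statement --- for every almost simple $L$ and every $1\neq g\in L$ there exist $a,b,c$ with $\langle g, aga^{-1}, bgb^{-1}, cgc^{-1}\rangle$ nonsolvable --- is the entire content of the theorem (it is exactly where sharpness against the triples of transpositions in $S_n$ must be negotiated), and your proposal only names it as a goal. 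As written, the proposal establishes the trivial inclusion and a routine reduction, but not the theorem.
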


Theorem \ref{th:main} is sharp: 
in the symmetric groups $S_n$ $(n\geq 5)$ every triple of
transpositions generates a solvable subgroup.

However, as mentioned by Flavell \cite{Fl2}, one can expect a
precise analogue of the Baer--Suzuki theorem to hold for the
elements of prime order greater than 3 in $\RR (G)$. Our main result
confirms this expectation:

\begin{theorem} \label{th:bigorder}
Let $G$ be a finite group. An element $g$ of prime order $\ell >3$
belongs to $\RR (G)$ if and only if for every $x\in G$ the subgroup
$H=\left< g,xgx\min\right>$ is solvable.
\end{theorem}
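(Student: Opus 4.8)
The plan is to prove the forward direction by reduction to a minimal counterexample and then invoke the classification of finite simple groups. The "only if" direction is trivial: if $g \in \RR(G)$, then $\RR(G)$ is a normal solvable subgroup containing every conjugate of $g$, so $H = \langle g, xgx^{-1}\rangle \leq \RR(G)$ is solvable. For the substantive "if" direction, suppose $g$ has prime order $\ell > 3$ and $\langle g, xgx^{-1}\rangle$ is solvable for all $x \in G$, yet $g \notin \RR(G)$. I would pass to $\bar G = G/\RR(G)$, noting that the image $\bar g$ still has order $\ell$ (or is trivial, which we exclude), still has the two-conjugate solvability property, and now lies in a group with trivial solvable radical. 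So it suffices to derive a contradiction assuming $\RR(G) = 1$ and $g \neq 1$.

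The core reduction is to the almost simple case. With $\RR(G) = 1$, let $N = \soc(G)$ be the socle, a direct product of nonabelian simple groups permuted by $G$ under conjugation. The element $g$ acts on this set of simple factors; I would first handle the action combinatorially, showing that if $g$ moves factors nontrivially one can build a conjugate $xgx^{-1}$ such that $\langle g, xgx^{-1}\rangle$ surjects onto a nonsolvable group, contradicting the hypothesis. This localizes the problem: $g$ must stabilize (and act nontrivially on, since $\RR(G)=1$ forbids it centralizing all of $N$) essentially a single simple factor, reducing us to the situation where $G$ is almost simple with socle a nonabelian simple group $S$, and $g \in G$ induces an automorphism of order $\ell$ on $S$.

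The heart of the argument is then the following assertion, to be checked against the classification: \emph{for every finite nonabelian simple group $S$ and every automorphism $g$ of order a prime $\ell > 3$, there exists $x \in \langle S, g\rangle$ such that $\langle g, xgx^{-1}\rangle$ is nonsolvable.} This is a statement about generating nonsolvable subgroups from two conjugate automorphisms of prime order. I would organize the verification by the type of $S$: alternating groups, groups of Lie type (split by whether $g$ is inner, diagonal, field, or graph-field, using the Steinberg endomorphism description of automorphisms), and the finitely many sporadic groups. For each family the goal is to exhibit a single suitable conjugating element, typically by choosing $x$ so that $g$ and $xgx^{-1}$ generate something containing $S$ itself or at least a known nonsolvable subgroup; bounds on the order $\ell > 3$ are precisely what rule out the small degenerate configurations (such as the $S_n$ example with transpositions, where $\ell = 2$) that make the analogous statement fail for small primes.

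The main obstacle is this last classification-dependent step, and within it the groups of Lie type in small rank or small characteristic, where the relevant subgroups generated by a pair of prime-order elements can accidentally be solvable and must be controlled by explicit structural arguments rather than general position. The condition $\ell > 3$ should be used crucially here to exclude the exceptional behavior; I expect the delicate cases to be those where $g$ acts as a field or graph automorphism of order $\ell$, and where one must carefully analyze centralizers $C_S(g)$ and the subgroups they generate with a conjugate. The bookkeeping of which pairs $(S,\ell)$ survive, and supplying the conjugating element in each, is where the real work lies; the group-theoretic reductions above, while requiring care with the socle action, are comparatively routine once the almost simple case is isolated.
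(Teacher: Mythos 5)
Your proposal follows essentially the same route as the paper: the trivial reverse direction, passage to $G/\RR(G)$, a combinatorial argument on the permutation action of $g$ on the simple factors of the socle (the paper's CR-radical) to reduce to the almost simple case, and then the CFSG-dependent verification that every almost simple group satisfies the two-conjugate nonsolvability property for elements of prime order $\ell>3$ (the paper's Theorem \ref{th:radel}). The bulk of the paper's actual work lies in that last case-by-case verification (alternating, sporadic, rank-one Lie type, higher rank, and ${}^2F_4$ separately), which you correctly identify and localize but only outline.
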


Theorem \ref{th:bigorder} together with Burnside's ${p^\alpha
q^\beta}$-theorem  implies

\begin{corol} \label{corol:solv}
A finite group $G$ is solvable if and only if in each conjugacy
class of $G$ every two elements generate a solvable subgroup.
\end{corol}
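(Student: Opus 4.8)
The plan is to treat the two implications separately. The forward direction is immediate: if $G$ is solvable then every subgroup of $G$ is solvable, so a fortiori any two elements---conjugate or not---generate a solvable subgroup. All the content lies in the converse, and my strategy is to feed the hypothesis into Theorem \ref{th:bigorder} and then conclude with Burnside's $p^\alpha q^\beta$-theorem.

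First I would reformulate the hypothesis. To say that in each conjugacy class of $G$ every two elements generate a solvable subgroup is precisely to say that $\langle g, xgx^{-1}\rangle$ is solvable for all $g,x\in G$, since the elements conjugate to $g$ are exactly the $xgx^{-1}$. Thus the assumption of the corollary is the hypothesis occurring in Theorem \ref{th:bigorder}, now imposed simultaneously for every $g\in G$. Next I would pass to the quotient $\Gb = G/\RR(G)$, whose solvable radical is trivial. The key observation is that this reformulated hypothesis descends to $\Gb$: the image of the solvable group $\langle g, xgx^{-1}\rangle$ under the quotient map is solvable, and conjugacy is preserved by the map, so every pair of conjugate elements of $\Gb$ again generates a solvable subgroup. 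Since $G$ is solvable if and only if $\Gb=1$, it suffices to prove that a group $\Gb$ with trivial solvable radical satisfying this hypothesis must be trivial.

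So I would assume $\Gb\neq 1$ and derive a contradiction. If $\Gb$ contained an element $g$ of prime order $\ell>3$, then the hypothesis together with Theorem \ref{th:bigorder} applied to $\Gb$ would force $g\in\RR(\Gb)=1$, which is absurd. Hence $\Gb$ has no element of prime order exceeding $3$, and by Cauchy's theorem the only primes dividing $|\Gb|$ are $2$ and $3$, so $|\Gb|=2^a3^b$. Burnside's $p^\alpha q^\beta$-theorem then shows that $\Gb$ is solvable, whence $\Gb=\RR(\Gb)=1$, the desired contradiction.

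I do not expect a genuine obstacle here: the entire difficulty has already been absorbed into Theorem \ref{th:bigorder}, and what remains is a clean reduction to the radical quotient followed by an application of Burnside. The only point that demands a little care is the verification that the hypothesis is inherited by $\Gb$, and the correct reading of Theorem \ref{th:bigorder} as an equivalence---in particular, that in a group with trivial solvable radical no nontrivial element of prime order $>3$ can satisfy the two-generator solvability condition.
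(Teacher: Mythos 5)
Your proof is correct and is exactly the argument the paper intends: it only remarks that Theorem \ref{th:bigorder} together with Burnside's $p^\alpha q^\beta$-theorem yields the corollary, and your reduction to $G/\RR(G)$, the descent of the hypothesis, and the application of Cauchy plus Burnside fill in that sketch in the standard way.
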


\begin{remark}
{\rm A standard argument (cf. \cite[Theorem 4.1]{GKPS},
\cite[Theorem 1.4]{GGKP3}) shows that Theorem \ref{th:bigorder} and
Corollary \ref{corol:solv} remain true for the linear groups (not
necessarily finite).}
\end{remark}

\begin{remark}
{\rm Corollary \ref{corol:solv} can be viewed as an extension of a
theorem of J.~Thompson \cite{Th}, \cite{Fl1} which states that  a
finite group $G$ is solvable if and only if every two-generated
subgroup of $G$ is solvable.}
\end{remark}

\begin{remark}
{\rm The proof of Theorem \ref {th:bigorder} uses the classification
of finite simple groups (CFSG). The proof of Corollary
\ref{corol:solv} can be obtained without classification using the
above mentioned J.~Thompson's characterization of the minimal
non-solvable groups. Flavell managed to prove, without CFSG, an
analogue of Theorem \ref{th:main} for $k=10$ \cite{Fl2} and Theorem
\ref{th:main} under the additional assumption that $g\in G$ is of
prime order $\ell >3$.}
\end{remark}

\begin{remark}
{\rm R.~Guralnick informed us that Theorems \ref{th:bigorder} and
\ref{th:main} were independently proved in forthcoming works by
Guest, Guralnick, and Flavell  \cite{Gu}, \cite{FGG}. Flavell
\cite{FGG} reduced $k$ in Problem \ref{pr:1} to 7 with a proof which
does not rely on CFSG.}
\end{remark}

\begin{remark}
The problem of explicit description of the solvable radical of a
finite group in terms of quasi-Engel sequences (see \cite{BBGKP},
\cite{GPS}) is still open: there is no explicit analogue of Baer's
theorem on characterizing the nilpotent radical as the collection of
Engel elements. However, a recent result by J.~S.~Wilson \cite{Wi},
stating the existence of a countable set of words in two variables
(in spirit of \cite{BW}) which can be used to describe the solvable
radical, gives much hope for such a characterization.
\end{remark}


The results of the present paper were announced in \cite{GGKP4}.


\noindent{\it Notational conventions}. Whenever possible, we
maintain the notation of \cite{GGKP2} which mainly follows
\cite{St1}, \cite{Ca2}. Let $G(\Phi,K)$ be a Chevalley group where
$\Phi$ is a reduced irreducible root system and $K$ is a field.
Denote by $W=W(\Phi)$ the Weyl group corresponding to $\Phi$. Denote
by $\dot w$ a preimage of $w\in W$ in $G(\Phi,K)$. Twisted Chevalley
groups and Suzuki and Ree groups are denoted by $^dG(\Phi,K),$
$d=2,3$. We call Chevalley groups (twisted, untwisted, Suzuki and
Ree groups) the groups of Lie type. Chevalley groups $G(\Phi,K)$ are
denoted throughout the paper mostly as groups of type $\Phi(K)$.
Correspondingly, for finite fields $K=\mathbb F_{q}$, $q=p^n$, they
are denoted just by $\Phi(q).$ We adopt the notation of \cite{Ca2}
for twisted Chevalley groups which means that we use the symbols
$^2\Phi(q^2)$ but not $^2\Phi(q)$. For example, simple unitary
groups are denoted either as $^2A_n(q^2)$, or as  $PSU_n(q^2)$  (and
not by $PSU_n(q)$), or as $PSU_n(F)$, where $F$ is a quadratic
extension of $K$. We use the same notation for Suzuki and Ree groups
(this means that in these cases $q$ is not integer because $q^2$ is
an odd power of 2 or 3).

We use the standard notation $u_\alpha(t)$, $\alpha\in \Phi$, $t\in
K$, for elementary unipotent elements of $G$. Correspondingly, split
semisimple elements will be denoted by $h_\alpha(t),$ $t\in K^*$,
where $K^*$ is the multiplicative group of $K$.


We say that a finite group $G$ is almost simple if it has a unique
normal simple subgroup $L$ such that $L\leq G\leq \Aut (L)$.  In the
classification of automorphisms we follow \cite[p.~60]{GLS},
\cite[p.~78]{GL}. This means that all automorphisms of an adjoint
 group of Lie type are subdivided to inner-diagonal automorphisms,
field automorphisms, graph automorphisms, and graph-field ones (for
non-adjoint groups see \cite[p.~79]{GL}). Recall that according to
\cite[Definition~2.5.13]{GLS}, any field automorphism of prime order
$\ell>3$ of $L$ is conjugate in $\Aut (L)$ to a standard one in the
sense of \cite{St1}.

We use the formula $[x,y]=xyx\min y\min $ to denote the commutator.
If $H$ is a subgroup of $G$, we denote by $H^a$, $a\in G$, the
subgroup $aHa^{-1}$. For the group of fixed points of an
automorphism $a$ of a group $H$ we use the centralizer notation
$C_H(a)$ (both for inner and outer automorphisms of $H$). The only
exception is the symbol $\mathbb{G}^F$, which is traditionally used
for denoting the group of fixed points of a simple algebraic group
with respect to a Frobenius endomorphism (see \cite{Ca2}).

We use below some standard language of algebraic groups ([Sp],
[Hu]). Here we consider only algebraic groups defined over a finite
field $\mathbb F_q$ and therefore sometimes we identify such groups
with the groups of points over the algebraic closure
$\overline{\mathbb F}_q$. By a Chevalley group we mean here the
group of points of a reductive algebraic group which is defined and
quasisplit over $K$. Note that all groups are quasisplit over finite
fields.


\section{Strategy of proof} \label{sec:strat}

As in \cite{GGKP1}--\cite{GGKP3}, we reduce Theorem
\ref{th:bigorder} to the following statement:

\begin{theorem} \label{th:radel}
Let $G$ be a finite almost simple group, and let $g\in G$ be of
prime order $> 3$. Then there is $x\in G$ such that the subgroup
generated by $g$ and $ xgx\min$ is not solvable.
\end{theorem}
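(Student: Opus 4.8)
The plan is to prove Theorem~\ref{th:radel} by reducing to the case where $L$ is a simple group of Lie type and then exhibiting, for a given element $g$ of prime order $\ell>3$, a conjugate $xgx^{-1}$ such that $\langle g,xgx^{-1}\rangle$ surjects onto a known non-solvable group. The overall structure follows the standard CFSG dichotomy: one treats in turn the alternating groups, the sporadic groups, and the groups of Lie type, the last being the substantial case. For the alternating groups $A_n$ one can take $g$ to be any element of order $\ell>3$ and choose $x$ so that $g$ and its conjugate together act on enough points to generate a group containing an alternating subgroup of degree $\geq 5$; the sporadic groups and their automorphism groups are a finite check (using known subgroup structure or character-theoretic data). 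The element $g\in G$ with $L\le G\le\Aut(L)$ is of prime order $\ell>3$, so by the cited classification of automorphisms (\cite[Definition~2.5.13]{GLS}) it is either inner-diagonal, or conjugate to a standard field automorphism, or a graph or graph-field automorphism; these cases must each be handled.

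The heart of the argument is the groups of Lie type with $g$ inner-diagonal, so I would organize it by the Jordan decomposition $g=g_s g_u$. If $g$ is unipotent (so $\ell=p$, the defining characteristic), then $g$ lies in a root subgroup or a more general unipotent class, and the plan is to pick $x$ so that $g$ and $xgx^{-1}$ lie in opposite root subgroups $U_\alpha$ and $U_{-\alpha}$ for a suitable root $\alpha$; then $\langle u_\alpha(t),u_{-\alpha}(s)\rangle$ generates a copy of (a central quotient of) $\SL_2$ or $\mathrm{PSL}_2$, which is non-solvable once $\ell=p\ge5$, and the border cases of small rank or small field must be inspected separately. If $g$ is semisimple, then $g$ lies in a maximal torus $T$, and I would use the Bruhat decomposition: choosing $x=\dot w$ a representative of a Weyl group element $w$ moving $T$ generically, the subgroup $\langle g, g^{\dot w}\rangle$ can be forced to contain a rank-one or rank-two semisimple subgroup of $G$ on which $g$ acts without being central, producing a non-solvable section. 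The mixed case $g=g_sg_u$ with both parts nontrivial is typically reduced to the centralizer $C_G(g_s)$, a reductive group of smaller rank, where $g_u$ is a nontrivial unipotent element and one applies the unipotent analysis inductively.

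For the remaining types of automorphisms, the field automorphism case uses the standard description: a standard field automorphism $\sigma$ of order $\ell$ has fixed-point subgroup $L_\sigma=L(\F_{q^{1/\ell}})$, a smaller group of the same Lie type, and I would choose $x\in L$ so that $\langle\sigma,x\sigma x^{-1}\rangle$ projects onto a non-solvable subgroup of $L/L_\sigma$-type data --- concretely, $\sigma$ and a twist $x\sigma x^{-1}=\sigma\cdot(\sigma^{-1}x\sigma)x^{-1}$ generate a group whose image in $\mathrm{Out}$ is cyclic but which contains nontrivial elements of $L$, and one leverages the structure of $C_L(\sigma)$ to find an $\SL_2$-section. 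Graph and graph-field automorphisms are handled by similar explicit constructions, reducing to the inner case inside the relevant fixed-point subgroup.

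The main obstacle I expect is uniformity across all Lie types and the control of small-rank and small-field exceptions: the clean ``opposite root subgroup'' or ``Weyl twist'' construction produces $\SL_2$ or $\mathrm{PSL}_2$ sections that are non-solvable precisely because $\ell>3$ forces $p\ge5$ or forces the torus element to have large enough order, but for small fields ($q\in\{2,3,4\}$) and for groups of small rank (e.g.\ $\mathrm{PSL}_2$, $\mathrm{PSL}_3$, $^2B_2$, $^2G_2$, $^2F_4$) these sections can degenerate, and each such borderline case requires a separate, often computational, verification. Organizing these exceptional cases, and ensuring that the chosen conjugating element $x$ genuinely lies in $G$ (and not merely in an overgroup) while the generated subgroup is provably non-solvable, is where the bulk of the careful work lies.
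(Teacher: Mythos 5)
Your top-level skeleton (CFSG trichotomy, then classification of the automorphism induced by $g$) agrees with the paper, but two of your case distinctions are vacuous and, more importantly, the case that carries the whole weight of the theorem is missing its key idea. On the vacuous points: an element of prime order has no ``mixed'' Jordan decomposition (if $g=g_sg_u$ with both parts nontrivial, the order of $g$ is not prime), and by Proposition \ref{prop:aut_prime} an automorphism of prime order $\ell>3$ of a group of Lie type is never a graph or graph-field automorphism, so only the inner-diagonal and field cases occur. The genuine gap is in the semisimple inner-diagonal case. You propose to take $x=\dot w$ a Weyl representative so that $\langle g, g^{\dot w}\rangle$ ``can be forced to contain'' a non-solvable section, but you supply no mechanism for the forcing, and none exists at this level of generality: a regular semisimple $g$ generating a maximal torus whose normalizer is a maximal \emph{solvable} subgroup has many conjugates $xgx\min$ with $\langle g,xgx\min\rangle$ solvable, and the entire difficulty is to produce one $x$ escaping every solvable overgroup. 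The paper's route is quite different: a minimal-counterexample setup in which one first shows $g$ lies in no proper parabolic (Lemma \ref{lem:parab}), hence is regular semisimple and normalizes no unipotent subgroup; then $g$ is conjugated into the Coxeter cell $B\dot w_cB$ by Steinberg's theorem, the commutator result of \cite{GGKP2} yields $x$ with $[g,x]=u\in U$, and the assumed solvability of $H=\langle g,xgx\min\rangle$ is refuted by analyzing a Hall $\{p,\ell\}$-subgroup of $H$, its maximal abelian normal subgroup, and the $\ell$-torsion of $C_{\mathbb G(K)}(g)$, with a separate monomial-matrix argument in the residual cases $\mathfrak{G}'=\ASL_{\ell}$ and $\ASU_{\ell}(q)$. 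None of these ingredients appears in your sketch, and without them the semisimple case is not proved.

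Two further concrete holes. First, your unipotent plan assumes $g$ can be put into a single root subgroup so that an opposite pair generates $\SL_2$; a unipotent element of order $p=\ell>3$ need not lie in a root subgroup, and the paper instead projects $g$ to a rank-one quotient $P/Z(P)R_u(P)$ of a parabolic and invokes Theorem \ref{prop:rank1}. Second, the rank-one and ${}^2F_4$ cases are not mere ``borderline computational verifications'': for inner elements of Suzuki and Ree groups and for ${}^2F_4(q^2)$ the paper uses Gow's theorem to realize a generator of a prescribed maximal torus as $[g,x]$ and then runs through the known maximal subgroup lists, while for field automorphisms of Suzuki groups no explicit conjugate is exhibited at all --- the proof is a counting argument (Lemmas \ref{lem:sb} and \ref{lem:st}, plus a cohomological vanishing lemma) showing the solvable conjugates are too few. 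Your sketch of the field-automorphism case, which tries to build an $\SL_2$-section from $C_L(\sigma)$, would not cover the Suzuki groups, where $C_L(\sigma)$ is again a Suzuki group containing no such section.
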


Although this reduction is fairly standard, we sketch its main steps
below. Let $S(G)$ be the set of all elements  $g\in G$ of prime
order bigger than 3 such that for every $x\in G$ the subgroup
$\langle g, xgx\min \rangle$ is solvable.

Obviously, any element of $\RR(G)$ of prime order $> 3$ lies in
$S(G)$, and we have to prove the opposite inclusion. We may assume
that $G$ is semisimple (i.e., $\RR(G)=1$), and we shall prove that
$G$ does not contain elements from $S(G)$. Assume the
contrary and consider a minimal counterexample (i.e. a semisimple
group $G$ of smallest order with $S(G)\ne \emptyset$).

It is easy to see that any $g\in G$ acts as an automorphism (denoted
by the same letter $g$) on the CR-radical $V$ of $G$ (see
\cite[3.3.16]{Ro}) and that  $V=H_1\times \cdots\times H_n$ where
all $H_i,$ $1\leq i\leq n$, are isomorphic (say to $H$) nonabelian
simple groups (\cite[Section~2]{GGKP2}). Suppose that $g\neq 1$
belongs to $S(G)$. Let us show that $g$ cannot act on $V$ as a
non-identity element of the symmetric group $S_n$.

Since $g\in S(G)$, the subgroup $\Gamma=\langle g,  xgx\min\rangle$
is solvable for any $x\in G$. Take $x\in V$. Evidently, $\Gamma$
contains the elements $[g,x]= gxg\min x\min=g(x)x\min $ and
$g^2(x)x\min .$ Denote by $\sigma$ the element of $S_n$
corresponding to $g$.

Suppose $\sigma\neq 1$.  Since the order of $\sigma$ is greater than
$3$, we may assume that there exist $i$ and  $j$ such that
$\sigma(j)=i$ and $\sigma(i)=1$. Take $ x=(x_1,\ldots,x_n) \in
V$ such that $x_j=b$, $x_i=a$, where $a$ and $b$ are generators of
the simple group $H$ and $x_k=1$ for $k\neq i,j$, $1\leq k\leq n$.
Then the group $\langle g(x )x\min, g^2(x )x\min\rangle$ is not
solvable since $(g(x )x\min)_1=a$ and $(g^2(x )x\min)_1=b$ and these
elements generate a simple group $H$. Contradiction with the
assumption that $\Gamma$ is solvable.

So we may assume that an element $g\in S(G)$ acts as an
automorphism $\tilde g$ of the simple group $H$. Then we consider
the extension of $H$ by $\tilde g$. Denote this almost simple group
by $G_1$. By Theorem \ref{th:radel}, $G_1$ contains no
elements from $S(G)$. Contradiction with the choice of $\tilde g$.


So the rest of the paper is devoted to the proof of Theorem
\ref{th:radel}. We refer to the property stated in Theorem
\ref{th:radel} as Property {\bf{(NS)}} (for ``non-solvable"):

\medskip

{\bf{(NS)} For every $g\in G$ of prime order $> 3$ there is $x\in G$
such that the subgroup generated by $g$ and $x gx\min$ is not
solvable.}

\medskip

We use CFSG to prove, by case-by-case analysis, that every almost
simple group satisfies {\bf{(NS)}}. Section \ref{sec:spor} deals
with alternating and sporadic groups. In Section \ref{sec:rank1} we
consider groups of Lie type of rank 1. 
In Section \ref{sec:gen} the general case is treated. Finally, the
exceptional case ${}^2F_4$ is treated separately in Section
\ref{sec:F4}.

\section{Alternating, symmetric, and sporadic groups}\label{sec:spor}

Let $G$ be an almost simple group, $L\leq G\leq \Aut L$.

\begin{lemma} \label{lm:sym}
Let $L=A_n$, $n\ge 5$, be the alternating group on $n$ letters. Then
$G$ satisfies $\bf{(NS)}$.
\end{lemma}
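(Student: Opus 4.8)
The plan is to verify Property \textbf{(NS)} directly, by an explicit permutation construction, after first reducing to the case $g\in L=A_n$. Since $g$ has prime order $\ell>3$ and $\ell$ is coprime to $|\operatorname{Out}(A_n)|$ (which is $2$ for $n\neq 6$ and $4$ for $n=6$), the image of $g$ in $G/L\hookrightarrow\operatorname{Out}(A_n)$ has order dividing $\gcd(\ell,|\operatorname{Out}(A_n)|)=1$, so $g\in A_n$; being of odd order, $g$ is automatically an even permutation. Hence $g$ is a product of $t\geq 1$ disjoint $\ell$-cycles, $g=c_1c_2\cdots c_t$, moving $t\ell$ points and fixing the remaining $n-t\ell$, and we may normalize $c_1=(1\,2\,\cdots\,\ell)$.

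The heart of the argument is to perturb a single cycle by a $3$-cycle. Because $\ell\geq 5$, the element $c_1'=(1\,2\,3)\,c_1$ is again a single $\ell$-cycle (explicitly $c_1'=(1\,3\,4\,\cdots\,\ell\,2)$), so $h=c_1'c_2\cdots c_t$ has the same cycle type as $g$ and is therefore conjugate to it. Since the factors $c_2,\dots,c_t$ have disjoint support from $c_1,c_1'$, they cancel and $g\,h\min=c_1(c_1')\min=(1\,3\,2)$. Thus $\langle g,h\rangle$ contains the $3$-cycle $(1\,3\,2)$, whose support lies in $\{1,\dots,\ell\}$, and on $\{1,\dots,\ell\}$ the element $g$ acts as the cycle $c_1$ (the other $c_i$ fixing these points). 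Consequently the conjugates $g^{k}(1\,3\,2)g^{-k}=(1{+}k,\,3{+}k,\,2{+}k)$ (indices read modulo $\ell$) all lie in $\langle g,h\rangle$; these are the inverses of the adjacent $3$-cycles $(a,\,a{+}1,\,a{+}2)$ on $\{1,\dots,\ell\}$, which generate $A_\ell$. Hence $\langle g,h\rangle\supseteq A_\ell$ is non-solvable, and this settles \textbf{(NS)} whenever the conjugating element $x$ with $h=xgx\min$ can be chosen inside $G$.

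That last requirement is the only delicate point, and I expect it to be the main obstacle. The membership $x\in G$ is automatic when $S_n\leq G$, and when $G=A_n$ it is automatic unless the $S_n$-class of $g$ splits in $A_n$, i.e.\ unless all cycle lengths of $g$ are distinct and odd; this forces $t=1$ together with at most one fixed point, so $n\in\{\ell,\ell+1\}$. (Indeed one checks that for $n=\ell$ the conjugator carrying $c_1$ to $c_1'$ is the $(\ell{-}1)$-cycle $(2\,3\,\cdots\,\ell)$, which is odd, so the naive $h$ above lands in the wrong $A_\ell$-class.) For these two cases I would instead take $h=g^{x}$ with the \emph{even} conjugator $x=(2\,3)(4\,5)\in A_n$, for which $g\,h\min=[g,x]$ is a single $5$-cycle; since $x$ fixes any point beyond the first $\ell$, the group $\langle g,h\rangle$ acts on $\{1,\dots,\ell\}$ as a transitive, hence primitive, group of prime degree $\ell$.

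To conclude non-solvability in these residual cases, I would invoke Burnside's classical theorem: a solvable primitive permutation group of prime degree $\ell$ is contained in $AGL_1(\ell)$, in which every nontrivial element fixes at most one point. For $\ell\geq 7$ the $5$-cycle $g\,h\min$ fixes $\ell-5\geq 2$ points, which is incompatible with solvability; the remaining case $\ell=5$ is checked by hand, where $g$ and $g\,h\min$ are two $5$-cycles with $g\,h\min\notin\langle g\rangle$, so $\langle g,h\rangle$ has more than one Sylow $5$-subgroup and must equal $A_5$. In every case $\langle g,xgx\min\rangle$ is non-solvable, establishing \textbf{(NS)} for $G$. I note that the tools used here (Jordan's and Burnside's theorems on primitive permutation groups) are classical and do not require CFSG, in accordance with the remark that the alternating case and Corollary \ref{corol:solv} can be obtained classification-free.
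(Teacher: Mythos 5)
Your proof is correct, and it shares the paper's central idea---perturbing one $\ell$-cycle of $g$ by a $3$-cycle so that the subgroup generated by the two conjugates traps a $3$-cycle and hence a copy of $A_\ell$---but the execution is genuinely different. The paper conjugates $g$ directly by the even element $z=(1\,2\,3)$ (or by a product of $k$ such $3$-cycles when $g$ is a product of $k$ disjoint $\ell$-cycles), so the conjugator lies in $A_n\leq G$ by construction and no conjugacy-class analysis is ever needed; fixed points of $g$ are disposed of beforehand by induction on $n$, since a $g$ fixing a letter lies in a copy of $A_{n-1}$. You instead build the target element $h=(1\,2\,3)c_1\cdot c_2\cdots c_t$ first and only afterwards ask whether $h$ is $G$-conjugate to $g$, which forces you to confront the splitting of the $S_n$-class of $g$ in $A_n$; you correctly isolate the exceptional cases $t=1$, $n\in\{\ell,\ell+1\}$ and resolve them with a second construction ($x=(2\,3)(4\,5)$, producing a $5$-cycle commutator) plus the Galois--Burnside theorem that a solvable transitive group of prime degree $\ell$ embeds in $AGL_1(\ell)$, where nontrivial elements fix at most one point. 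What your route buys is a completely explicit, induction-free argument that carries the fixed points and the extra cycles along harmlessly and treats the $A_n$-conjugacy question honestly (a point the paper's one-line treatment leaves implicit); what it costs is the extra machinery for the split classes, which the paper's choice of conjugating by an explicitly even permutation avoids entirely. Both arguments are elementary and classification-free, as you note.
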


\begin{proof}
Clearly it is enough to consider the alternating groups: as $\Aut
(A_n)=S_n$ for $n\ne 6$ and $[\Aut (A_6):A_6]=4$, any element of odd
order in $\Aut (A_n)$ lies in $A_n$. So let $G=A_n$, $n\ge 5$. For
$n=5$ the proof is straightforward, so we may proceed by induction.
We may thus assume that $g$ acts without fixed points, so $n=k\ell$,
where $\ell$ stands for the order of $g$, and $g$ is a product of
$k$ disjoint cycles of length $\ell$. If $k=1$, we can conjugate
$g=(12\dots \ell)$ by a 3-cycle $z=(123)$ to see that $\left<g,z
gz\min\right> =A_{\ell}$. For $k>1$, we conjugate $g$ by a product of
$k$ 3-cycles.
\end{proof}

\begin{lemma} \label{th:sym}
Let $L$  be a sporadic simple group. Then $G$ satisfies $\bf{(NS)}$.
\end{lemma}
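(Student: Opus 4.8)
The plan is to reduce to the case $G=L$ and then, for each relevant conjugacy class, to exhibit a pair of conjugates generating a non-solvable subgroup. First I would observe that for every sporadic simple group $L$ the outer automorphism group $\operatorname{Out}(L)$ has order at most $2$, a fact read directly off the ATLAS. Since $L$ is simple with trivial centre we have $\operatorname{Inn}(L)\cong L$, so $G/L$ embeds into $\operatorname{Out}(L)$; as the order $\ell$ of $g$ is an odd prime $>3$, the image of $g$ in $G/L\hookrightarrow\operatorname{Out}(L)$ has order dividing both $\ell$ and $2$ and is therefore trivial. Hence $g\in L$. This is exactly the reasoning used for the alternating groups in Lemma \ref{lm:sym}, and it lets me assume $G=L$ throughout.

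With $g\in L$ of prime order $\ell>3$, I would show that it suffices to find a single conjugate $g^x=xgx\min$ with $\langle g,g^x\rangle$ non-solvable. Because $L$ is a non-abelian simple group, the cleanest way to guarantee this is to produce $x$ with $\langle g,g^x\rangle=L$: the whole group is then generated, and it is non-solvable. So (NS) for $G=L$ reduces to the assertion that every element of prime order $>3$ lies in a generating pair of two $L$-conjugate elements.

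The verification is then a finite problem: there are $26$ sporadic groups, and in each only finitely many classes of elements of prime order $>3$ (the ATLAS classes $5A,5B,7A,11A,\dots$). For each such class I would exhibit a conjugate $g^x$ with $\langle g,g^x\rangle$ non-solvable. The most transparent route is a direct search with the GAP character-table library and standard generators: pick representatives and test the subgroup they generate. A conceptually cleaner route uses character theory: for the class $C=g^L$ one counts, via the Frobenius class-multiplication formula, the pairs $(g,g^x)$ whose product falls in a fixed class, and subtracts the contributions coming from the (known) maximal subgroups of $L$; a positive count forces a pair generating $L$, which is non-solvable. In many classes it is even enough to point to a single non-solvable subgroup $M\le L$ — a copy of $A_5$ when $\ell=5$, or more generally of some $\mathrm{PSL}_2(q)$ with $\ell\mid|M|$ — that contains two $L$-conjugates of $g$ generating it. Alternatively, one may simply invoke known results on generation of finite simple groups by pairs of conjugate elements, in the spirit of the Guralnick--Kantor theorem.

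The main obstacle is computational rather than conceptual: it lies in carrying out the case analysis uniformly across all primes $\ell>3$ dividing $|L|$ and across the largest groups (the Fischer groups, the Baby Monster, and the Monster), where explicit element arithmetic is expensive and one must rely on character-theoretic data and the catalogued maximal subgroups instead of direct enumeration. Once a suitable non-solvable pair of conjugates has been located in each class, property (NS) follows at once.
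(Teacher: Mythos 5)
Your proposal is correct and follows essentially the same route as the paper: reduce to $G=L$ via $|\operatorname{Out}(L)|\le 2$, then dispose of the finitely many classes of elements of prime order $>3$ case by case. The paper simply cites \cite[Prop.~9.1]{GGKP1} for that case analysis, organized by the dichotomy that $g$ either lies in a smaller simple subgroup or has $N_G(\langle g\rangle)$ maximal (so that conjugating by any $x\notin N_G(\langle g\rangle)$ yields $\langle g, xgx^{-1}\rangle=G$); your computational and character-theoretic alternatives are just different ways of executing the same finite verification.
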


\begin{proof}  As the group of outer automorphisms of any sporadic
group is of order at most 2, it is enough to treat the case where
$G$ is a simple sporadic group. Here the proof goes, word for word,
as in \cite[Prop.~9.1]{GGKP1}. Namely, case-by-case analysis shows
that any element $g\in G$ of prime order $\ell >3$ is either
contained in a smaller simple subgroup of $G$, or its normalizer is
a maximal subgroup of $G$. In the latter case it is enough to
conjugate $g$ by an element $x$ not belonging to
$N_G(\left<g\right>)$ to ensure that $\left<g, x gx\min\right>=G$.
\end{proof}


\section{Groups of Lie rank 1}\label{sec:rank1}

In this case our proof combines arguments of several different
types. In the cases $L=PSL_2(q)$ and $L=PSU_3(q)$ we use the
analysis of \cite{GS} with appropriate modifications whenever
needed. The case of inner automorphisms of Suzuki and Ree groups is
treated in the same spirit as in \cite{GGKP1} (see Proposition
\ref{lem-Suzuki-Ree}). The case of field automorphisms of Ree groups
can be reduced to the $PSL_2$-case. Finally, in the case of field
automorphisms of Suzuki groups we apply a counting argument similar
to \cite{GS}.


Before starting the proof, let us make some preparations. The
following fact is well known.

\begin{prop}\label{prop:aut_prime}
Let $G$ be a finite almost simple group of Lie type, and let $g\in
G$ be an element of prime order $\ell>3$. Then $g$ is either an
inner-diagonal or a field automorphism of $L$.
\end{prop}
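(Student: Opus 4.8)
The plan is to read the statement straight off the structure of the automorphism group $\Aut(L)$ of a finite simple group of Lie type, as classified in \cite{GLS}, \cite{GL} and recalled in the notational conventions above. By \cite[Theorem~2.5.12]{GLS} the outer automorphism group decomposes as
$\Aut(L)/\operatorname{Inndiag}(L)\cong \Phi_L\times\Gamma_L$,
where $\Phi_L$ is the cyclic group of field automorphisms and $\Gamma_L$ is the group of graph automorphisms. Since $\Phi_L$ is a direct factor of this quotient, its preimage $\operatorname{Inndiag}(L)\cdot\Phi_L$ is normal in $\Aut(L)$, so there is a well-defined projection $\rho\colon\Aut(L)\to\Gamma_L$ whose kernel consists precisely of the inner-diagonal and field automorphisms and their products.

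The key observation I would use is that $\Gamma_L$ is always (isomorphic to a subgroup of) the symmetry group of the Dynkin diagram, and the only diagrams with nontrivial symmetry are those of type $A_n$, $D_n$, $E_6$ (symmetry $\mathbb{Z}/2$) and $D_4$ (symmetry $S_3$). Hence in every case $\Gamma_L$ embeds into $S_3$, so $|\Gamma_L|$ divides $6$ and $\Gamma_L$ has exponent dividing $6$. First I would apply $\rho$ to $g$: the image $\rho(g)$ has order dividing $\ell$, while every element of $\Gamma_L$ has order dividing $6$. Because $\ell>3$ is prime we have $\gcd(\ell,6)=1$, and this forces $\rho(g)=1$.

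It then follows that $g\in\operatorname{Inndiag}(L)\cdot\Phi_L$; equivalently, the coset of $g$ modulo the inner-diagonal automorphisms lies in the field part $\Phi_L$ and has trivial graph component, so $g$ is neither a graph nor a graph-field automorphism. If the field component of $g$ is trivial, then $g\in\operatorname{Inndiag}(L)$ and $g$ is inner-diagonal; otherwise the coset of $g$ is a nontrivial field-automorphism coset, and $g$ is a field automorphism in the sense of \cite[Definition~2.5.13]{GLS}. Either way the proposition follows.

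The main thing to be careful about is the bookkeeping for the twisted groups ${}^d G(\Phi,K)$, where part of the diagram symmetry has already been absorbed into the twisting and the field and graph contributions to $\Aut(L)$ can be entangled (this is exactly what produces graph-field automorphisms). I would go through the twisted types ${}^2A_n$, ${}^2D_n$, ${}^3D_4$, ${}^2E_6$, ${}^2B_2$, ${}^2G_2$, ${}^2F_4$ in \cite{GLS} to confirm that the residual graph group $\Gamma_L$ still embeds into $S_3$, so that the coprimality argument applies uniformly. This verification, rather than any single computation, is the only real obstacle, and it is supplied entirely by the cited structure theorem; this is why the paper flags the statement as well known.
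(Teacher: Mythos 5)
Your argument is correct in substance, but it is worth knowing that the paper does not actually prove this proposition: its entire ``proof'' is the citation ``See \cite[p.~82, 7-3]{GL} and \cite[Proposition 1.1]{LLS}.'' What you have written out is essentially the standard argument that sits behind those references, so the comparison is really between a citation and a self-contained sketch. Your route -- project $g$ into $\Aut(L)/\operatorname{Inndiag}(L)$, observe that the graph part has order dividing $6$, and use $\gcd(\ell,6)=1$ to kill it -- is the right one, and it buys a transparent, one-line reason for the hypothesis $\ell>3$ that the paper leaves implicit.

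Two points deserve more care than your sketch gives them. First, the clean direct-product decomposition $\Aut(L)/\operatorname{Inndiag}(L)\cong\Phi_L\times\Gamma_L$ fails for $B_2(2^n)$, $G_2(3^n)$ and $F_4(2^n)$: there the extraordinary graph automorphism $\gamma$ satisfies $\gamma^2=\phi$ (the generating field automorphism), so $\Phi_L\Gamma_L$ is a single cyclic group of order $2n$ rather than a product with a diagram-symmetry factor, and your projection $\rho$ is not literally defined. The conclusion survives because an element of odd prime order in a cyclic group of order $2n$ lies in the index-two subgroup $\langle\gamma^2\rangle=\Phi_L$, but this case needs to be named; it is not covered by the list $A_n,D_n,E_6,D_4$ of diagrams with symmetry, nor by your worry about the twisted types (for which $\Gamma_L$ is in fact trivial). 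Second, concluding that $g$ ``is a field automorphism'' from the fact that its image lies in $\Phi_L\setminus\{1\}$ requires matching \cite[Definition~2.5.13]{GLS}, i.e.\ one should choose $\phi\in\Phi_L$ of order exactly $\ell$ in the coset of $g$ and invoke the conjugacy statement (GLS, Proposition~4.9.1) that an element of prime order in $\operatorname{Inndiag}(L)\phi$ is conjugate to a standard field automorphism -- exactly the fact the paper records separately in its notational conventions. With those two repairs your proof is complete and arguably more informative than the paper's.
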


\begin{proof} See \cite[p.~82, 7-3]{GL} and \cite[Proposition 1.1]{LLS}.
\end{proof}

\begin{prop}\label{prop:small}
Let $L$ be one of the following groups: $^2A_2(9)$, $^2G_2(3)$,
$A_2(2)$, $A_2(3)$, $B_2(2)$, $B_2(3)$, $G_2(2)$, $G_2(3)$,
$^2A_3(9)$, $^2A_4(9)$,  $^3D_4(8)$, $^3D_4(27)$, $^2F_4(2)$. Then
$G$ satisfies $\bf{(NS)}$.
\end{prop}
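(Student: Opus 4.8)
The plan is to verify Property \textbf{(NS)} directly for each of the finitely many groups on the list, since these are precisely the small exceptional cases for which the uniform arguments of Sections \ref{sec:rank1} and beyond break down. For a fixed $L$ one runs over all $G$ with $L\le G\le \Aut(L)$; by Proposition \ref{prop:aut_prime} the only relevant $g$ are inner-diagonal or field automorphisms of prime order $\ell>3$, and for groups this small only a handful of primes occur (typically $\ell\in\{5,7,13\}$), so there are very few conjugacy classes to inspect. For each class representative $g$ it suffices to exhibit a \emph{single} conjugate $xgx\min$ with $\left<g,xgx\min\right>$ non-solvable.

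The main tool is the maximal-subgroup structure recorded in the \textsc{Atlas}, used exactly as in Lemma \ref{th:sym}. If the normalizer $N_G(\left<g\right>)$ is maximal in $G$, then choosing any $x\notin N_G(\left<g\right>)$ forces $\left<g,xgx\min\right>=G$, which is non-solvable. When $N_G(\left<g\right>)$ is not maximal, one locates a non-solvable overgroup $M$ of $\left<g\right>$ whose structure is already understood, for instance a smaller classical or alternating subgroup, and applies the same normalizer argument inside $M$, or reduces to a case already settled in Section \ref{sec:spor}. Concretely, one reads off from the conjugacy-class data of each $G$ the set of prime orders $>3$ together with a non-solvable maximal overgroup of each such cyclic subgroup; the resulting verification is finite and routine, and where a hand argument is awkward it is confirmed by a short computation in \textsc{Gap} or \textsc{Magma}.

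The main obstacle is the lack of uniformity: no single argument covers the whole list, and the various forms $L\le G\le\Aut(L)$ must each be treated, including the graph, field, and graph-field automorphisms arising in the twisted cases $^3D_4(8)$, $^3D_4(27)$, and $^2F_4(2)$. The genuinely delicate point is to rule out, for every $g$ of prime order $>3$, the possibility that \emph{all} conjugates $xgx\min$ generate with $g$ only a solvable subgroup; this is exactly what the maximal-normalizer trick excludes, provided one can always produce a non-solvable overgroup of $\left<g\right>$. That such an overgroup exists in each of these cases is guaranteed by the explicit maximal-subgroup lists, so the verification goes through group by group.
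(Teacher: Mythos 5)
Your proposal is correct and matches the paper's approach: the paper's proof of this proposition is simply a finite, computer-assisted verification (citing Table 1 of \cite{GGKP2} for the inner classes and ``straightforward MAGMA computations with outer automorphisms of $L$''), which is exactly the exhaustive case-by-case check you describe, with your maximal-overgroup/normalizer observations serving only as organizational scaffolding for the same computation. The one point worth noting is the paper's remark that for the non-simple entries on the list (e.g.\ $G_2(2)$, $B_2(2)$, ${}^2G_2(3)$, ${}^2F_4(2)$) the computation is carried out for the simple derived subgroup $L'$.
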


\begin{proof}
We use \cite[Table 1]{GGKP2} and straightforward MAGMA computations
with outer automorphisms of $L$.
\end{proof}

\begin{remark}
If a group $L$ from the above list is not simple, the computations
have been made for its derived subgroup $L'$ which is simple.
\end{remark}

So from now on we can exclude the groups listed in Proposition
\ref{prop:small} from the further considerations.

Recall now, for the reader's convenience, a theorem of Gow
\cite{Gow} which is essential in our argument.

{\it Let $L$ be a finite simple group of Lie type, and let $z\neq 1$
be a semisimple element in $L$. Let $C$ be a conjugacy class of $L$
consisting of regular semisimple elements. Then there exist $g\in C$
and $x\in L$ such that $z=[g,x]$.}

\begin{theorem} \label{prop:rank1}
Suppose that the Lie rank of $L$ is 1. Then $G$ satisfies
$\bf{(NS)}$.
\end{theorem}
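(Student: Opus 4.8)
The plan is to run through the four families of rank-$1$ simple groups---$PSL_2(q)$, the unitary groups $PSU_3(q^2)={}^2A_2(q^2)$, the Suzuki groups ${}^2B_2(q^2)$, and the Ree groups ${}^2G_2(q^2)$---and for each to verify {\bf (NS)} for every admissible $g$. By Proposition~\ref{prop:aut_prime} the element $g$ of prime order $\ell>3$ is either inner-diagonal or a field automorphism, and by Proposition~\ref{prop:small} we may discard the finitely many small fields listed there. The computation underlying the whole argument is the elementary identity
$$g^x:=xgx\min,\qquad [g,x]=gxg\min x\min=g\,(g^x)\min,$$
which gives $\langle g,g^x\rangle=\langle g,[g,x]\rangle$; indeed $[g,x]=g\,(g^x)\min$ and $g^x=[g,x]\min g$, so each of the two generating sets lies in the subgroup generated by the other. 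Thus, to establish {\bf (NS)} it suffices, for a well-chosen commutator value $z=[g,x]$, to force $\langle g,z\rangle$ to be non-solvable. Moreover {\bf (NS)} depends only on the $G$-conjugacy class of $g$, so we are free to replace $g$ by any conjugate.

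For the inner-diagonal semisimple case this is where Gow's theorem enters. In $PSL_2(q)$ every semisimple element $\neq 1$ is regular, lying in a unique maximal torus, and by Dickson's classification the solvable subgroups are contained in Borel subgroups, in dihedral torus-normalizers, or in copies of $A_4$ and $S_4$. Assume first that $\ell\mid q-1$, so $g$ is split semisimple, and choose a \emph{non-split} semisimple $z\neq 1$. By Gow's theorem there is a conjugate $g_0$ of $g$ and an $x\in L$ with $z=[g_0,x]$, whence $\langle g_0,g_0^x\rangle=\langle g_0,z\rangle$. This group contains regular semisimple elements of two non-conjugate tori, so it lies in no single Borel or dihedral subgroup; as $\ell>3$ it cannot lie in $A_4$ or $S_4$, whose element orders are at most $4$. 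Hence $\langle g_0,z\rangle$ is non-solvable, and by class-invariance so is the corresponding subgroup for $g$. The case $\ell\mid q+1$ is symmetric (interchanging split and non-split), and the unipotent case $\ell=p>3$, to which Gow does not apply, is handled by a direct choice of $z$ making $\langle g,z\rangle$ contain both a unipotent element and a regular semisimple one. The unitary groups $PSU_3(q^2)$ are treated in the same spirit, following \cite{GS}: the torus structure and the list of solvable subgroups are richer, but the Gow mechanism again yields a pair generating a subgroup meeting two distinct maximal tori, hence non-solvable. The inner case for the Suzuki and Ree groups is recorded separately in Proposition~\ref{lem-Suzuki-Ree}, proved as in \cite{GGKP1}.

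It remains to treat field automorphisms. If $g$ is a field automorphism of order $\ell$ then $q=q_0^{\ell}$ and $C_L(g)$ is a group of the same type over $\mathbb F_{q_0}$; the small subfields having been excluded, $C_L(g)$ is non-solvable. For the Ree groups the fixed-point and centralizer structure lets one exhibit a $PSL_2$-subgroup on which $g$ acts as a field automorphism, reducing this case to the $PSL_2$ analysis above. For the Suzuki groups no such reduction is available, and one argues instead by counting: one bounds the number of conjugates $g^x$ for which $\langle g,g^x\rangle$ could be solvable (equivalently, contained in one of the known solvable overgroups) and shows this number is smaller than $|G|$, so that a non-solvable pair must exist.

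The main obstacle, I expect, is precisely this field-automorphism analysis---especially the Suzuki counting, where the absence of a Gow-type commutator tool forces a quantitative comparison of conjugacy-class sizes against the orders of the maximal solvable subgroups. A secondary difficulty is the inner unitary case $PSU_3(q^2)$, whose longer list of solvable subgroups must be ruled out one by one; the genuinely exceptional small fields, where neither the generation nor the counting estimates are valid, are exactly those isolated in Proposition~\ref{prop:small} and disposed of by direct computation.
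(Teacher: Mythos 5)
Your decomposition matches the paper's exactly: $PSL_2$ and $PSU_3$ via the Guralnick--Saxl analysis, inner elements of Suzuki and Ree groups via Gow's theorem applied to the identity $\langle g, xgx\min\rangle=\langle g,[g,x]\rangle$ together with the lists of maximal subgroups, field automorphisms of Ree groups by restriction to a normalized-but-not-centralized $PSL_2$-subgroup, and field automorphisms of Suzuki groups by counting the $x$ for which $\langle g,xgx\min\rangle$ can land in a $g$-stable maximal subgroup and comparing with $|L|$. The one caveat is that the Suzuki counting step, which you correctly identify as the main obstacle, is only announced here, whereas the paper must (and does) work to make it precise --- reducing normalizers of tori to the tori themselves, excluding $Sz(2)$, and proving $H^1(\langle g\rangle,\mathfrak T)=1$ to get the bound on $|S_{\mathfrak T}|$ --- but as a plan it is the same proof.
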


\noindent{\it Proof}. Let $g\in G$ be of prime order $> 3$. We shall
check that there is $x\in L$  such that the subgroup of $G$
generated by $g$ and $x gx\min$ is not solvable.

\begin{prop} \label{lem-PSL-PSU}
If $L=PSL_2(q)$, $q\ge 4$, or $L=PSU_3(q^2)$, $q>2$, then $G$
satisfies $\bf{(NS)}$.
\end{prop}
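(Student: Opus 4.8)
### Proof Strategy for Proposition~\ref{lem-PSL-PSU}

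The plan is to handle the two families separately, but in both cases to exploit the explicit structure of the low-rank groups together with the theorem of Gow quoted above. By Proposition~\ref{prop:aut_prime}, the element $g$ of prime order $\ell>3$ is either inner-diagonal or a field automorphism of $L$, so I would split the argument along this dichotomy. For the inner-diagonal case, I would further distinguish whether $g$ is semisimple or unipotent (using $\ell>3$ and Proposition~\ref{prop:small} to discard the small characteristic exceptions), since the two types require genuinely different constructions of the conjugating element~$x$.

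First I would treat $L=PSL_2(q)$. Here the maximal subgroups are classified (Dickson's theorem), and the key observation is that a prime-order element $g$ with $\ell>3$ normalizes a small, well-understood subgroup. If $g$ is semisimple of order $\ell$ dividing $q\pm 1$, it lies in a (split or non-split) torus whose normalizer is dihedral; conjugating $g$ by a suitable $x\in L$ I can arrange that $g$ and $xgx^{-1}$ generate a subgroup that is not contained in any single Borel or torus normalizer, and since the only non-solvable subgroups available are conjugates of $PSL_2$ of smaller order or the whole group, a counting/order argument forces $\langle g,xgx^{-1}\rangle$ to be non-solvable. If $g$ is unipotent (so $\ell=p=\ch K$), I would conjugate by an element taking the long root subgroup to an opposing one, so that $g$ and $xgx^{-1}$ generate a subgroup containing opposite unipotent elements, which by the standard $SL_2$ generation lemma yields a copy of $(P)SL_2$, hence a non-solvable group. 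This is where I would import the analysis of \cite{GS}, adapting it to cover the outer-diagonal cosets and making sure $q\ge 4$ eliminates the solvable small cases.

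Next I would treat $L=PSU_3(q^2)$, $q>2$, which is structurally closer to a rank-$1$ group but with a more intricate subgroup lattice (unitary tori, the unipotent radical of class $2$, and the subfield/$PSL_2$ subgroups). The strategy is parallel: for semisimple $g$, I would apply Gow's theorem to realize a nontrivial semisimple element as a commutator $[g,x]$ from the appropriate regular semisimple class, thereby planting inside $\langle g, xgx^{-1}\rangle$ an element whose centralizer structure precludes solvability; for unipotent $g$ (order $\ell=p$), I would again force two ``opposite'' root elements into the generated subgroup. Throughout, the field-automorphism case can be reduced to the preceding analysis: a field automorphism of prime order $\ell>3$ has fixed-point subgroup a smaller group of the same Lie type over a subfield, and I would choose $x$ inside $L$ so that the relevant commutators land in—and generate a non-solvable part of—this subfield subgroup.

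The main obstacle I expect is the uniform verification that the constructed $\langle g, xgx^{-1}\rangle$ is genuinely non-solvable rather than merely large: one must rule out the possibility that both generators, though not conjugate into a common obvious solvable subgroup, nonetheless lie in one of the sporadic solvable maximal subgroups ($A_4$, $S_4$, $A_5$ being non-solvable but the small dihedral/Frobenius ones being solvable) that occur in $PSL_2$ and $PSU_3$. Handling these exceptional overlaps is precisely why the small groups were excluded in Proposition~\ref{prop:small} and why the bounds $q\ge 4$ and $q>2$ appear; the delicate point is to show that for the remaining $q$ the order and class data leave no solvable subgroup containing both $g$ and a suitable conjugate. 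I would resolve this by a case analysis on $\ell$ relative to $q\pm1$, $q^2\pm q+1$, and $p$, using character-table or structure-constant information where a purely structural argument is insufficient.
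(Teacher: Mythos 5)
The paper disposes of this proposition almost entirely by citation: for $L=PSL_2(q)$ it invokes \cite[Lemma 3.1]{GS} directly, for $L=PSU_3(q^2)$ it invokes the proof of \cite[Lemma 3.3]{GS}, and the only new argument supplied is for the one case those lemmas do not cover, namely a field automorphism of $PSU_3(q^2)$. There the paper observes that a standard field automorphism $g$ of prime order $\ell>3$ normalizes but does not centralize the $A_1$-subgroup generated by the root subgroups for the self-conjugate root of $A_2$, so the almost simple group $\langle g, PSL_2\rangle$ falls under the already-settled $PSL_2$ case. Your sketch, by contrast, tries to rebuild the content of the two Guralnick--Saxl lemmas, and it diverges from the paper precisely at the field-automorphism step --- which is where it has a real gap. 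Arranging that the commutators $[g^{\pm 1},x]$ ``land in and generate a non-solvable part of'' the subfield subgroup $C_L(g)$ does not by itself make $\langle g, xgx^{-1}\rangle$ non-solvable: you would still have to show that the subgroup of $L$ generated by those commutators (not merely some subgroup of $C_L(g)$ containing them) is non-solvable, and nothing in your outline forces that. The paper's device --- find a $g$-stable, non-centralized copy of $PSL_2(q)$ inside $L$ and quote the rank-one result for the resulting almost simple group --- is the missing idea; alternatively one needs a counting argument of the kind the paper deploys for field automorphisms of Suzuki groups, which is considerably heavier.

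A second concrete problem is your use of Gow's theorem for semisimple elements of $PSU_3(q^2)$. As quoted in the paper, Gow's theorem writes a given semisimple $z$ as $[g,x]$ with $g$ ranging over a \emph{regular semisimple} class; to apply it with your fixed $g$ you need $g$ itself to be regular. In $PSU_3(q^2)$ a semisimple element of prime order $\ell>3$ dividing $q+1$ need not be regular: the image of $\di(\zeta,\zeta,\zeta^{-2})$ with $\zeta^{\ell}=1$ has order $\ell$ but centralizer containing a $GU_2$-type subgroup. Your plan for the semisimple case therefore does not cover these elements, and they require a separate argument (in \cite{GS} this is part of what the proof of Lemma 3.3 handles). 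The $PSL_2(q)$ portion of your sketch and the treatment of unipotent elements of order $p=\ell>3$ via opposite root subgroups are plausible and consistent with what \cite[Lemma 3.1]{GS} actually does, but as written they reprove a cited result rather than shorten the argument.
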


\begin{proof}
In the case $L=PSL_2(q)$ the result follows from \cite[Lemma
3.1]{GS}. If $L=PSU_3(q^2)$, $q>2$, the result follows from the
proof of \cite[Lemma 3.3]{GS} with the single exception when $g$ is
a field automorphism. In the latter case we can take $g$ to be
standard. The order of $g$ is a prime number bigger than 3, and we
may thus assume that $q\neq 2,3$ and that $g$ normalizes but does
not centralize a subgroup of type $A_1$ generated by a
self-conjugate root of $A_2$. The result follows from \cite[Lemma
3.1]{GS}.
\end{proof}

Let $L$ be a Suzuki group ${}^2B_2(q^2)$ or a Ree group
${}^2G_2(q^2)$ where $q^2$ is an odd power of 2, in the Suzuki case,
or of 3, in the Ree case (see, e.g., \cite{Su1}, \cite{Kl2},
\cite{LN}). Then $L = \mathbb{G}^F$ where $\mathbb{G}$ is the
corresponding simple algebraic group (of type $B_2$ or $G_2$)
defined over the field ${\mathbb F}_{2}$  or ${\mathbb F}_3$, and
$F$ is the appropriate Frobenius endomorphism of $\mathbb G$
(\cite{SS}, \cite[1.3,~20.1]{Hu2}). There exists an $F$-stable Borel
subgroup $\mathbb{B}\leq \mathbb{G}$. The group $\mathbb{B}^F$ will
be called below a Borel subgroup of $L$. We fix one of such
subgroups $B$. Every Borel subgroup of $L$ is of the form $B^a$ for
some $a \in L$. We will denote by $T$ a maximal subgroup of
semisimple elements of a Borel subgroup $B^a$. Note that $T$ is the
subgroup of $F$-fixed elements of an $F$-stable torus of
$\mathbb{G}$ contained in an $F$-stable Borel subgroup of
$\mathbb{G}$. Hence we will call such a group $T$ a {\it quasisplit
torus} of $L$. Furthermore, we denote by $\mathfrak{T}$ any group of
$F$-fixed elements of an $F$-stable torus of $\mathbb{G}$ which is
not contained in any $F$-stable Borel subgroup of $\mathbb{G}$. We
call such a group a {\it nonsplit torus} of $L$. Note that
$\mathfrak{T}\cap B^a = 1$ for every $a \in L$. Recall that all
maximal tori in Suzuki--Ree groups are cyclic (see \cite{Su1},
\cite{V}).

For Suzuki and Ree groups, we consider the cases of inner and outer
automorphisms separately. Since all diagonal automorphisms are inner
in Suzuki--Ree groups \cite{CCNPW}, the case of inner-diagonal
automorphisms is reduced to the case of inner ones. We start with
the case where $g$ is an inner automorphism.

\begin{prop} \label{lem-Suzuki-Ree}
If $L$ is a Suzuki group $^2B_2(q^2)$, $q^2=2^{2m+1}$, $m\geq 1$, or
a Ree group $^2G_2(q^2)$, $q^2=3^{2m+1},$ $m \ge 1$, and $g\in L$ is
of prime order $> 3$, then there exists $x \in L$ such that the
group $\left<g,x gx\min\right>$ is not solvable.
\end{prop}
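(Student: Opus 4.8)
The plan is to exploit the special structure of Suzuki and Ree groups, namely that every maximal torus is cyclic and that a nonsplit torus $\mathfrak{T}$ meets every Borel subgroup trivially. First I would dispose of the semisimple case. Suppose $g\in L$ is semisimple of prime order $\ell>3$; then $g$ lies in some maximal torus, which is cyclic, so $g$ generates a cyclic group whose normalizer is, for these rank-one groups, a well-understood maximal subgroup (the torus normalizer, with the Weyl-type quotient of small order). The strategy here mirrors Lemma \ref{th:sym}: since $\ell>3$ does not divide the small index $[N_L(\langle g\rangle):\langle g\rangle]$, one shows that conjugating $g$ by an element $x$ lying outside $N_L(\langle g\rangle)$ forces $\langle g, xgx\min\rangle$ to be all of $L$, hence non-solvable. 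Concretely, I would invoke Gow's theorem, quoted just above: taking $C$ to be a class of regular semisimple elements and $z=g$ (after checking $g$ is not central, which holds since $L$ is simple and $g\neq 1$), there exist $h\in C$ and $x\in L$ with $g=[h,x]=hxh\min x\min$, which pins $g$ inside a subgroup generated by regular semisimple elements and their conjugates, and from the subgroup structure of a rank-one group one reads off non-solvability.

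The remaining and genuinely harder case is when $g$ is unipotent, i.e. $g$ lies in a Sylow $p$-subgroup (the unipotent radical $U=\mathbb{B}^F$ with $p=2$ for Suzuki and $p=3$ for Ree). Here $\ell=p$ is excluded when $p=3$ by the hypothesis $\ell>3$, so for Ree groups $g$ of order $3$ is ruled out and I only need to handle $p=2$ genuinely, i.e. unipotent elements in Suzuki groups ${}^2B_2(q^2)$ of order $\ell>3$ — but unipotent elements in characteristic $2$ have $2$-power order, so there are no unipotent elements of odd prime order $\ell>3$ at all. The same observation applies to Ree groups in characteristic $3$. Thus the unipotent case is vacuous: an element of prime order $\ell>3$ in $L$ can never be unipotent, since its order is coprime to the defining characteristic $p\in\{2,3\}$. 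This reduces everything to the semisimple case already outlined, which is the main content.

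Therefore the proof assembles as follows. Let $g\in L$ have prime order $\ell>3$. Since $\ell$ is coprime to the characteristic $p\in\{2,3\}$, the element $g$ is semisimple, hence (being nontrivial) contained in a nontrivial cyclic maximal torus. I then split according to whether $g$ lies in a quasisplit torus $T$ or a nonsplit torus $\mathfrak{T}$. In either situation I produce a conjugate $xgx\min$ such that $\langle g, xgx\min\rangle$ cannot lie in any single Borel subgroup: for the nonsplit case this is immediate from $\mathfrak{T}\cap B^a=1$ together with choosing $x$ so that $g$ and $xgx\min$ lie in distinct maximal tori; for the quasisplit case I use the analysis of the torus normalizer to choose $x\notin N_L(\langle g\rangle)$. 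A subgroup of a rank-one group of Lie type that is not contained in a Borel and contains a nontrivial semisimple element must, by the Borel--Tits / subgroup structure theory for these groups (or by direct inspection of the maximal subgroups of ${}^2B_2(q^2)$ and ${}^2G_2(q^2)$ in \cite{Su1}, \cite{Kl2}), either be the whole group or contain a section isomorphic to the simple $L$ itself; in particular it is non-solvable.

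The main obstacle I anticipate is the bookkeeping in the quasisplit-torus subcase: one must verify that a suitable $x$ exists for which the two cyclic tori generate enough of the group, and rule out the degenerate possibility that all conjugates of $g$ lie together in a common Borel subgroup (which would make $\langle g, xgx\min\rangle$ solvable). This is where Gow's theorem is most useful, since it guarantees that $g$ arises as a commutator of regular semisimple elements from a prescribed class, letting me steer the two generators into different maximal tori. The verification that the resulting subgroup escapes every Borel — rather than merely being larger than the torus — is the delicate point, and for the small-parameter cases it is precisely what the exclusions in Proposition \ref{prop:small} are designed to clear away, so I would check at the outset that $q^2$ is large enough (using $m\geq 1$) that the generic argument applies.
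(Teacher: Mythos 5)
Your reduction to the semisimple case (no unipotent elements of odd prime order $>3$ exist in characteristic $2$ or $3$) matches the paper, but the core of your argument misapplies Gow's theorem, and this is a genuine gap rather than a stylistic difference. In Gow's statement the conjugacy class $C$ of \emph{regular semisimple} elements is where the first entry of the commutator lives, while $z$ is an \emph{arbitrary} prescribed semisimple target. The paper's proof takes $C$ to be the class of $g$ itself (legitimate because every semisimple element of order $>3$ in a Suzuki or Ree group is regular) and chooses the target $z$ to be a generator of a maximal torus of the complementary type; then $z=[g,x]=g\cdot(xgx^{-1})^{-1}$ lies in $H=\langle g,\,xgx^{-1}\rangle$, which is exactly what plants a second maximal torus inside $H$. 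You instead set $z=g$ and conclude $g=[h,x]$ for some regular semisimple $h$; this places $g$ inside $\langle h,\,xhx^{-1}\rangle$ and gives no information whatsoever about subgroups generated by $g$ together with a conjugate of $g$. With the roles swapped, the step you describe as ``steering the two generators into different maximal tori'' is never actually performed.

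The surrounding structural claims also do not hold as stated. Choosing $x\notin N_L(\langle g\rangle)$ does \emph{not} force $\langle g,\,xgx^{-1}\rangle=L$: if $g$ lies in a quasisplit torus $T$, then the Borel subgroup $B=TU$ contains $g$, and any $x\in U\setminus N_L(\langle g\rangle)$ keeps both generators inside the solvable group $B$. Likewise your final assertion --- that a subgroup not contained in any Borel and containing a nontrivial semisimple element must be non-solvable --- is false: $N_L(\mathfrak T)$ for a nonsplit torus $\mathfrak T$ is such a subgroup and is solvable (of order $4|\mathfrak T|$ in the Suzuki case). The paper must therefore exclude, one by one, containment of $H$ in $N_L(T)$ (using $|N_L(T)/T|=2<\ell$), in a Borel subgroup (via the case split according to whether $g$ generates a maximal quasisplit torus, together with cyclicity of $B'/U'$), in $N_L(\mathfrak T)$, and in subfield subgroups (where, for Ree groups, the possibility $H\le PSL_2(q^2)$ survives and is handled by Proposition \ref{lem-PSL-PSU}). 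None of these exclusions is carried out in your sketch, so the proof as proposed does not close.
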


\begin{proof}
As the order of $g$ is greater than 3, it cannot be unipotent, so we
may and shall assume that $g$ is semisimple. We argue as in
\cite[Section 4]{GGKP1}. Note that all tori in the Suzuki and Ree
groups are cyclic, and all semisimple elements of order greater than
3 are regular \cite{Su1}, \cite{Kl2}, \cite{LN}. By Gow's theorem,
for every semisimple element $z^\prime \in L$ we can find $x, y\in L$ such that
$z =yz^\prime y^{-1} =  [g,x]$. Consider two cases:

\begin{itemize}
\item $g$ is a generator of some maximal quasisplit torus; 
\item $g$ is not a generator of any maximal quasisplit torus.
\end{itemize}

In the first case, choose $x$ so that $z=[g,x]$ would be a generator
of a nonsplit torus. In the second case, choose $x$ so that
$z=[g,x]$ would be a generator of some quasisplit torus.

Note that in both cases $g\notin \langle z\rangle$.

With such a choice of $x$, let $H = \left<g,xgx\min\right>$. By
construction, we have $T \leq H$ for some quasisplit torus $T$.

First note that $H$ is not contained in $N_L(T)$. Indeed, $g$ and
$z$ cannot both normalize $T$ since they are of prime order $>3$ and
one of them does not belong to $T$, whereas the order of $N_L(T)/T$
is 2.

Furthermore, $H$ is not contained in any Borel subgroup. Indeed, if
both $g$ and $xgx^{-1}$ belong to a Borel subgroup $B'=T'U'$ (where
$T'$ is a fixed maximal quasisplit torus and $U'$ is the subgroup of
unipotent elements), then  we are in the second case. Consider the
cyclic group $B'/U'$. Let $\bar g$ and $\overline{x gx\min}=\bar
g_1$ be the corresponding images. Then $\bar g$ and $\bar g_1$ are
of the same order, $\left<\bar g\right>\neq T'$, $\left<\bar
g_1\right>\neq T'$, but $\left< \bar g^{-1}\bar g_1\right>\cong T'$.
Contradiction.

The Suzuki groups have no maximal subgroups other than $N_L(T)$,
$N_L(\mathfrak{T})$, $B$, and  Suzuki groups over smaller fields
\cite{Su1}. The subgroup $H$ is not contained in a subgroup of the
latter type since $H$ contains a maximal torus of $L$. Furthermore,
$H$ is not contained in $N_L(\mathfrak{T})$ since it contains a
quasisplit torus. So we conclude that $H=L$. Using similar arguments
and the list of the maximal subgroups of Ree groups \cite{Kl2},
\cite{LN}, one can show that $H$ lies in a maximal subgroup of a Ree
group only in two cases: either $q^2=3$ (which is excluded by
Proposition \ref{prop:small}), or $H<PSL_2(q^2)$. In the latter case
we use Proposition \ref{lem-PSL-PSU} to conclude that
$H=PSL_2(q^2)$.
\end{proof}


It remains to consider the case of outer automorphisms of prime
order of Suzuki and Ree groups. Any such automorphism is a field
automorphism (see Proposition \ref{prop:aut_prime}) which is assumed
to be standard.

We start with the simpler case of Ree groups.

\begin{prop} \label{lem-Ree-field}
Let $L$ be a Ree group $^2G_2(q^2),$ $q^2=3^{2m+1}$, $m\ge 1$, and
let $g$ be a field automorphism of $L$ of prime order $\ell > 3$.
Then there exists $x\in L$ such that $\left<g,x gx\min\right>$ is
not solvable.
\end{prop}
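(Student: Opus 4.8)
The plan is to reduce the field-automorphism case for the Ree group to the already-established $PSL_2$ case, exactly as the paragraph preceding Proposition~\ref{lem-Ree-field} foreshadowed (``The case of field automorphisms of Ree groups can be reduced to the $PSL_2$-case''). The key structural fact I would exploit is the following: if $g$ is a standard field automorphism of $L={}^2G_2(q^2)$ of prime order $\ell>3$, then its group of fixed points $C_L(g)$ is itself a Ree group over a subfield, namely $C_L(g)\cong {}^2G_2(q_0^2)$ where $q^2=q_0^{2\ell}$ (this is the standard behaviour of field automorphisms, cf. the structure of centralizers of field automorphisms in \cite[Definition~2.5.13]{GLS} and \cite{St1}). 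The idea is that $g$ together with a suitable conjugate should generate a subgroup mapping onto, or containing, a copy of $PSL_2(q^2)$ inside the extension $\langle L,g\rangle$, after which Proposition~\ref{lem-PSL-PSU} closes the argument.

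Concretely, first I would fix the standard field automorphism $g$ and locate a $g$-stable subgroup of Lie type isomorphic to $PSL_2$ (or more precisely an $A_1$-subsystem subgroup) on which $g$ still acts as a field automorphism of the same prime order $\ell$. Since $L$ has rank $1$, the natural candidate is a subgroup generated by a long root $SL_2$-type subgroup together with the torus, but the cleanest route is to use that $g$ normalizes a suitable $g$-stable $PSL_2(q^2)\le L$ (such subgroups exist inside Ree groups; they appear in the list of maximal subgroups \cite{Kl2}, \cite{LN}). On this $g$-stable $PSL_2(q^2)$ the restriction of $g$ is again a standard field automorphism of prime order $\ell>3$. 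Second, I would invoke Proposition~\ref{lem-PSL-PSU}: applied to the almost simple group $\langle PSL_2(q^2), g\rangle$, it produces an element $x$ in that $PSL_2(q^2)$ with $\langle g, xgx\min\rangle$ non-solvable. Since this $x$ also lies in $L$, the same $x$ witnesses non-solvability of $\langle g, xgx\min\rangle$ inside $G$, which is exactly property $\bf{(NS)}$.

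The main obstacle I anticipate is \emph{verifying that $g$ genuinely restricts to a field automorphism} (and not, say, to an inner-diagonal automorphism, which could yield a solvable two-generated subgroup) on the chosen $g$-stable $PSL_2(q^2)$, and that the restricted order is still a prime $>3$. This requires a careful compatibility check between the Frobenius endomorphism defining $L={}^2G_2(q^2)$ and the inclusion of the $PSL_2(q^2)$ subgroup, so that the subfield structure $q^2=q_0^{2\ell}$ is respected and $g$ acts on the subgroup with the same fixed-subfield $\mathbb{F}$. I would handle this by choosing the $PSL_2$-subgroup to be defined over the same field $\mathbb{F}_{q^2}$ with a compatible $\mathbb{F}_{q_0^2}$-structure, so that the standard field automorphism $g$ of $L$ literally restricts to the standard field automorphism of $PSL_2(q^2)$ of order $\ell$; the small excluded case $q^2=3$ (where no such $\ell>3$ field automorphism arises) is already removed by Proposition~\ref{prop:small}. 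Once this compatibility is established, the reduction to Proposition~\ref{lem-PSL-PSU} is immediate and completes the proof.
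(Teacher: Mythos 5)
Your proposal is correct and takes essentially the same route as the paper: the paper exhibits the $g$-stable $PSL_2(q^2)\le L$ explicitly as the subgroup generated by the elements $u_A(t)=u_{a+b}(t^\vartheta)u_{3a+b}(t)$ and $u_{-A}(t)$, observes that the standard field automorphism $g$ normalizes but does not centralize it, and then invokes Proposition~\ref{lem-PSL-PSU}. The compatibility check you worry about is not actually needed, since Proposition~\ref{lem-PSL-PSU} applies to any element of prime order $>3$ in an almost simple group with socle $PSL_2(q^2)$, so it suffices that the induced automorphism is nontrivial (hence of order $\ell>3$).
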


\begin{proof}
The group $L$ contains a subgroup isomorphic to $PSL_2(K)$,
$K=\mathbb F_{q^2}$. This subgroup is generated by the elementary
unipotent elements of $L$ of type $u_A(t), u_{-A}(t)$, $t\in K$,
where $u_A(t)=u_{a+b}(t^\vartheta)u_{3a+b}(t), $ $\vartheta\colon
K\to K$, $3\vartheta^2=1$, and  $a$, $b$ are the short and long
simple roots of $G_2$, respectively (see \cite{LN}).

Hence $g$ normalizes and does not centralize $PSL_2(K)$. Thus the
assertion of the proposition follows from Proposition
\ref{lem-PSL-PSU}.
\end{proof}

\begin{prop}\label{lemma:field}
Let $L$ be a Suzuki group $^2B_2(q^{2\ell})$, $q^2=2^{2m+1}$, $m\geq
0$, and let $g$ be a field automorphism of $L$ of prime order $\ell$
greater than 3. Then there is $x\in L$ such that the subgroup
$\left<g,xgx\min \right>\cap L$ is not solvable.
\end{prop}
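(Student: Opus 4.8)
The plan is to prove Property \textbf{(NS)} for a field automorphism $g$ of prime order $\ell>3$ of the Suzuki group $L={}^2B_2(q^{2\ell})$, where $q^2=2^{2m+1}$. The first and central observation is that the fixed-point subgroup $C_L(g)$ of a standard field automorphism of order $\ell$ is itself a smaller Suzuki group, namely $C_L(g)={}^2B_2(q^2)$ (with $q^2=2^{2m+1}$), sitting inside $L$ as the subfield subgroup over $\mathbb F_{q^2}\subset\mathbb F_{q^{2\ell}}$. This is the analogue of the computation that for a standard field automorphism the fixed points are the points over the subfield; it is precisely what makes the counting argument of \cite{GS} applicable here, as the introduction to the section promised.

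The strategy I would carry out is a counting argument in the spirit of \cite[Lemma 3.1]{GS}. Since $\langle g, xgx\min\rangle \cap L$ is what we must show is non-solvable, the natural approach is to study the element $g\cdot xgx\min = g\,(xg\min x\min)\min$; more usefully, for $x\in L$ the product $g^{-1}(xgx\min) = [g^{-1},x]$ lies in $L$, and its conjugacy class as $x$ ranges over $L$ can be controlled. I would count, using the known character-theoretic or class-multiplication data for ${}^2B_2(q^{2\ell})$, the number of $x\in L$ for which the element $h(x):=g\minus\cdot xgx\minus \in L$ lands in a fixed ``good'' conjugacy class of regular semisimple elements generating a maximal torus. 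The aim is to show that for a suitable target class the number of such $x$ is strictly positive after subtracting the contribution of those $x$ for which $\langle g, xgx\min\rangle\cap L$ is contained in a proper (hence, by the list of maximal subgroups, solvable or too-small) subgroup. Concretely, I would fix a maximal nonsplit torus $\mathfrak T$ of $L$ on which $g$ acts, arrange that $h(x)$ generates a torus $T_0$ not normalized together with the image of $g$ in any single maximal subgroup, and then invoke the classification of maximal subgroups of Suzuki groups \cite{Su1}: the only maximal subgroups are $N_L(T)$, $N_L(\mathfrak T)$, the Borel $B$, and Suzuki subgroups over subfields. A subgroup containing both $g$ (inducing a field automorphism of order $\ell$) and a full torus cannot be contained in any of the torus-normalizers or in the Borel, and the subfield-subgroup possibility is exactly $C_L(g)={}^2B_2(q^2)$, which one treats by noting that the generated group strictly contains that subgroup.

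In more detail, the plan for the final identification step is: having produced $x$ so that $H:=\langle g, xgx\min\rangle\cap L$ contains a generator of a maximal torus $T$ of $L$, I would rule out $H\subseteq B^a$ and $H\subseteq N_L(T')$ by the same order-$\ell$ versus index-argument used in Proposition \ref{lem-Suzuki-Ree} (an element of prime order $\ell>3$ cannot lie in a group where the relevant quotient has order $2$ unless it lies in the torus itself, contradicting that $g$ is a field and not an inner automorphism). The remaining case $H\le {}^2B_2(q^2)$ is where the counting bites: I would show that the number of conjugates $xgx\min$ for which $[g\minus,x]$ is a regular semisimple generator of a torus \emph{not} contained in the fixed subgroup ${}^2B_2(q^2)$ exceeds the number of $x$ with $x\in C_L(g)\cdot(\text{stabilizer})$, so at least one choice forces $H=L$.

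The hard part will be the counting estimate itself: one must show that the number of $x\in L$ with $[g\minus,x]$ in the chosen target class is large enough to survive the subtraction of the ``bad'' $x$ (those for which $H$ falls into a subfield subgroup or a torus normalizer). This requires either an explicit formula for $|\{x\in L : [g\minus,x]\in C\}|$ — obtainable from the Frobenius formula $\sum_\chi \chi(g)\chi(g\minus)\chi(c)/\chi(1)$ summed over $\operatorname{Irr}(L)$, using that the character table of ${}^2B_2$ is known — or the adaptation of the geometric argument of \cite{GS} that bounds $|C_L(g)|=|{}^2B_2(q^2)|$ against $|L|$ and against the class sizes. The delicate point is that $g$ is an \emph{outer} automorphism, so one works inside the semidirect product $L\rtimes\langle g\rangle$ and must track the $\langle g\rangle$-coset contribution carefully; the saving grace is that $\ell>3$ makes $[L\rtimes\langle g\rangle:L]=\ell$ large enough that the fixed subgroup ${}^2B_2(q^2)$ is a very small fraction of $L$, so the comparison of orders goes through with room to spare.
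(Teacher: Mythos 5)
Your overall shape --- a counting argument comparing the set of ``bad'' $x$ against $|L|$, anchored on $C_L(g)={}^2B_2(q^2)$ and Suzuki's list of maximal subgroups --- is the same as the paper's, but the proposal has two genuine gaps where the actual work lives. First, the quantitative engine you propose does not apply as stated: the Frobenius formula $\sum_\chi\chi(g)\chi(g^{-1})\chi(c)/\chi(1)$ over $\operatorname{Irr}(L)$ counts solutions of $[g^{-1},x]=c$ only for $g\in L$; here $g$ is an \emph{outer} (field) automorphism, so the count is governed by characters extended to the coset $Lg$ in $L\rtimes\langle g\rangle$, and you give no bounds for those. The paper avoids this entirely: it never counts ``good'' $x$ at all, but instead bounds the bad set directly. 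The structural step that makes this possible --- if $\langle g,xgx^{-1}\rangle$ is solvable then by Kleidman's classification it lies in $\langle H^a,g\rangle$ for a $g$-stable maximal subgroup $H^a\le L$ with $xH^ax^{-1}$ also $g$-stable, so one only needs to count $\{s\in L: sH^as^{-1}\in\mathcal A_H\}$ --- is absent from your outline, and it is then executed via the Bruhat decomposition (giving $|S_B|\le q^{6\ell+9}$) and via the vanishing $H^1(\langle g\rangle,\mathfrak T)=1$ (giving $|S_{\mathfrak T_1}|+|S_{\mathfrak T_2}|\le q^{3\ell+24}$), both compared against $|L|\sim q^{9\ell}$.

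Second, your final identification step would fail even if the count were done: arranging that $[g^{-1},x]$ generates a maximal nonsplit torus $\mathfrak T$ does \emph{not} force $H=\langle g,xgx^{-1}\rangle\cap L$ to be non-solvable, because $N_L(\mathfrak T)$ is itself solvable (cyclic of odd order extended by a group of order $4$), so $H$ can contain all of $\mathfrak T$ and still be solvable. This is exactly the dangerous case; the paper handles it with Lemma \ref{prop:tori} (showing such an $H$ collapses into the torus itself, via Sublemmas \ref{sublem1} and \ref{sublem2} on $g$-invariance of $\Gamma_y\cap\mathfrak T^a$ and on the commutators $[g^{-r},x]$), and then counts the conjugating elements. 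Your ``order-$\ell$ versus index-$2$'' sketch addresses whether $g$ normalizes a torus, not whether the subgroup of $L$ you must prove non-solvable escapes $N_L(\mathfrak T)$. A smaller but real omission: the subfield case is not just $C_L(g)$; one must rule out $\Gamma_y=Sz(2)$ (the only solvable Suzuki group), which the paper does separately in Lemma \ref{lem:Sz2}.
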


\noindent {\it Proof}. Denote by $\Gamma$ the set of all $y=x
gx\min,$ $x\in L$,  such that the group $\Gamma_y:=\left<g,y\right> \cap
L$ is solvable. We shall prove that $|\Gamma|<|\{aga\min \mid a\in
L\}|$.

Note that $\Gamma_y$ is invariant under the action of $g$ because
$g\in\left<g,y\right>$, $gLg\min=L$, and
$\Gamma_y=\left<g,y\right>\cap L$.

Fix a Borel subgroup $B<L$, a maximal quasisplit torus $T<B$, and
maximal nonsplit tori $\mathfrak{T}$ of $L$  which are invariant
under the action of $g$. It is known that either
$\mathfrak{T}={\mathfrak{T}_1}$ or $\mathfrak{T}={\mathfrak{T}_2}$,
where the orders of cyclic groups ${\mathfrak{T}_1}$,
${\mathfrak{T}_2}$ are $q^{2\ell}+\sqrt{2q^{2\ell}}+1$ and $q^{2\ell}-\sqrt{2q^{2\ell}}+1$
respectively  (see \cite{Su1}, \cite{SS}). Recall that every maximal
subgroup of $L$ is conjugate to $B$, $N_L(T)$, $N_L(\mathfrak T)$,
or is isomorphic to a Suzuki group over a smaller field.

For every $y\in \Gamma$ the group $\Gamma_y$ lies in some maximal
subgroup of $L$. So $\Gamma_y < H^a$ where $a\in L$ and $H$ is of
one of the above types.

The case when $H$ is a Suzuki group over a smaller field can be
excluded  because  the essential case $\Gamma_y=H=Sz(q'^2)$,
$q^{\prime 2}\vert q^2$, cannot occur. Indeed, $Sz(q^2)$ is solvable
if and only if $q^2=2$.

\begin{lemma} \label{lem:Sz2}
With the above notation, we have $\Gamma_y \ne Sz(2).$
\end{lemma}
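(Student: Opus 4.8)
The plan is to argue by contradiction. Suppose $\Gamma_y=Sz(2)$ and exploit the fact that $Sz(2)$ is the Frobenius group of order $20$, so that $Sz(2)\cong C_5\rtimes C_4$ has a \emph{unique}, hence characteristic, Sylow $5$-subgroup $P\cong C_5$, with $Sz(2)/P\cong C_4$. Before the main computation I would record two facts about $D:=\langle g,xgx\min\rangle$ that are essentially already in hand. First, since $\Gamma_y=D\cap L$ is the intersection of $D$ with the normal subgroup $L\trianglelefteq\Aut(L)$, it is normal in $D$; as $P$ is characteristic in $\Gamma_y$, it follows that $P\trianglelefteq D$. Second, because $y=xgx\min$ is $L$-conjugate to $g$, the two images coincide in the cyclic group $\operatorname{Out}(L)$, so $D/\Gamma_y\cong\langle\bar g\rangle\cong C_\ell$; here $\bar g$ has order exactly $\ell$ because $g$ is an outer (field) automorphism of prime order, whence $\langle g\rangle\cap L=1$.

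The heart of the argument is then to pass to $Q:=D/P$. From the two facts above, $Q$ sits in a short exact sequence $1\to\Gamma_y/P\to Q\to C_\ell\to 1$ with $\Gamma_y/P\cong C_4$, so $|Q|=4\ell$. Because $\ell>3$ is prime we have both $\gcd(\ell,4)=1$ and $\ell\nmid|\Aut(C_4)|=2$, so this extension splits with trivial action and $Q\cong C_4\times C_\ell$. The images of $g$ and $y$ in $Q$ still have order $\ell$ (again $\langle g\rangle\cap L=\langle y\rangle\cap L=1$, and $\ell$ is coprime to $|P|=5$), so both lie in the unique subgroup of order $\ell$ of $Q$. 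Hence $\langle\bar g,\bar y\rangle\le C_\ell\subsetneq Q$, which contradicts $Q=D/P=\langle\bar g,\bar y\rangle$. This contradiction forces $\Gamma_y\ne Sz(2)$.

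The step I expect to be decisive --- and the only place the hypothesis $\ell>3$ is genuinely needed --- is the extension computation: one must know that a group generated by two elements of order $\ell$ cannot surject onto $C_4$, which is exactly what the coprimality of $\ell$ with $4=|C_4|$ and with $2=|\Aut(C_4)|$ delivers. A secondary point worth stating explicitly, since the whole contradiction rests on it, is that $g$ and $y$ retain their full order $\ell$ after reduction modulo $P$; this is immediate from $\langle g\rangle\cap L=1$ (primality of $\ell$ together with $g$ being outer). I would emphasize that the argument is uniform in $\ell$ and never has to isolate the delicate value $\ell=5$, where a priori $g$ could induce a nontrivial (necessarily inner, since $\operatorname{Out}(Sz(2))=1$) automorphism of $Sz(2)$; passing to $Q=D/P$ dissolves that case together with all the others.
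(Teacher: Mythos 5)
Your proof is correct. It rests on the same structural observations as the paper's: $\Gamma_y\trianglelefteq D:=\langle g,xgx^{-1}\rangle$, the Sylow $5$-subgroup $P=\langle a\rangle$ of $Sz(2)$ is characteristic in $\Gamma_y$ and hence normal in $D$, and $\ell>3$ is coprime to $4$. But the contradiction is extracted differently. The paper stays at the bottom of the extension: since $\ell$ is coprime to $|\Aut(C_5)|=4$, each of the generators $g$ and $y$ must centralize $P$, hence all of $D$ does, yet $b\in Sz(2)=\Gamma_y\leq D$ satisfies $bab^{-1}=a^2\neq a$. You instead pass to the top: using the additional (true and easy) fact that $D/\Gamma_y\cong C_\ell$, you identify $D/P\cong C_4\times C_\ell$ via Schur--Zassenhaus plus triviality of the action, and observe that such a group cannot be generated by two elements of order $\ell$. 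Both routes are valid and of comparable length; the paper's is marginally leaner in that it never needs to compute $|D|$ or split the extension, while yours makes the generation obstruction explicit and, as you note, treats $\ell=5$ uniformly with the other primes. One cosmetic slip: the parenthetical ``$\ell$ is coprime to $|P|=5$'' is false when $\ell=5$, but it is not needed --- the justification you also give, namely that $\langle g\rangle\cap L=1$ (so $\langle g\rangle\cap P=1$) forces the image of $g$ in $D/P$ to retain order $\ell$, already suffices.
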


\begin{proof}
Assume the contrary.  We have $\Gamma_y =
Sz(2)=\left<a\right>\rtimes\left<b\right>$, $a^5 = 1$, $b^4 = 1$,
$ba b^{-1} = a^2.$ Note that $\Gamma_y$ is a normal subgroup in
$\langle g,\, y\rangle $, the subgroup $\left<a\right>$ coincides
with the derived subgroup of $\Gamma_y = Sz(2)$, and it is a
characteristic subgroup in $\Gamma_y = Sz(2)$. Since the order
$\ell$ of $g$ is prime $ > 3$, we conclude that $gag^{-1} = a$,
$yay^{-1} = a$, and hence $\gamma a \gamma^{-1} = a$ for every
$\gamma \in \left< g, y\right>$. Since $ba b^{-1} = a^2$, we have $b
\notin \left< g, y\right>$. Contradiction.
\end{proof}



\begin{lemma}\label{prop:tori}
Suppose that $\Gamma_y$ is contained in $N_L(T^a)$ or in
$N_L(\mathfrak{T}^a)$ where $a\in L$. Then $\Gamma_y$ is contained
in $T^a$ $($and thus in $B^a)$ or in $\mathfrak{T^a}$, respectively.
\end{lemma}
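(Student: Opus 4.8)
The plan is to argue inside the overgroup $\hat\Gamma:=\langle g,y\rangle\le\Aut(L)$, exploiting that its ``$L$-part'' $\Gamma_y=\hat\Gamma\cap L$ is normal in $\hat\Gamma$ with cyclic quotient of order $\ell$. Identify $L$ with $\operatorname{Inn}(L)\trianglelefteq\Aut(L)$ (possible since $L$ is simple). As $y=xgx\min$ with $x\in L$, the elements $g$ and $y$ have the same image in $\operatorname{Out}(L)=\Aut(L)/L$, and that image has order $\ell$ because $g$ is a field automorphism of prime order $\ell$ (no nontrivial power of $g$ is inner). Hence $\Gamma_y\trianglelefteq\hat\Gamma$, the quotient $\hat\Gamma/\Gamma_y$ is cyclic of order $\ell$ generated by the common image of $g$ and $y$, and $\hat\Gamma=\Gamma_y\langle g\rangle$ with $\langle g\rangle\cap L=1$.

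First I would record the two facts about Suzuki tori that drive the proof. Write $S:=T^a$ in the first case and $S:=\mathfrak{T}^a$ in the second. In both cases $S$ is cyclic of \emph{odd} order (each torus order $q^{2\ell}-1$ and $q^{2\ell}\pm\sqrt{2q^{2\ell}}+1$ is odd, as $q^{2\ell}$ is a power of $2$), while $N_L(S)/S$ is a cyclic $2$-group (of order $2$ for $S=T^a$ and $4$ for $S=\mathfrak{T}^a$; see \cite{Su1}). Set $D:=\Gamma_y\cap S$. Since $|S|$ is odd and $\Gamma_y/D$ embeds into the $2$-group $N_L(S)/S$, the subgroup $D$ is the normal $2$-complement of $\Gamma_y$; being the unique Hall $2'$-subgroup it is characteristic in $\Gamma_y$, and as $\Gamma_y\trianglelefteq\hat\Gamma$ this yields $D\trianglelefteq\hat\Gamma$. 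Moreover $\Gamma_y/D$ is cyclic of $2$-power order $m\mid 4$.

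The key step is to force $m=1$. In $\bar H:=\hat\Gamma/D$ the subgroup $\Gamma_y/D$ is a normal cyclic $2$-subgroup of order $m\le 4<\ell$, with $\bar H/(\Gamma_y/D)\cong\Z/\ell$. The image of $g$ acts on $\Gamma_y/D$ as an automorphism whose order divides both $\ell$ and $|\Aut(\Gamma_y/D)|\le 2$, hence trivially; therefore $\bar H=(\Gamma_y/D)\cdot\langle\bar g\rangle$ is a product of two commuting cyclic groups of coprime orders, so $\bar H$ is cyclic of order $\ell m$. But $\bar H$ is generated by $\bar g$ and $\bar y$, each of order exactly $\ell$ (they satisfy $\bar g^\ell=\bar y^\ell=1$ and map onto generators of $\hat\Gamma/\Gamma_y\cong\Z/\ell$). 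In a cyclic group of order $\ell m$ with $\gcd(\ell,m)=1$ every element of order $\ell$ lies in the unique subgroup of order $\ell$, so $\langle\bar g,\bar y\rangle$ has order $\ell$; comparing with $|\bar H|=\ell m$ gives $m=1$. Thus $\Gamma_y=D\subseteq S$, which is the assertion, and $T^a\subseteq B^a$ yields the parenthetical claim.

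I expect the only real obstacle to be the bookkeeping that makes $\hat\Gamma/D$ cyclic: one must check that $D$ is normal in all of $\hat\Gamma$ (via its being the characteristic normal $2$-complement of the normal subgroup $\Gamma_y$) and that the odd-order element $g$ centralizes the small cyclic $2$-group $\Gamma_y/D$. Granting these, the contradiction is the elementary fact that two elements of order $\ell$ cannot generate a cyclic group of order $\ell m$ with $m>1$.
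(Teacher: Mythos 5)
Your proof is correct, and it takes a genuinely different route from the paper's. The paper proceeds through two sublemmas (that $T_y=\Gamma_y\cap T^a$ is $g$-invariant, proved via the subgroup generated by squares resp.\ fourth powers, and that every commutator $[g^{-r},x]$ lies in $T_y$) and then rewrites an arbitrary word in $g$ and $xgx^{-1}$ lying in $L$ as a product of $g$-conjugates of elements of $T_y$. You instead exploit the structure of the overgroup $\langle g,y\rangle$: since $S$ has odd order and $N_L(S)/S$ is a $2$-group, $D=\Gamma_y\cap S$ is the set of odd-order elements of $\Gamma_y$, hence characteristic in $\Gamma_y$ and therefore normal in $\langle g,y\rangle$; then the quotient $\langle g,y\rangle/D$ is a direct product $(\Gamma_y/D)\times\langle\bar g\rangle$ of coprime-order groups (because $\ell>3$ cannot act nontrivially on a group of order at most $4$), and the elementary observation that this quotient is generated by two elements of order $\ell$ forces $\Gamma_y=D$. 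Both arguments ultimately rest on the same numerical facts ($|S|$ odd, $|N_L(S)/S|\le 4$, $\ell>3$), but yours packages them into a short normal-complement/order-counting argument that avoids the word manipulation and the two sublemmas entirely, and it is robust even without knowing that $N_L(\mathfrak{T}^a)/\mathfrak{T}^a$ is cyclic (any group of order at most $4$ has automorphism group of order coprime to $\ell$). The paper's version, by contrast, makes explicit which concrete elements (the commutators $[g^{-r},x]$) land in $T_y$, which is in the spirit of the surrounding computations. One small point worth making explicit in your write-up: the fact that $\Gamma_y\trianglelefteq\langle g,y\rangle$ and that $\langle g,y\rangle/\Gamma_y$ has order exactly $\ell$ uses that $y=xgx^{-1}$ with $x\in L$ and that no nontrivial power of the field automorphism $g$ is inner; you do state this, and it is correct in the Suzuki setting since $\mathrm{Out}(L)$ is cyclic generated by the Frobenius.
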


To prove Lemma \ref{prop:tori}, we need two more auxiliary
assertions.

\begin{sublemma} \label{sublem1}
Suppose that $\Gamma_y\leq N_L(T^a)$ $($respectively, $\Gamma_y \leq
N_L(\mathfrak{T}^a))$. Then $T_y := \Gamma_y \cap T^a $
$($respectively, $\mathfrak{T}_y :=\Gamma_y \cap \mathfrak{T}^a)$ is
$g$-invariant.
\end{sublemma}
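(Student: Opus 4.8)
The plan is to show that in each case the relevant torus is precisely the set of elements of odd order in its normalizer, and then to invoke the already-established fact that $\Gamma_y$ is invariant under the action of $g$, so that $g$ restricts to an automorphism of $\Gamma_y$ and therefore preserves the set of odd-order elements of $\Gamma_y$.

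First I would record the arithmetic. Writing $Q=q^{2\ell}$, the quasisplit torus $T$ has order $Q-1$, while the nonsplit tori $\mathfrak{T}_1,\mathfrak{T}_2$ have orders $Q+\sqrt{2Q}+1$ and $Q-\sqrt{2Q}+1$. Since $Q=2^{(2m+1)\ell}$ is an odd power of $2$ and $\sqrt{2Q}$ is again a power of $2$, all three of these orders are odd, and the same holds for any conjugate $T^a$ or $\mathfrak{T}^a$. On the other hand, $N_L(T)/T$ has order $2$ and $N_L(\mathfrak{T})/\mathfrak{T}$ has order $4$, so in each case the quotient of the normalizer by the torus is a $2$-group.

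The key step is the following characterization. Let $N=N_L(T^a)$ and $M=T^a$ (respectively $N=N_L(\mathfrak{T}^a)$ and $M=\mathfrak{T}^a$). I claim that $M$ is exactly the set of elements of odd order in $N$. Every element of $M$ has odd order because $|M|$ is odd. Conversely, if $n\in N$ lies outside $M$, then its image in the $2$-group $N/M$ is nontrivial and hence of even order; since the order of $n$ is a multiple of the order of its image, $n$ has even order. Thus $M$ consists precisely of the odd-order elements of $N$, and in particular $M$ is characteristic in $N$.

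It then follows immediately that $T_y=\Gamma_y\cap T^a$ (respectively $\mathfrak{T}_y=\Gamma_y\cap\mathfrak{T}^a$) is the set of odd-order elements of $\Gamma_y$: since $\Gamma_y\leq N$ by hypothesis, intersecting the odd-order elements of $N$ with $\Gamma_y$ yields exactly the odd-order elements of $\Gamma_y$. Because $\Gamma_y$ is $g$-invariant, $g$ restricts to an automorphism of $\Gamma_y$, and any automorphism preserves the set of elements of a fixed order; hence $g(T_y)=T_y$ (respectively $g(\mathfrak{T}_y)=\mathfrak{T}_y$), which is the assertion. The only point requiring care, and the one place where the specific structure of Suzuki groups enters, is the verification that the torus orders are odd while the normalizer quotients are $2$-groups; once this is in hand the $g$-invariance is automatic.
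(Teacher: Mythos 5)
Your proof is correct and is essentially the paper's argument in another guise: both identify $T_y$ (resp.\ $\mathfrak{T}_y$) as a characteristic piece of the $g$-invariant group $\Gamma_y$, using precisely the two facts that the torus has odd order while the normalizer-mod-torus quotient is a $2$-group of order $2$ (resp.\ $4$). The paper realizes this by showing $T_y$ equals the subgroup $\Gamma_y^2$ generated by squares (resp.\ $\Gamma_y^4$, generated by fourth powers), whereas you characterize it as the set of odd-order elements of $\Gamma_y$; the two devices are interchangeable here and rest on the same structural input.
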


\begin{proof}
Let $\Gamma_y\leq N_L(T^a)$. Denote by $\Gamma^2_y$ the subgroup of
$\Gamma_y$ generated by the squares of the elements of $\Gamma_y$.
It is clear that $\Gamma^2_y$ is invariant under $g$. Since
$|N_L(T)/T|=2$, all elements of $\Gamma^2_y$ belong to $T$, so
$\Gamma^2_y$ lies in $T_y$. However, all elements of $T$ are of odd
order, therefore $\Gamma^2_y=T_y$. Hence $T_y$ is invariant under
$g$. In order to get the statement for $\mathfrak{T}_y$, we repeat
the above argument with $\Gamma^2_y$ replaced by $\Gamma^4_y$.
\end{proof}

\begin{sublemma} \label{sublem2}
Suppose that $\Gamma_y$ is contained in $N_L(T^a)$ or in
$N_L(\mathfrak{T}^a)$. Then for any integer $r$ the element
$[g^{-r},x ]$ belongs to $T_y$ or to $\mathfrak{T}_y$,
respectively.
\end{sublemma}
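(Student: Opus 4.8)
The plan is to compute $[g^{-r},x]$ explicitly and then control its image in the relevant Weyl-type quotient. Since $y=xgx\min$ we have $xg^rx\min=(xgx\min)^r=y^r$, so $[g^{-r},x]=g^{-r}xg^rx\min=g^{-r}y^r$; write $c_r:=g^{-r}y^r$. First I would check that $c_r\in\Gamma_y$: it visibly lies in $\langle g,y\rangle$, and because $x\in L$ the images of $g$ and $y$ in $G/L$ coincide, so the image of $c_r$ in $G/L$ is $(\bar g)^{-r}(\bar g)^r=1$; hence $c_r\in\langle g,y\rangle\cap L=\Gamma_y$. Thus $c_r$ already lies in $N_L(T^a)$ (respectively $N_L(\mathfrak T^a)$), and the real content of the sublemma is to push it into the smaller subgroup $T_y$ (respectively $\mathfrak T_y$).

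For that I would exploit Sublemma \ref{sublem1}, which makes $T_y$ (respectively $\mathfrak T_y$) a $g$-invariant normal subgroup of $\Gamma_y$, so that $g$ acts on the quotient $Q:=\Gamma_y/T_y$. This quotient embeds into $N_L(T^a)/T^a$, of order $2$ (respectively into $N_L(\mathfrak T^a)/\mathfrak T^a$, of order $4$). Since $g$ has odd prime order $\ell>3$, which is coprime to $|\Aut(Q)|$ (the latter divides $6$ in all the cases that arise), $g$ acts trivially on $Q$. Denoting by $\bar c_r$ the image of $c_r$ in $Q$, the recursion $c_{r+1}=g^{-1}c_rg\cdot c_1$ (a direct manipulation of $g^{-r}y^r$, using $c_1=g^{-1}y$) then collapses modulo $T_y$ to $\bar c_{r+1}=\bar c_r\,\bar c_1$, giving $\bar c_r=\bar c_1^{\,r}$ for every $r$.

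The final step, which I expect to be the crux, is to kill $\bar c_1$. Here I would use the order relation: since $g$ has order $\ell$ we have $g^\ell=1$, hence $c_\ell=g^{-\ell}y^\ell=y^\ell=(xgx\min)^\ell=xg^\ell x\min=1$. Therefore $\bar c_1^{\,\ell}=\bar c_\ell=1$, and since the order of $\bar c_1$ must also divide the exponent of $Q$ (which divides $4$), coprimality with $\ell$ forces $\bar c_1=1$. Consequently $\bar c_r=1$ for all $r$, i.e.\ $c_r=[g^{-r},x]\in T_y$ (respectively $\mathfrak T_y$), as required. The only point demanding care is the coprimality bookkeeping in the nonsplit case, where the quotient has order $4$ rather than $2$; but $\ell\ge 5$ is coprime to both $4$ and $6$, so the argument goes through unchanged.
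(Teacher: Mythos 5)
Your argument is correct and is essentially the paper's own proof in a slightly different packaging: both pass to the quotient $\Gamma_y/T_y$ (of order dividing $2$ or $4$), use coprimality of $\ell$ with that quotient to see that $g$ acts trivially on it, and then exploit $g^\ell=y^\ell=1$ together with coprimality of $\ell$ and the exponent of the quotient to force $[g^{-r},x]$ into $T_y$ (the paper telescopes $(g^{-r})^\ell(x)$ to get $z^\ell\in T_y$, you telescope the recursion $c_{r+1}=g^{-1}c_rg\cdot c_1$ to get $\bar c_r=\bar c_1^{\,r}$ with $\bar c_1^{\,\ell}=1$ --- the same computation). Your explicit check that $c_r\in\Gamma_y$ is a detail the paper leaves implicit, but it does not change the route.
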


\begin{proof}
If $\ell\vert r$, the assertion is satisfied for trivial reasons, so
assume that $r$ is prime to $\ell$. Let $\Gamma_y \leq N_L(T^a)$.
Then $z:=g^{-r}xg^{r}x \in N_L(T^a)$. Assume $z\in \Gamma_y
\setminus T_y$. Then $g^{-r}(x)=g^{-r}xg^{r}=zx$. Furthermore,
$g^{-r}$ can act only trivially on $\Gamma_y/T_y$ since for $r$
prime to $\ell$ the order of $g^{-r}$ is $\ell >3$ whereas the order
of $\Gamma_y/T_y$ is $\leq 4$. Hence $g^{-r}(z)z\min\in T_y.$ Thus
$x=(g^{-r})^\ell (x)=z^\ell xt$ with $t\in T_y$, so $z^\ell\in T_y$.
But $z\in N_L(T^a) \setminus T^a$ and $|N_L(T^a)/T^a|=2$, therefore
$z^\ell \not\in T^a$. Contradiction.  Hence $z \in T_y$. The same
proof can be given for the case $\Gamma_y \leq N_L(\mathfrak{T}^a)$.
\end{proof}

We are now ready to prove Lemma \ref{prop:tori}.

Let $a=g$, $b=xgx\min $. Then any word $w =\cdots a^k b^m a^n\cdots$
can be written as $ \cdots a^{k+m} (a^{-m}b^m)a^{n}\cdots = \cdots
g^u[g^{-m},x ]g^v\cdots$, where the commutator in the middle
belongs to $T_y$ in view of Sublemma \ref{sublem2}. Therefore $w=
g^{u_1}z_1g^{u_2}z_2 \cdots$, where $z_1, z_2\in T_y$. Since $w \in
L$, the sum $u_1 +u_2 +\cdots$ is divisible by $\ell$, so
\[
w = (g^{u_1}z_1 g^{-u_1})(g^{u_1+u_2}z_2 g^{-u_1-u_2})(
g^{u_1+u_2+u_3}\cdots )= \prod (g^{v_i}z_ig^{-v_i}).
\]
By Sublemma \ref{sublem1}, the latter element must belong to $T_y$.

The case $\Gamma_y \leq N(\mathfrak{T}^a)$ is treated in exactly the
same way. \qed

\medskip

We thus may and shall assume that $\Gamma_y\leq H^a$ where $H=B$ or
$H=\mathfrak T$.

We are now able to estimate the number of elements $y=x gx\min$,
$x\in L$, such that the group $\Gamma_y=\left<g,y\right> \cap L$ is
solvable.

Denote $\mathcal A_H := \{ H^a | \ gH^ag^{-1}=H^a,\ a\in L\}$.

Denote $L_1=\left<L,g\right>$. Note that $\left<g,y\right>\neq L_1$
because the group $\left<g,y\right>$ is solvable. Hence
$\left<g,y\right>$ is contained in a proper maximal subgroup $M$ of
$L_1$. By \cite{Kl1}, $M$ is conjugate to a subgroup of the form
$N_{L_1}(H^a)=\left<H^a,g\right>$ where $H^a$ is a maximal subgroup
in $L$ invariant under $g$.

So we may assume that $\left<g,y\right>$ lies in a semidirect
product $H^a\rtimes\left<g\right>$ and $\Gamma_y$ lies in some $H^a$
such that $gH^ag^{-1}=H^a$. We have $yH^ay^{-1}=H^a$ because $H^a$
is normal in $M$. The equality $yH^ay^{-1}=H^a$ can be rewritten as
$g(x\min H^ax )g^{-1}=x\min H^ax$, or, in other words, as $x\min
H^ax\in \mathcal A_H$. Hence it is enough to estimate, for each $H$,
the number of elements in the set
\[
S_H:=\{s\in L : sH^as\min =H^{a'}  \text{\rm{ for some }}H^a,
H^{a'}\in \mathcal A_H\}.
\]

\begin{lemma} \label{lem:sb}
\[
\vert S_B\vert \leq q^{6\ell +9}.
\]
\end{lemma}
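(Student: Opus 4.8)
### The plan for Lemma \ref{lem:sb}

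The goal is to bound $|S_B|$, the number of $s\in L$ that conjugate some $g$-invariant Borel $H^a=B^a$ to another $g$-invariant Borel $B^{a'}$. The plan is to decompose this count as a product: the number of choices for the source Borel $B^a\in\mathcal A_B$, times the number of $s$ carrying that fixed $B^a$ to \emph{some} $g$-invariant Borel. First I would observe that once the source Borel $B^a$ is fixed, the set of $s$ with $sB^as\min=B^{a'}\in\mathcal A_B$ is a union of cosets of the normalizer $N_L(B^a)=B^a$ (since in a rank-$1$ group a Borel is self-normalizing), one coset for each admissible target $B^{a'}$. Hence
\[
|S_B|\le |\mathcal A_B|\cdot |\mathcal A_B|\cdot |B|.
\]
So the whole estimate reduces to bounding two quantities: the cardinality $|\mathcal A_B|$ of $g$-invariant Borels, and the order $|B|$ of a Borel subgroup of $L={}^2B_2(q^{2\ell})$.

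The order of $B$ is standard: for a Suzuki group over the field of size $Q:=q^{2\ell}$ one has $|B|=Q^2(Q-1)$, which is comfortably below $Q^3=q^{6\ell}$. The main work is therefore estimating $|\mathcal A_B|$, the number of Borel subgroups of $L$ fixed by the field automorphism $g$. Here I would argue that a $g$-invariant Borel corresponds to a $g$-fixed point on the coset space $L/B$, i.e.\ on the flag variety of the Suzuki group. Since $g$ is a standard field automorphism of order $\ell$, its fixed points on $L/B$ are controlled by the smaller Suzuki group $C_L(g)=Sz(q^2)$ (the group of fixed points of $g$), and the number of such fixed Borels is on the order of $|Sz(q^2)/B_{\mathrm{fix}}|$, which is of size roughly $q^{4}+1$ — polynomially small in $q$ and, crucially, \emph{independent of the large exponent} $\ell$. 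Concretely I expect a clean bound of the shape $|\mathcal A_B|\le q^{c}$ for a small absolute constant $c$.

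Assembling the pieces, $|S_B|\le |\mathcal A_B|^2\cdot|B|$ with $|\mathcal A_B|\le q^{c}$ and $|B|<q^{6\ell}$ gives $|S_B|\le q^{6\ell+2c}$, and verifying that the fixed-Borel count is small enough to push $2c\le 9$ yields the claimed $|S_B|\le q^{6\ell+9}$. The one genuine obstacle is the fixed-point count $|\mathcal A_B|$: I must be careful that a $g$-invariant Borel need not be defined over the fixed field in a naive sense, and that the correspondence between $g$-fixed Borels and Borels of $C_L(g)$ is exact rather than merely injective. The safest route is a direct Lang--Steinberg style argument — $g$-fixed Borels of $\mathbb G^{F}$ are exactly the $F$-stable Borels fixed by the lift of $g$, and these are enumerated by the flag variety of the fixed-point subgroup — rather than trying to count invariant Borels by hand inside $Sz(q^{2\ell})$. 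Once this count is pinned down as $O(q^{c})$ with $c$ small, the rest is the routine arithmetic above.
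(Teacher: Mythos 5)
Your decomposition $\vert S_B\vert \le \vert\mathcal A_B\vert\cdot\vert\mathcal A_B\vert\cdot\vert B\vert$ is exactly the structure of the paper's count: the paper first shows that for a fixed $g$-invariant Borel the set of $s$ with $sBs^{-1}$ again $g$-invariant has size $\vert B\vert(1+\vert C_U(g)\vert)$ (one coset of $B$ per admissible target, as you say), and then multiplies by the number of possible source Borels in $\mathcal A_B$. Where you genuinely diverge is in how $\vert\mathcal A_B\vert$ is computed. The paper exploits the fact that the Bruhat decomposition of a rank-one group has only two cells: writing $s=u\dot wb$ and pushing the invariance condition $g(sBs^{-1})g^{-1}=sBs^{-1}$ through the opposite Borel forces $g^{-1}(u^{-1})u\in B^{-}\cap U=1$, i.e.\ $u\in C_U(g)$, giving $\vert C_U(g)\vert=q^4$ extra Borels beyond $B$ itself. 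You instead count $g$-fixed points on $L/B$ by a Lang--Steinberg argument, identifying them with the Borels of $C_L(g)=Sz(q^2)$ and getting $q^4+1$. The two counts agree (the paper's final line quotes $q^4$ rather than $q^4+1$ for $\vert\mathcal A_B\vert$, an off-by-one that is immaterial), and your route is the more conceptual one, at the price of having to verify the vanishing of the nonabelian $H^1(\langle g\rangle,B^a)$ so that every $g$-fixed coset of $B^a$ actually meets $C_L(g)$ --- a point you correctly flag; coprimality of $\ell>3$ with $\vert U\vert$ handles the unipotent part, and the toral part needs the same kind of check as Lemma \ref{lem:h1}. One caution on the arithmetic: with the exact count $\vert\mathcal A_B\vert=q^4+1$ the product $(q^4+1)^2\vert B\vert$ exceeds $q^{6\ell+9}$ by a hair when $q^2=2$, so to land on the stated exponent you must either restrict to $q^2\ge 8$ or absorb the discrepancy into the generous slack of the final comparison $q^{6\ell+11}<q^{9\ell}<\vert L\vert$ in Proposition \ref{lemma:field}; this is a constant-chasing issue shared with the paper's own write-up and does not affect the application.
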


\begin{proof} First recall that we denote $L=\ ^2B_2(q^{2\ell})$. We
have $C_L(g)=\ ^2B_2(q^2)$. The orders of the above groups are
$q^{4\ell}(q^{2\ell}-1)(q^{4\ell}+1)$ and $q^4(q^2-1)(q^4+1)$,
respectively.

We have $H=B=TU$ where $U$ is the maximal unipotent subgroup of $B$.
Denote by $C_U(g)$ the centralizer of $g$ in $U$. By the definition
of $S_H$ we have $S_B\supseteq B$ (because $B\in \mathcal A_B)$.
Furthermore, let $s\notin B$ and suppose that $sBs\min$ is
$g$-invariant. The Bruhat decomposition of $L$ contains only two
cells, hence we can represent $s$ in the form $s=u{\dot w}b$ with
$u\in U$, $b\in B$, and $w$ the non-identity element of the Weyl
group. The condition $g(sBs\min)g\min =sBs\min$ can be rewritten as
$u^{-1}guB^-u^{-1}g^{-1}u = B^-$ where $B^-$ stands for the Borel
subgroup opposite to $B$. This subgroup is invariant under $g$, so
applying $g\min$ to the last equality we conclude that
$v:=g\min(u\min)u$ normalizes $B^-$ and hence belongs to $B^-$. On
the other hand, $v$ is a product of two elements from $U$ and thus
belongs to $U$. As $B^-\cap U=1$, we conclude that $v=1$, i.e., $u$
belongs to the centralizer $C_U(g)$. Thus the set of $s\notin B$
such that $sBs\min$ is $g$-invariant is in one-to-one correspondence
with the set of pairs $\{(b,u)\}$ with $b\in B$ and $u\in C_U(g)$.
The number of such pairs equals $\vert B\vert\cdot\vert
C_U(g)\vert$. Hence the number of $s\in L$ such that $sBs\min$ is
$g$-invariant equals $\vert B \vert (1+\vert C_U(g)\vert
)=q^{4\ell}(q^{2\ell}-1)(q^4+1).$

This calculation should be repeated for every $B^a=aBa^{-1}$ such
that $g(aBa^{-1})g^{-1}=aBa^{-1}$, i.e., for each $B^a\in \mathcal
A_B$. Let us write the last condition in the form $a\min gaBa\min
g\min a=B$ and use the Bruhat decomposition $a=uwb$ for $a$, as in
the above paragraph. The same computation shows that the number of
such groups $B^a$ equals $\vert C_U(g)\vert =q^4$. Thus we conclude
that
\[
\vert S_B\vert \leq q^{4\ell }(q^{2\ell }-1)(q^4 +1)q^4 \leq
q^{6\ell +9} .
\]
\end{proof}

In order to treat the case of nonsplit tori in a similar way we need
the following

\begin{remark} \label{rem:coprime}
If $\mathfrak T_1$ and $\mathfrak T_2$ are nonconjugate nonsplit
tori in $Sz(q^{2\ell})$, then they are cyclic groups of orders
$q^{2\ell}+\sqrt{2q^{2\ell}}+1$ and $q^{2\ell}-\sqrt{2q^{2\ell}}+1$,
respectively. These are odd numbers whose difference is
$2\sqrt{2q^{2\ell}}$ which is a power of 2, hence they are coprime.
\end{remark}

\begin{lemma} \label{lem:h1}
Let $\mathfrak T$ be a nonsplit $g$-stable torus of $L=Sz(q^{2\ell})$.  Then the first
cohomology group $H^1(\left< g\right>,\mathfrak T)$ is trivial.
\end{lemma}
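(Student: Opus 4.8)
The plan is to reduce the statement to a cyclic-group computation on the $\ell$-primary part of $\mathfrak T$ and then to settle that part by comparing $\mathfrak T$ with its subgroup of $g$-fixed points. Write $\langle g\rangle\cong C_\ell$ and $N=|\mathfrak T|$. Since $\mathfrak T$ is finite cyclic, it splits $g$-equivariantly as $\mathfrak T=\mathfrak T_\ell\times\mathfrak T_{\ell'}$ into its $\ell$-primary and prime-to-$\ell$ parts (both are characteristic, hence $g$-stable), and correspondingly $H^1(\langle g\rangle,\mathfrak T)=H^1(\langle g\rangle,\mathfrak T_\ell)\oplus H^1(\langle g\rangle,\mathfrak T_{\ell'})$. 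The prime-to-$\ell$ summand vanishes because $H^1$ of a finite group is killed both by the group order $\ell$ and by the module order $|\mathfrak T_{\ell'}|$, which are coprime. Thus I only have to treat $\mathfrak T_\ell$, and if $\ell\nmid N$ there is nothing left to prove.

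Assume then $\ell\mid N$, so $\mathfrak T_\ell\cong\Z/\ell^a$ with $a\ge1$, on which $g$ acts through some $c\in(\Z/\ell^a)^\times$ with $c^\ell\equiv1$. A direct evaluation of $H^1(C_\ell,\Z/\ell^a)=\ker(\nu)/(c-1)\mathfrak T_\ell$, where $\nu=1+c+\dots+c^{\ell-1}$, shows that this group is trivial exactly when $g$ acts nontrivially on $\mathfrak T_\ell$ (a trivial action would give $\Hom(C_\ell,\Z/\ell^a)\cong\Z/\ell\neq1$). So the whole lemma reduces to the claim that, when $\ell\mid N$, the automorphism $g$ does not centralize $\mathfrak T_\ell$; this is the main obstacle.

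To establish it I would bring in the algebraic torus. Let $\mathbb T$ be the $F$-stable ($F=g^\ell$) maximal torus of $\mathbb G$ with $\mathbb T^F=\mathfrak T$; it is $g$-stable, and because a $g$-fixed point is automatically $F$-fixed we have $\mathfrak T^g=\mathbb T^g$, a maximal torus of $C_L(g)={}^2B_2(q^2)$. The map $g$ preserves the lattice $X_*(\mathbb T)\cong\Z^2$, so its eigenvalues $\alpha,\beta$ are algebraic integers with $\alpha+\beta\in\Z$ and $\alpha\beta=q^2$, and the standard order formula gives
\[
N=|(\alpha^\ell-1)(\beta^\ell-1)|,\qquad d_0:=|\mathfrak T^g|=|(\alpha-1)(\beta-1)|,\qquad R:=N/d_0\in\Z,
\]
the integer $R$ being the norm of $1+\alpha+\dots+\alpha^{\ell-1}$ (note $\mathbb T^g\le\mathbb T^F$, so $R$ really is a positive integer). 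Reducing modulo $\ell$: if $\ell\mid N$ then $\bar\alpha^\ell=1$ or $\bar\beta^\ell=1$ in $\overline{\F}_\ell$, and in either case the corresponding factor $\sum_{i=0}^{\ell-1}\bar\alpha^{\,i}$ of $\bar R$ vanishes --- it equals $\ell$ when $\bar\alpha=1$, and equals $(\bar\alpha^\ell-1)/(\bar\alpha-1)=0$ otherwise --- so $\ell\mid R$. Hence $v_\ell(d_0)=v_\ell(N)-v_\ell(R)<v_\ell(N)=a$, i.e. $\ell^a\nmid|\mathfrak T^g|$, which forces $\mathfrak T_\ell\not\subseteq\mathfrak T^g$. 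Therefore $g$ acts nontrivially on $\mathfrak T_\ell$, and combining with the first two paragraphs yields $H^1(\langle g\rangle,\mathfrak T)=1$. The one genuinely delicate point is precisely this valuation comparison $\ell\mid N\Rightarrow\ell\mid R$, which is why the fixed-point identification $\mathfrak T^g=\mathbb T^g$ together with the eigenvalue description of the torus orders is essential.
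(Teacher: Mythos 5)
Your proof is correct and follows essentially the same route as the paper's: reduce to the Sylow $\ell$-subgroup $\mathfrak T_\ell$, show that $g$ cannot centralize it, and conclude via the vanishing of $H^1$ for a nontrivial action of $C_\ell$ on a cyclic $\ell$-group (which you compute directly where the paper cites it). The only real difference is cosmetic: where you compare $\ell$-adic valuations of $|\mathbb T^{F}|$ and $|\mathbb T^{F^{\ell}}|$ via the Frobenius eigenvalues on the cocharacter lattice, the paper runs the same valuation comparison concretely, showing that $\mathfrak T_\ell\le C_L(g)=Sz(q^2)$ would force $\ell\nmid(q^{4\ell}+1)/(q^{4}+1)$, while $\ell\mid q^{4}+1$ forces $\ell\mid(q^{4\ell}+1)/(q^{4}+1)$.
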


\begin{proof}
It is enough to prove that $H^1(\left<g\right>,\mathfrak T_\ell )=1$
where $\mathfrak T_\ell$ is a Sylow $\ell$-subgroup of $\mathfrak
T$.

Let us first prove that if $\mathfrak T_\ell\ne 1$, then $g$ does
not centralize $\mathfrak T_\ell$. Assume the contrary. Then
$\mathfrak T_\ell$ is contained in $Sz(q^2)$. The order of
$\mathfrak T$ is either $N^-=q^{2\ell}-\sqrt{2q^{2\ell}}+1$, or
$N^+=q^{2\ell}+\sqrt{2q^{2\ell}}+1$. Note that $N^+$ and $N^-$
cannot be both divisible by $\ell$ (see Remark \ref{rem:coprime}).
As $\mathfrak T_\ell < Sz(q^2)$, we have $q^4+1\equiv 0\pmod\ell$
(because the order of $\mathfrak T_\ell$ divides both $|Sz(q^2)|=
q^4(q^2-1)(q^4+1)$ and $N^+N^-=q^{4\ell}+1$) and $\ell$ does not
divide $(q^{4\ell}+1)/(q^4+1)$. On the other hand, $q^4+1\equiv
0\pmod\ell$ implies $(q^{4\ell}+1)/(q^4+1)=((q^4)^{\ell
-1}-(q^4)^{\ell -2}+\dots +1)\equiv 0\pmod\ell$. Contradiction. Thus
$g$ acts nontrivially on $\mathfrak T_\ell$.

As $\mathfrak T$ is a cyclic group, we finish the proof by noting
that if a cyclic group $C$ of the order $\ell$ acts nontrivially on
a cyclic $\ell$-group $M$ ($\ell$ odd), we have $H^i(C,M)=1$ for all
$i\ge 1$ \cite[Ch.~II, Example~7.9]{AM}.
\end{proof}

We are now able to repeat for the tori $\mathfrak T=\mathfrak T_1$
and $\mathfrak T_2$ the computations already performed for the case
of Borel subgroups.

\begin{lemma} \label{lem:st}
\[
\vert S_{\mathfrak T_1}\vert + \vert S_{\mathfrak T_2}\vert \leq
q^{3\ell + 24}.
\]
\end{lemma}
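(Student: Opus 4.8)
The plan is to estimate $|S_{\mathfrak T_1}|$ and $|S_{\mathfrak T_2}|$ by the same scheme used for the Borel subgroup in Lemma \ref{lem:sb}, the role of the count of $g$-invariant Borel subgroups now being played by the count of $g$-invariant conjugates of a nonsplit torus. Fix $\mathfrak T=\mathfrak T_i$, write $N=N_L(\mathfrak T)$, and recall $\mathcal A_{\mathfrak T}=\{\mathfrak T^a : g\mathfrak T^a g\min =\mathfrak T^a\}$. For an ordered pair $(\mathfrak T^a,\mathfrak T^{a'})$ of elements of $\mathcal A_{\mathfrak T}$ the transporter $\{s\in L : s\mathfrak T^a s\min =\mathfrak T^{a'}\}$ is either empty or a single coset of $N_L(\mathfrak T^a)$, hence has at most $|N|$ elements. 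Since every $s\in S_{\mathfrak T}$ lies in the transporter of some such pair, summing over the $|\mathcal A_{\mathfrak T}|^2$ ordered pairs yields $|S_{\mathfrak T}|\le |\mathcal A_{\mathfrak T}|^2\,|N|$. Everything is thereby reduced to bounding $|\mathcal A_{\mathfrak T}|$.

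The key point is that $|\mathcal A_{\mathfrak T}|$ is far smaller than the trivial bound $|L/N|$: the $g$-invariant conjugates form a single orbit under $C_L(g)$. Identify $\mathcal A_{\mathfrak T}$ with $(L/N)^g$, on which $C_L(g)$ acts with the coset of $\mathfrak T$ as a base point whose stabilizer is $N^g=N\cap C_L(g)$; the $C_L(g)$-orbits on $(L/N)^g$ are in bijection with $\ker\big(H^1(\langle g\rangle,N)\to H^1(\langle g\rangle,L)\big)$, so it suffices to show $H^1(\langle g\rangle,N)=1$. I would do this using the $g$-equivariant extension $1\to\mathfrak T\to N\to W\to 1$ with $W=N/\mathfrak T$ of order $4$: the exact sequence of pointed sets gives $H^1(\langle g\rangle,\mathfrak T)\to H^1(\langle g\rangle,N)\to H^1(\langle g\rangle,W)$, where $H^1(\langle g\rangle,\mathfrak T)=1$ by Lemma \ref{lem:h1} and $H^1(\langle g\rangle,W)=1$ because $\ell$ is prime to $|W|=4$. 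Hence $H^1(\langle g\rangle,N)=1$, so $\mathcal A_{\mathfrak T}$ is a single orbit and $|\mathcal A_{\mathfrak T}|=|C_L(g)|/|N^g|$. Passing to $g$-fixed points in the same extension and using Lemma \ref{lem:h1} once more makes $N^g\to W^g$ surjective; as $g$ acts trivially on $W$ (its order $\ell$ being prime to $|\mathrm{Aut}(W)|$) we get $W^g=W$ and therefore $|N^g|=4\,|C_{\mathfrak T}(g)|\ge 4$.

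It remains to insert orders. Recall from the proof of Lemma \ref{lem:sb} that $C_L(g)=Sz(q^2)$, of order $q^4(q^2-1)(q^4+1)<q^{10}$, so $|\mathcal A_{\mathfrak T}|=|C_L(g)|/|N^g|\le \tfrac14|C_L(g)|<\tfrac14 q^{10}$. Moreover $|N_L(\mathfrak T_i)|=4|\mathfrak T_i|<12\,q^{2\ell}$, since $|\mathfrak T_i|\le q^{2\ell}+\sqrt{2q^{2\ell}}+1<3q^{2\ell}$. The bound of the first paragraph then gives, for $i=1,2$, $|S_{\mathfrak T_i}|\le |\mathcal A_{\mathfrak T_i}|^2\,|N_L(\mathfrak T_i)|<\big(\tfrac14 q^{10}\big)^2\cdot 12\,q^{2\ell}=\tfrac34\,q^{2\ell+20}$, whence $|S_{\mathfrak T_1}|+|S_{\mathfrak T_2}|<\tfrac32\,q^{2\ell+20}\le q^{3\ell+24}$, the last inequality holding because $q\ge\sqrt2$ and $\ell>3$ force $q^{\ell+4}\ge 2^{(\ell+4)/2}>\tfrac32$. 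The one genuinely non-formal step is the single-orbit statement: without the vanishing of $H^1(\langle g\rangle,N)$ — that is, without Lemma \ref{lem:h1} together with the coprimality of $\ell$ and $4$ — the number of $g$-invariant conjugates could a priori be as large as $|L/N|\approx q^{8\ell}$, and the estimate would collapse.
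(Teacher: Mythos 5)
Your proof is correct, and it actually yields a sharper estimate ($\tfrac32 q^{2\ell+20}$) than the stated bound. The essential engine is the same as in the paper --- everything reduces to the vanishing of $H^1(\langle g\rangle,\mathfrak T)$ from Lemma \ref{lem:h1} together with the identification $C_L(g)=Sz(q^2)$ --- but your packaging is genuinely different. The paper fixes a source torus, shows via Lemma \ref{prop:tori} that the cocycle $z=g^{-1}s^{-1}gs$ lies in $\mathfrak T$ itself, splits it as a coboundary, and obtains the explicit factorization $s=a^{-1}t$ with $a\in C_L(g)$, $t\in\mathfrak T$; this single computation simultaneously bounds the transporter out of a fixed source by $|\mathfrak T|\cdot|C_L(g)|$ and the number of $g$-stable conjugates by $|C_L(g)|$. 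You instead bound $|S_{\mathfrak T}|$ by $|\mathcal A_{\mathfrak T}|^2\,|N_L(\mathfrak T)|$ through elementary transporter counting and then identify $\mathcal A_{\mathfrak T}$ as a single $C_L(g)$-orbit, which requires the additional vanishing $H^1(\langle g\rangle,N_L(\mathfrak T))=1$; your d\'evissage through $1\to\mathfrak T\to N_L(\mathfrak T)\to N_L(\mathfrak T)/\mathfrak T\to 1$ and the coprimality of $\ell$ with $4$ handle this cleanly, and as a by-product you bypass Lemma \ref{prop:tori} entirely in this count. The two routes give numerically equivalent estimates; yours is more systematic (the exact sequence of nonabelian cohomology does the bookkeeping and gives the exact value $|\mathcal A_{\mathfrak T}|=|C_L(g)|/|N^g|$ rather than an upper bound), while the paper's is more hands-on. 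If you write this up, make explicit the structural fact that $N_L(\mathfrak T_i)/\mathfrak T_i$ is cyclic of order $4$ (you use it both for $|W|=4$ and for $|N|=4|\mathfrak T|$); the paper relies on it only implicitly, in Sublemmas \ref{sublem1} and \ref{sublem2}.
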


\begin{proof}
Fix a maximal nonsplit torus $\mathfrak T$ invariant under $g$. For
$s\in L$ such that $s\mathfrak Ts\min\in \mathcal A_{\mathfrak T}$,
consider $z:=g\min s\min gs$. Arguing as in the proof of Lemma
\ref{lem:sb}, we arrive at the equality $z\mathfrak Tz\min
=\mathfrak T$, i.e., $z\in N_L(\mathfrak T)$. Since $g\in
N_{L_1}(\mathfrak T)$ (recall that $L_1 = \langle L, g\rangle$), we
have $gz=s\min gs\in N_{L_1}(\mathfrak T)$, and therefore the group
$\left<g,s\min gs\right>$ is contained in $N_{L_1}(\mathfrak T)$, so
$\left<g,s\min gs\right>\cap L$ is contained in $N_L(\mathfrak T)$.
By Lemma \ref{prop:tori}, this group is contained in $\mathfrak T$,
so $z$ defines  a cocycle with values in $\mathfrak T$. By Lemma
\ref{lem:h1}, $H^1(\left<g\right>,\mathfrak T)=1$, therefore
$z=g\min t\min gt$ with $t\in \mathfrak T$. Therefore
$g(ts^{-1})=ts^{-1}$ whence $ts^{-1}=a\in C_L(g)=Sz(q^2)$. Thus
$s=a\min t$ with $a\in Sz(q^2)$, $t\in\mathfrak T$. Therefore the
number of elements $s\in L$ such that $s\mathfrak Ts\min\in \mathcal
A_{\mathfrak T}$ is bounded by $\vert\mathfrak T\vert\cdot\vert
Sz(q^2)\vert = \vert \mathfrak{T}\vert q^4(q^2-1)(q^4+1).$

This estimate should be repeated for each $\mathfrak T^\gamma\in
\mathcal A_{\mathfrak T}$, i.e., for every $\mathfrak T^\gamma$ such
that $g\mathfrak T^\gamma g^{-1}= \mathfrak T^\gamma$.  We have seen
above that $ \mathfrak T^\gamma= \gamma \mathfrak{T}\gamma^{-1}\in
\mathcal A_{\mathfrak T}$ if and only if $ \gamma \in Sz(q^2)$.
Hence the the number of  groups $\mathfrak T^{\gamma}$ is bounded by
$\vert Sz(q^2)\vert$.

Thus
\[
\begin{aligned}
\vert S_{\mathfrak T}\vert & \leq \vert \mathfrak{T}\vert\cdot
q^4(q^2-1)(q^4+1)\cdot (q^4(q^2-1)(q^4+1))\\ &={\vert
\mathfrak{T}\vert} q^8(q^2-1)^2(q^4+1)^2 \leq \vert
\mathfrak{T}\vert q^{22} .
\end{aligned}
\]
Since there are two nonconjugate nonsplit tori $\mathfrak{T}_1$ and
$\mathfrak{T}_2$, we get $\vert S_{\mathfrak T_1}\vert + \vert
S_{\mathfrak T_2}\vert \leq q^{22}( \vert\mathfrak{T}_1\vert + \vert
\mathfrak{T}_2\vert)$. Recall that the orders of the tori are
$q^{2\ell}-\sqrt{2q^{2\ell}}+1$ and $q^{2\ell}+\sqrt{2q^{2\ell}}+1$,
so each order is less than $q^{3\ell}$. Thus we get the needed
estimate $\vert S_{\mathfrak T_1}\vert + \vert S_{\mathfrak
T_2}\vert \leq 2q^{{3\ell}+22}\leq q^{{3\ell}+24}$.
\end{proof}

We can now finish the proof of Proposition \ref{lemma:field}.

By Lemmas \ref{lem:sb} and \ref{lem:st}, the total number of all
elements $s\in L$ such that $sH^as\min \in\mathcal A_H$ for some
$H^a\in \mathcal A_H$ is bounded by $q^{3\ell + 24} + q^{6\ell +
9}.$ If $\ell\geq 5$ then $6\ell + 9 \geq  3\ell +24$, and we have
$q^{3\ell + 24} + q^{6\ell + 9}\leq 2q^{6\ell + 9}\leq q^{6\ell +
11}. $

We have proved above that if the group $\langle g, xgx^{-1}\rangle$
is solvable  then $\langle g , xg x^{-1}\rangle \leq \langle H^a ,
g\rangle$ for some  $g$-stable maximal subgroup $H^a\leq L$.
Moreover, in this case $xH^ax^{-1}$ is also $g$-stable. Hence if
$\langle g , xgx^{-1}\rangle$ is solvable then there exists a
$g$-stable maximal subgroup $H^a $ such that $x H^ax^{-1}$ is also
$g$-stable. We have just estimated the number of those $x$ such that
there is a $g$-stable maximal subgroup $H^a$ for which the group
$xH^ax^{-1}$ is also $g$-stable. This number is not more than
$q^{6\ell +11}$. But
\[
q^{6\ell +11}< q^{9\ell} < \mid L\mid =
q^{4\ell}(q^{2\ell}-1)(q^{4\ell}+1).
\]
Thus we can find $x\in L$ such that the  group $\langle g,
xgx^{-1}\rangle$ is nonsolvable.

Proposition \ref{lemma:field} is proved. \qed

\medskip

Theorem \ref{prop:rank1} now follows from Propositions
\ref{lem-PSL-PSU}, \ref{lem-Suzuki-Ree}, \ref{lem-Ree-field}, and
\ref{lemma:field}. \qed


\section{General case}\label{sec:gen}

In this section we prove the main part of Theorem \ref{th:bigorder}
considering almost simple groups of Lie rank $>1$ not of type
${}^2F_4$. The case ${}^2F_4$ will be treated separately in the last
section.

\begin{theorem}\label{th:rank}
Let $L$ be a simple group of Lie type of Lie rank $\geq 2$, $L\ne
{}^2F_4(q^2)$, and let $L\leq G\leq \Aut L$. Then $G$ satisfies
$\bf{(NS)}$.
\end{theorem}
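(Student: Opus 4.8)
The plan is to verify Property \textbf{(NS)} by a case analysis on the type of the automorphism $g$, using Proposition \ref{prop:aut_prime} to split into the inner-diagonal and the field-automorphism cases, and reducing each, whenever possible, to the rank-$1$ result of Proposition \ref{lem-PSL-PSU}. The guiding principle is the following reduction lemma: if $g$ normalizes a subgroup $M\leq L$ isomorphic to $PSL_2(q)$ (with $q\geq 4$) or to $PSU_3(q^2)$ (with $q>2$), and $g$ induces on $M$ a nontrivial automorphism of prime order $\ell>3$, then Proposition \ref{lem-PSL-PSU} applied inside the almost simple group $\langle M,g\rangle$ yields $x\in M$ with $\langle g,xgx\min\rangle$ projecting onto a non-solvable subgroup of $\Aut(M)$. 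Since $x\in M$ forces both $g$ and $xgx\min$ to normalize $M$, the group $\langle g,xgx\min\rangle$ maps onto this non-solvable image and is therefore itself non-solvable.

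First I would treat field automorphisms. A standard field automorphism $g$ of order $\ell$ fixes every root and restricts, on a fundamental rank-$1$ subsystem subgroup $M$ of type $A_1$ (or on the appropriate twisted rank-$1$ subgroup $PSU_3$), to a field automorphism of order $\ell$; since $\ell\mid\log_p q$ forces $q$ large, $M$ has non-solvable socle and the reduction above applies verbatim, the finitely many small fields being excluded by Proposition \ref{prop:small}. Next, for inner-diagonal $g$ with $\ell\neq p$ (semisimple $g$): as $g\neq 1$ is non-central there is a root $\alpha$ with $\alpha(g)\neq 1$, and $g$ normalizes $M_\alpha=\langle U_\alpha,U_{-\alpha}\rangle$, acting by $u_\alpha(t)\mapsto u_\alpha(\alpha(g)t)$, i.e. as a diagonal automorphism of order $\ord(\alpha(g))=\ell$; when $M_\alpha$ has non-solvable socle the same reduction finishes the argument. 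The unipotent case $\ell=p$ I would handle by embedding $g$ in a rank-$1$ subgroup $M$ of type $A_1$ (possible, for $\ell$ a good prime, by the theory of $\mathfrak{sl}_2$-triples) and again invoking Proposition \ref{lem-PSL-PSU}, now with $g$ an element of order $\ell$ lying inside $M$ rather than acting as an outer automorphism of it.

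The hard part is the residual inner case — principally the regular semisimple elements lying in non-split tori — where the scaling reduction does not directly produce a $g$-invariant rank-$1$ subgroup with non-solvable socle; this is where Gow's theorem enters. Here I would choose, via Gow's theorem, the conjugate $xgx\min$ so that the commutator $z=[g,x]=g\,(xgx\min)\min$ is a generator of a maximal torus $S$ of a type different from (and of order coprime to) the torus containing $g$, with $g\notin N_L(S)$. Then $H:=\langle g,xgx\min\rangle=\langle g,z\rangle$ contains $S$ but is not contained in $N_L(S)$, and I would rule out $H$ lying in any solvable subgroup from the incompatibility of the two tori. I expect the main obstacle to be precisely this non-solvability verification: one must show, using the CFSG-based classification of maximal subgroups of $G$, that an overgroup of two maximal tori of incompatible types together with $g$ cannot sit inside a parabolic, a proper reductive (subsystem) subgroup, a subfield subgroup, or an exotic maximal subgroup, and hence must be non-solvable. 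The groups over $\F_2$ and $\F_3$, where the fundamental $A_1$ subgroups become solvable, together with the exceptional configuration $^2F_4$, are removed beforehand by Proposition \ref{prop:small} and by the separate treatment of Section \ref{sec:F4}.
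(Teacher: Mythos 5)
Your treatment of field automorphisms and of semisimple elements lying in a split torus matches the paper's: in both situations one exhibits a $g$-stable but not $g$-centralized rank-one subgroup and invokes Proposition \ref{lem-PSL-PSU} (the paper's Lemma \ref{lem:parab} does exactly your ``$\alpha(g)\neq 1$'' step, and handles the unipotent case more simply than your $\mathfrak{sl}_2$-triple suggestion, by projecting onto a Levi quotient of a parabolic and appealing to the minimal-counterexample hypothesis --- no good-prime caveats needed). The genuine gap is in what you yourself identify as ``the hard part'': the regular semisimple elements in no proper parabolic. Your proposal there --- use Gow's theorem to make $[g,x]$ generate a maximal torus $S$ of incompatible type and then rule out all solvable overgroups of $\langle g,S\rangle$ via the classification of maximal subgroups --- is not carried out, and it does not scale from rank one to rank $\geq 2$ as written: maximal tori of higher-rank groups are generally not cyclic (so ``generator of $S$'' is not available), the relative Weyl group $N_L(S)/S$ typically has order divisible by primes $>3$ (so the cheap ``$g\notin N_L(S)$ since $\ell>3$'' argument fails), and the required survey of solvable overgroups of a maximal torus across all types and all diagonal-coset representatives is a substantial classification effort that nothing in the paper reduces to. Indeed the paper reserves exactly this Gow-plus-special-tori strategy for the cases where it is tractable --- rank one (cyclic tori, known maximal subgroups) and ${}^2F_4$ (Malle's tori $T_{10},T_{11}$) --- and does something entirely different in Section \ref{sec:gen}.

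What the paper actually does in the residual case: after establishing that $g$ is regular semisimple, lies in no proper parabolic, and normalizes no unipotent subgroup (Borel--Tits), it conjugates $g$ into a Coxeter cell, $g=u\dot w_c$, and chooses $x$ so that the commutator $[g,x]=u$ is \emph{unipotent} (not semisimple, so Gow is not used here). The assumed solvability of $H=\langle g,xgx\min\rangle$ then gives a Hall $\{p,\ell\}$-subgroup containing $g$ and $u$, whose maximal abelian normal subgroup must be an $\ell$-group by the non-normalization lemma; a fixed-point and centralizer analysis splits into three cases, two of which contradict minimality or the commutator structure of $H$, and the last of which forces $\mathfrak{G}'=\ASL_\ell$ or $\ASU_\ell(q)$ and is killed by a monomial-matrix reduction to $S_\ell$. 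You would need to either supply the full maximal-subgroup analysis your route demands, or replace it by an argument of this kind; also note that your sketch does not address non-inner diagonal automorphisms, for which Gow's theorem (stated for the simple group $L$) does not directly apply and for which the paper builds the auxiliary reductive group $\mathfrak{G}$ of Lemma \ref{prop:red}.
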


Suppose that the property {\bf (NS)} does not hold for some group
$G$. We may assume for $G$ the following property ({\bf MC} stands
for ``minimal counter-example''):

{\bf{MC}:} \nopagebreak

(a) $G$ is a finite almost simple group which does not satisfy
 {\bf(NS)};

(b) $[G,G] = L$ is a simple group of Lie type different from
${}^2F_4$;

(c) if $H$ is a group satisfying conditions (a) and (b), then the
order of $[H,H]$ is greater than or equal to the order of $L$.

\medskip

{\bf Throughout below $g \in G$ is an element of prime order $\ell
>3$ such that the group $\langle g, x gx\min\rangle$ is solvable for
every $x \in L$} (such an element exists according to hypothesis
(a)).

\medskip

Suppose that $g$ induces a field automorphism of $L$. Then one can
find a subgroup $L_1 = \langle U_{\pm \alpha}\rangle$ where $U_{\pm
\alpha}$ are root subgroups which are $g$-stable but not centralized
by $g$ (this follows from the definition of field automorphism.)
Since the order of $g$ is a prime number $\geq 5$, the group $L_2 =
L_1/Z(L_1)$ is a simple group of rank one, and $g$ induces on $L_2$
an automorphism of prime order $\geq 5$. Then the almost simple
group $G_1 = \langle g, L_2\rangle$ does not satisfy {\bf (NS)}.
This contradicts Theorem \ref{prop:rank1}.

Thus, in view of classification of automorphisms of prime order (see
Proposition \ref{prop:aut_prime}), we may assume that $g$ induces an
inner-diagonal automorphism of $L$, and therefore we may also assume
that
\[
g \in G = \langle \sigma, L\rangle
\]
where $\sigma$ is a diagonal automorphism of $L$.

\subsection{} \label{subsec5.1}
Recall that the group $L$ can be represented in the form
\[
L =[\mathbb{G}(K), \mathbb{G}(K)] =
\mathbb{G}_{\text{\rm{sc}}}(K)/Z(\mathbb{G}_{\text{\rm{sc}}}(K))
\]
where $\mathbb{G}_{\text{\rm{sc}}}$ is a simple, simply connected
linear algebraic group  defined over a finite field $K$ and $\mathbb
G=\mathbb{G}_{\text{\rm{ad}}}$ is the corresponding {\it adjoint }
group.

\begin{lemma}\label{prop:red}
There exists a reductive algebraic group $\mathfrak{G}$ defined over
a finite field $K$ satisfying the following conditions:

{\text{\rm{(i)}}} the centre of $\mathfrak{G}$ is a torus and the
derived group $\mathfrak{G}^\prime$ is simply connected;

{\text{\rm{(ii)}}} $\mathfrak{G}^\prime(K) /
Z(\mathfrak{G}^\prime(K)) \cong L$;

{\text{\rm{(iii)}}} there is $\tau \in \mathfrak{G}(K)$ such that
$\langle \tau ,\mathfrak{G}^\prime(K)\rangle /Z(\langle \tau
,\mathfrak{G}^\prime(K)\rangle \cong G.$
\end{lemma}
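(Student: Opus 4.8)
The plan is to produce $\mathfrak{G}$ in the spirit of a regular embedding (z-extension), the guiding idea being that a \emph{connected} centre forces the diagonal automorphisms of $L$ to be realized by conjugation by $F$-fixed points; in particular the diagonal automorphism $\sigma$ defining $G=\langle\sigma,L\rangle$ will be absorbed into an inner automorphism coming from some $\tau\in\mathfrak{G}(K)$.

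First I would construct $\mathfrak{G}$ explicitly. The centre $Z:=Z(\mathbb{G}_{\mathrm{sc}})$ is a diagonalizable $F$-group scheme, hence (dualizing an equivariant surjection onto its finite character group) it admits an $F$-equivariant closed embedding $\iota\colon Z\hookrightarrow\mathbb{S}$ into a torus $\mathbb{S}$ defined over $K$. Put
\[
\mathfrak{G}:=(\mathbb{G}_{\mathrm{sc}}\times\mathbb{S})/D,\qquad D:=\{(z,\iota(z)^{-1}):z\in Z\},
\]
which is an $F$-stable (hence $K$-defined) connected reductive group since $D$ is a central, finite, $F$-stable subgroup. I would then read off (i)--(ii) directly: the image of $\{1\}\times\mathbb{S}$ is all of $Z(\mathfrak{G})$ and is a torus, so the centre is connected; the map $s\mapsto(s,1)D$ is a closed embedding of $\mathbb{G}_{\mathrm{sc}}$ onto the derived group $\mathfrak{G}'$ (as $\mathbb{G}_{\mathrm{sc}}$ is perfect and $\mathbb{S}$ abelian), so $\mathfrak{G}'$ is simply connected and $\mathfrak{G}'^{F}=\mathbb{G}_{\mathrm{sc}}^{F}$; finally $\mathfrak{G}/Z(\mathfrak{G})\cong\mathbb{G}_{\mathrm{sc}}/Z=\mathbb{G}_{\mathrm{ad}}$, so (ii) follows from $L=\mathbb{G}_{\mathrm{sc}}(K)/Z(\mathbb{G}_{\mathrm{sc}}(K))$.

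The substance of the lemma is (iii). Let $p\colon\mathfrak{G}\to\mathfrak{G}/Z(\mathfrak{G})=\mathbb{G}_{\mathrm{ad}}$ be the projection; on $\mathfrak{G}'=\mathbb{G}_{\mathrm{sc}}$ it is the canonical isogeny $\pi$. Because $Z(\mathfrak{G})$ is a connected torus, Lang's theorem gives $H^{1}(F,Z(\mathfrak{G}))=1$, so the induced map $p\colon\mathfrak{G}^{F}\to\mathbb{G}_{\mathrm{ad}}^{F}$ is \emph{surjective}. Under the standard identification $\mathbb{G}_{\mathrm{ad}}^{F}\cong\mathrm{Inndiag}(L)$ (see \cite{GLS}), the image $p(\mathbb{G}_{\mathrm{sc}}^{F})$ is $\mathrm{Inn}(L)=L$, while $G=\langle\sigma,L\rangle$ sits inside $\mathbb{G}_{\mathrm{ad}}^{F}$. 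Using surjectivity I choose $\tau\in\mathfrak{G}^{F}=\mathfrak{G}(K)$ with $p(\tau)$ the image of $\sigma$, and set $M:=\langle\tau,\mathfrak{G}'(K)\rangle=\langle\tau,\mathbb{G}_{\mathrm{sc}}^{F}\rangle$, so that $p(M)=\langle\sigma,L\rangle=G$.

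It remains to identify $Z(M)$ with $\ker(p|_{M})=M\cap Z(\mathfrak{G})^{F}$, and this is the one place where I expect genuine care to be needed. The inclusion $M\cap Z(\mathfrak{G})^{F}\subseteq Z(M)$ is immediate, since $Z(\mathfrak{G})^{F}$ is central in $\mathfrak{G}^{F}\supseteq M$. For the reverse inclusion I would use $p(Z(M))\subseteq Z(p(M))=Z(G)$ together with the fact that $G$ is almost simple with nonabelian simple socle $L$, whence $Z(G)=1$; thus $Z(M)\subseteq\ker(p|_{M})=M\cap Z(\mathfrak{G})^{F}\subseteq Z(M)$, forcing equality. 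Therefore
\[
M/Z(M)=M/(M\cap Z(\mathfrak{G})^{F})\cong p(M)=G,
\]
which is exactly (iii). The main obstacle is thus not the formal construction of $\mathfrak{G}$ (which is routine once the embedding $\iota$ is in place) but the bookkeeping in (iii): guaranteeing that $\sigma$ really lifts to a $K$-rational point — the surjectivity that makes this work is precisely what demands a connected centre — and checking that passing to $M/Z(M)$ neither collapses nor enlarges $G$, which rests on $Z(G)=1$.
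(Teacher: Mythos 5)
Your construction is the same as the paper's: you form the z-extension $(\mathbb{G}_{\mathrm{sc}}\times\mathbb{S})/D$ with the centre embedded anti-diagonally into a torus (the paper takes $\mathbb{S}$ to be a maximal quasisplit torus of $\mathbb{G}_{\mathrm{sc}}$), and for (iii) you invoke Lang's theorem on the connected centre exactly as the paper does, merely phrased as surjectivity of $\mathfrak{G}^F\to\mathbb{G}_{\mathrm{ad}}^F$ rather than as correcting a lift $\tilde\sigma$ by an element of the torus. The argument is correct and essentially identical in substance; your extra verification that $Z(M)=\ker(p|_M)$ via $Z(G)=1$ is a point the paper leaves implicit.
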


\begin{proof}
Let $\mathbb{H}$ be a maximal $K$-torus of $\mathbb G_{sc}$ which is
quasisplit over $K$, i.e. is contained in a $K$-defined Borel
subgroup. Further, let $\tilde{\mathbb Z} = Z(\mathbb G_{sc})$ be
the centre of $\mathbb G_{sc}$ (here we regard $\tilde{\mathbb Z}$
as a finite algebraic subgroup of $\mathbb G_{sc}$ which is also
defined over $K$ \cite{Sp}). We also identify $\tilde{\mathbb Z}$
with an algebraic subgroup of $\mathbb{H}$. Consider the embedding
$\mathfrak{i}\colon\tilde{\mathbb Z}\hookrightarrow \mathbb{H}\times
\mathbb{G}_{\text{\rm{sc}}}$ given by $\mathfrak{i}(z) = (z,
z^{-1})$. The image $\mathfrak{i}(\tilde{\mathbb Z})$ will also be
denoted by $\tilde{\mathbb Z}$.

Define the reductive group $\mathfrak{G} := (\mathbb{H}\times
\mathbb{G}_{\text{\rm{sc}}})/ \tilde{\mathbb{Z}}$. Then
$Z(\mathfrak{G}) = \mathbb{H}$, $\mathfrak{G}^\prime  =
\mathbb{G}_{\text{\rm{sc}}}$  (here we identify the groups $
\mathbb{H}$ and $\mathbb{G}_{\text{\rm{sc}}}$ with their images in
$\mathfrak{G}$). Thus we have (i), (ii).

Note that there exists an automorphism $\tilde{\sigma}$ of
$\mathbb{G}_{\text{\rm{sc}}}$ which induces the given diagonal
automorphism $\sigma$ of $L$ (because $\sigma $ is defined by its
action on the root subgroups). All such automorphisms are inner in
$\mathbb{G}_{sc}$. Thus we may assume $\tilde{\sigma} \in
\mathbb{G}_{\text{\rm{sc}}}$. Let $F$ denote the Frobenius map
naturally acting on $\mathfrak{G}$ such that
$\mathbb{G}_{\text{\rm{sc}}}^F = \mathbb{G}_{\text{\rm{sc}}}(K)$,
$\mathbb{H}^F = \mathbb{H}(K).$ Then $F(\tilde{\sigma})$ and
$\tilde{\sigma}$ induce the same automorphism of $\mathfrak{G}$
(check this on root subgroups). Hence
$F(\tilde{\sigma})\tilde{\sigma}^{-1} \in Z(\mathfrak{G}) =
\mathbb{H}.$ By Lang's theorem, $H^1(F, \mathbb{H}) = 1$, therefore
we have $F(\tilde{\sigma})\tilde{\sigma}^{-1} = F(t^{-1})t$ for some
$t \in \mathbb{H}$. Hence $\tau = \tilde{\sigma}t \in
\mathfrak{G}(K).$ This gives (iii).
\end{proof}

\begin{lemma}
We have $G \leq \mathbb{G}(K).$
\end{lemma}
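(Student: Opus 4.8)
The plan is to push the diagonal automorphism $\tau$ into the adjoint group by means of the central isogeny implicit in Lemma \ref{prop:red}, and then to verify that nothing is lost upon restricting to $K$-points. First I would form the adjoint quotient $\pi\colon\mathfrak{G}\to\mathfrak{G}/Z(\mathfrak{G})$. Since $Z(\mathfrak{G})=\mathbb{H}$ and $\mathfrak{G}=Z(\mathfrak{G})\cdot\mathfrak{G}'$ (every class $[(h,x)]$ factors as $[(h,1)]\,[(1,x)]$), one computes $\mathfrak{G}/Z(\mathfrak{G})=\mathfrak{G}'/(\mathfrak{G}'\cap Z(\mathfrak{G}))=\mathbb{G}_{\mathrm{sc}}/\tilde{\mathbb{Z}}=\mathbb{G}_{\mathrm{ad}}=\mathbb{G}$, and the restriction of $\pi$ to $\mathfrak{G}'=\mathbb{G}_{\mathrm{sc}}$ is exactly the canonical central isogeny onto $\mathbb{G}$. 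As $\pi$ is defined over $K$, it yields a homomorphism $\pi\colon\mathfrak{G}(K)\to\mathbb{G}(K)$ with kernel $Z(\mathfrak{G})(K)=\mathbb{H}(K)$. I would then restrict $\pi$ to the finite subgroup $\Gamma:=\langle\tau,\mathfrak{G}'(K)\rangle\le\mathfrak{G}(K)$.

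The crux is to show that $\ker(\pi|_\Gamma)=Z(\Gamma)$. The inclusion $\ker(\pi|_\Gamma)=\Gamma\cap\mathbb{H}(K)\subseteq Z(\Gamma)$ is immediate, since $\mathbb{H}=Z(\mathfrak{G})$ is central in $\mathfrak{G}$. For the reverse inclusion, take $z\in Z(\Gamma)$; then $z$ centralizes $\mathfrak{G}'(K)=\mathbb{G}_{\mathrm{sc}}(K)$, so $\pi(z)$ centralizes $\pi(\mathbb{G}_{\mathrm{sc}}(K))$. Using condition (ii) of Lemma \ref{prop:red} together with the standard identification $\ker(\pi|_{\mathbb{G}_{\mathrm{sc}}(K)})=\tilde{\mathbb{Z}}^F=Z(\mathbb{G}_{\mathrm{sc}}(K))$, this image is $\mathbb{G}_{\mathrm{sc}}(K)/Z(\mathbb{G}_{\mathrm{sc}}(K))\cong L$, realized inside $\mathbb{G}(K)=\mathbb{G}_{\mathrm{ad}}(K)$ as the inner automorphism group $\mathrm{Inn}(L)$. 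Because $L$ is simple we have $Z(L)=1$, whence $C_{\Aut(L)}(\mathrm{Inn}(L))=1$ and in particular $C_{\mathbb{G}(K)}(L)=1$; this forces $\pi(z)=1$, i.e. $z\in\ker(\pi|_\Gamma)$.

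With $\ker(\pi|_\Gamma)=Z(\Gamma)$ in hand, $\pi$ induces an injection $\Gamma/Z(\Gamma)\hookrightarrow\mathbb{G}(K)$. By Lemma \ref{prop:red}(iii) one has $\Gamma/Z(\Gamma)\cong G$, so $G$ embeds into $\mathbb{G}(K)$, which is precisely the assertion $G\le\mathbb{G}(K)$. The main obstacle is the second inclusion of the crux step: identifying $\pi(\mathbb{G}_{\mathrm{sc}}(K))$ with $\mathrm{Inn}(L)$ and invoking the triviality of the centralizer of the inner automorphism group in $\Aut(L)$, which is exactly where simplicity of $L$ (hence $Z(L)=1$) is used. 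Everything else is formal manipulation with the central isogeny and with $K$-points. I should note that a more conceptual route reaches the same conclusion directly: the reduction preceding Lemma \ref{prop:red} shows $G=\langle\sigma,L\rangle$ with $\sigma$ a diagonal automorphism, so $G$ lies in the group of inner-diagonal automorphisms of $L$, which is $\mathbb{G}_{\mathrm{ad}}(K)=\mathbb{G}(K)$; the argument above merely makes this embedding explicit through the concrete model $\mathfrak{G}$ and the element $\tau$.
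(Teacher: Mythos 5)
Your argument is correct and follows essentially the same route as the paper's: the paper likewise passes through the natural quotient map $\theta\colon\mathfrak{G}\to\mathfrak{G}/Z(\mathfrak{G})=\mathbb{G}$, observes $\theta(\mathfrak{G}(K))\leq\mathbb{G}(K)$, and invokes Lemma \ref{prop:red} to conclude $G=\langle\sigma,L\rangle\leq\theta(\mathfrak{G}(K))$. Your additional verification that $\ker(\pi|_\Gamma)=Z(\Gamma)$ (via $C_{\Aut(L)}(\mathrm{Inn}(L))=1$) merely makes explicit a step the paper leaves implicit in citing Lemma \ref{prop:red}(iii).
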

\begin{proof}
The quotient $\mathfrak{G}/Z(\mathfrak{G})$ coincides with the
adjoint group $\mathbb{G}$. Let $\theta\colon
\mathfrak{G}\rightarrow \mathbb{G}$ be the natural homomorphism of
algebraic groups. We have $\theta(\mathfrak{G}(K))\leq
\mathbb{G}(K).$ Lemma \ref{prop:red} implies $G = \langle \sigma,
L\rangle \leq \theta(\mathfrak{G}(K))\leq\mathbb{G}(K).$
\end{proof}

Let $\AGL_n$, $\ASL_n$ be the algebraic groups such that
${\AGL_n}(E) = GL_n(E)$, ${\ASL_n}(E) = SL_n(E)$ for every field
$E$. Further, let $K = \mathbb F_q$, let $\overline{K}$ be an
algebraic closure of $K$, and let $\Gal (\overline{K}/K) = \langle
\tau \rangle$ where $\tau(\alpha) = \alpha^q$ for every $\alpha \in
\overline{K}$. Denote by $\sigma$ the automorphism of
$GL_n(\overline{K}),\,\,\,SL_n(\overline{K})$ given by the formula
$\sigma (A) = (A^{-1})^{t}$. The automorphism $\tau$ of
$\overline{K}$ also defines the automorphism of the matrix groups
$GL_n(\overline{K})$, $SL_n(\overline{K})$ which will be denoted by
the same symbol $\tau$. For every natural number $m$ we denote by
$F_m$ the Frobenius maps:
\[
F_m = (\sigma \tau)^m \colon
GL_n(\overline{K})\rightarrow GL_n(\overline{K}),\,\,\,
SL_n(\overline{K})\rightarrow SL_n(\overline{K}).
\]
Denote by $\AU_n(q)$, $\ASU_n(q)$ the quasisplit forms of $\AGL_n$,
$\ASL_n$ defined over  $K = \mathbb F_q$ such that
\[
\AU_n(q)(\mathbb F_{q^m}) = GL_n(\overline{K})^{F_m},\,\,\ASU_n (q)(\mathbb
F_{q^m}) =
 SL_n(\overline{K})^{F_m}.
\]

\begin{lemma} \label{lem:GLU}
Suppose that $L = A_{n-1}(q)$ or $L = {}^2A_{n-1}(q^2)$. Then in
Lemma $\ref{prop:red}$ one can take $\mathfrak{G} = \AGL_{n}$ or
$\mathfrak{G} = \AU_{n}(q)$, respectively.
\end{lemma}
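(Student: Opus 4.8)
The plan is to verify directly that the explicit groups $\mathfrak{G}=\AGL_{n}$ and $\mathfrak{G}=\AU_{n}(q)$ satisfy conditions (i)--(iii) of Lemma~\ref{prop:red}; I treat the linear case $L=A_{n-1}(q)$ in detail, the unitary case being entirely parallel with $\AGL_{n}$, $SL_{n}$, $GL_{n}$, $PGL_{n}$ replaced throughout by $\AU_{n}(q)$, $SU_{n}$, $GU_{n}$, $PGU_{n}$. Conditions (i) and (ii) are immediate from the structure of the general linear group: the derived group of $\AGL_{n}$ is $\ASL_{n}$, which is simply connected, and the centre is the group of scalars, a one-dimensional torus, which gives (i); passing to $K$-points, $\mathfrak{G}'(K)=SL_{n}(q)$ and $\mathfrak{G}'(K)/Z(\mathfrak{G}'(K))=PSL_{n}(q)=A_{n-1}(q)=L$, which gives (ii). In the unitary case the centre of $\AU_{n}(q)$ is the norm-one form of $\Gm$, still a one-dimensional torus, $\ASU_{n}(q)$ is still simply connected (simple connectedness is a geometric property, invariant under passage to forms), and $SU_{n}(q)/Z(SU_{n}(q))=PSU_{n}(q)={}^2A_{n-1}(q^2)=L$.

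The substance is condition (iii), which I would establish by a lifting argument. By the lemma immediately preceding the statement we have $G=\langle\sigma,L\rangle\leq\mathbb{G}(K)=PGL_{n}(q)$, where $\mathbb{G}$ is the adjoint group and $\sigma$ is a diagonal automorphism regarded as an element of $PGL_{n}(q)$. Let $\theta\colon\mathfrak{G}\to\mathbb{G}=\mathfrak{G}/Z(\mathfrak{G})$ be the canonical projection, whose kernel is the central torus $Z(\mathfrak{G})$. Since this kernel is a connected torus, Lang's theorem yields $H^1(F,Z(\mathfrak{G}))=1$, so the induced map on $K$-points $\theta\colon GL_{n}(q)\to PGL_{n}(q)$ is surjective; I then choose $\tau\in GL_{n}(q)$ with $\theta(\tau)=\sigma$ and set $M:=\langle\tau,SL_{n}(q)\rangle\leq GL_{n}(q)$. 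Because $SL_{n}(q)$ acts absolutely irreducibly on the natural module $\overline{K}^{n}$, its centralizer in $GL_{n}(\overline{K})$ consists of scalars; as $SL_{n}(q)\trianglelefteq M$, any central element of $M$ centralizes $SL_{n}(q)$ and is therefore scalar, whence $Z(M)=M\cap Z(GL_{n}(q))=M\cap\ker\theta$. Consequently
\[
M/Z(M)\cong\theta(M)=\langle\theta(\tau),\theta(SL_{n}(q))\rangle=\langle\sigma,L\rangle=G,
\]
which is exactly (iii) for this choice of $\tau$.

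The main obstacle is making (iii) rigorous, and it rests on two points that must be checked with care. First, the surjectivity of $\theta$ on $K$-points uses the vanishing of $H^1(F,Z(\mathfrak{G}))$, which is where the reductive structure of $\mathfrak{G}$ (connected central torus) enters; this is the same Lang's-theorem mechanism already used in the proof of Lemma~\ref{prop:red}. Second, the identification $Z(M)=M\cap Z(GL_{n}(q))$ depends on absolute irreducibility of the finite group $SL_{n}(q)$ (respectively $SU_{n}(q)$) on the natural module, which holds in all cases relevant here, the degenerate small groups having already been removed by Proposition~\ref{prop:small}. Granting these two facts the computation of $M/Z(M)$ is formal, and the unitary case differs only in that one works throughout with the twisted form $\AU_{n}(q)$ and its nonsplit centre.
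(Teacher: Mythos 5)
Your proof is correct and follows essentially the same route as the paper: the paper declares the linear case obvious, checks (i) and (ii) for $\AU_n(q)$, and then simply observes that (iii) holds for \emph{any} reductive group satisfying (i) and (ii) by the Lang's-theorem argument already given in the proof of Lemma~\ref{prop:red}. Your explicit lifting of $\sigma$ through $\theta\colon\mathfrak{G}(K)\to\mathbb{G}(K)$ (surjective because the kernel $Z(\mathfrak{G})$ is a connected torus) and the identification $Z(M)=M\cap\ker\theta$ via absolute irreducibility merely spell out the details that the paper leaves implicit.
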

\begin{proof}
For the case $L = A_{n-1}(q)$ the statement is obvious. Let $L =
{}^2A_{n-1}(q^2)$. Then $\mathbb G_{sc} = {\ASU_n (q)} =
\AU_n(q)^\prime$ and the centre of ${\ASU_n (q)}$ is a
one-dimensional anisotropic torus over $K = \mathbb F_q$. Thus we
have (i) and (ii). Note that the proof of Lemma \ref{prop:red}
implies that (iii) holds for every reductive group $\mathfrak{G}$
satisfying (i) and (ii).
\end{proof}

\subsection{} \label{subsec5.2}
The notation introduced in the next paragraph refers to the
reductive group $\mathfrak G$ and the semisimple group $\mathbb G$
as in Section \ref{subsec5.1}.

For the Chevalley groups $\mathfrak{G}(K)$ and $\mathbb G(K)$ denote
$T_\mathfrak{G} = \mathbb T_\mathfrak{G}(K)$ and  $T = \mathbb T(K)$
where $\mathbb T_\mathfrak{G}$ and $\mathbb T$ are maximal
quasisplit tori of $\mathfrak{G}$ and $\mathbb G$.  We will assume
that $\tau \in T_\mathfrak{G}$ and $\sigma \in T$. Further, for the
Chevalley group $\mathfrak{G}(K)$ (or $\mathbb{G}(K)$) there exists
the root system $\Phi$ corresponding to $T_\mathfrak{G}$ (or $T$)
which either coincides with the root system $R$ of $\mathbb G$ or is
obtained from $R$ by twisting \cite{Ca1}. We denote by
$\mathfrak{U}_\alpha$ and $U_\alpha$ the root subgroups of
$\mathfrak{G}(K)$ and $\mathbb{G}(K)$ corresponding to $\alpha \in
\Phi$. We set $U_\mathfrak{G} = \langle \mathfrak{U}_\alpha \mid
\alpha \in \Phi^+\rangle,\,\,U = \langle U_\alpha \mid \alpha \in
\Phi^+\rangle$. The groups $B_\mathfrak{G} =
T_\mathfrak{G}U_\mathfrak{G}$ and $B = TU$ (as well as all their
conjugates) are called {\it Borel subgroups} of $\mathfrak{G}(K)$
and $\mathbb{G}(K)$. Any subgroup of $\mathfrak{G}(K)$ or
$\mathbb{G}(K)$ which contains a Borel subgroup is called a
parabolic subgroup.

Fix a simple root system $\Pi$ generating $\Phi$. For $\Pi^\prime
\subset \Pi$ denote $W_{\Pi^\prime} = \langle w_\alpha \,\,\mid
\alpha \in \Pi^\prime\rangle$. Then $P_{\Pi^\prime} =
BW_{\Pi^\prime}B$ is a standard parabolic subgroup of
$\mathbb{G}(K)$. Note that every parabolic subgroup of $\mathbb
G(K)$ is conjugate to a standard parabolic subgroup by an element of
$[\mathbb{G}(K),\mathbb{G}(K)]$.

\begin{lemma} \label{lem:parab}
The element $g \in G = \langle \sigma, L\rangle \leq \mathbb G(K)$
does not belong to any proper parabolic subgroup of $\mathbb{G}(K)$.
\end{lemma}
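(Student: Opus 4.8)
The plan is to argue by contradiction. Suppose that $g$ lies in a proper parabolic subgroup $P$ of $\mathbb{G}(K)$. Since every parabolic of $\mathbb{G}(K)$ is conjugate to a standard one by an element of $[\mathbb{G}(K),\mathbb{G}(K)]=L$, and since the hypothesis on $g$ is invariant under replacing $g$ by an $L$-conjugate (as $x$ ranges over $L$ so does $axa^{-1}$), I may assume that $P=P_{\Pi'}$ is a standard parabolic with Levi decomposition $P=L_PU_P$. First I would reduce to the case where $g$ is semisimple: writing the Jordan decomposition $g=su$ and using that $\ell$ is prime, either $g=s$ is semisimple (when $\ell\ne p=\ch K$) or $g=u$ is unipotent (when $\ell=p$). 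In the unipotent case one produces a conjugate $xgx^{-1}$ with which $g$ generates a non-solvable rank-one subsystem subgroup, once the small groups of Proposition \ref{prop:small} are excluded; this contradicts the hypothesis, so $g$ is semisimple. As semisimple elements of $P$ are $U_P$-conjugate into $L_P$ and $U_P\subseteq L$, I may further assume $g\in L_P$.

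Next I would analyse the image $\bar g$ of $g$ under the projection $\pi\colon P\to L_P\to \bar L_P:=L_P/Z(L_P)$, whose target is a direct product $S_1\times\cdots\times S_m$ of simple groups of Lie type over $K$, each of Lie rank strictly smaller than that of $L$. Since $g$ is inner-diagonal (Proposition \ref{prop:aut_prime}), $\bar g$ induces inner-diagonal automorphisms on the factors it stabilizes. If $\bar g\ne 1$, then either $\bar g$ permutes the factors nontrivially, in which case the permutation argument of Section \ref{sec:strat} yields an $x$ with $\langle g,xgx^{-1}\rangle$ non-solvable; or $\bar g$ acts nontrivially on some factor $S_i$. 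Restricting to this factor, the almost simple group $\langle\bar g,S_i\rangle\le\Aut(S_i)$ has socle of order $<|L|$, so it satisfies $\mathbf{(NS)}$ by minimality (condition (c) of $\mathbf{MC}$) together with Theorem \ref{prop:rank1} and Proposition \ref{prop:small}, and by Section \ref{sec:F4} in the remaining type ${}^2F_4$. Thus there is $\bar x$ with $\langle\bar g,\bar x\bar g\bar x^{-1}\rangle$ non-solvable; lifting $\bar x$ to $x\in P\cap L$, the group $\langle g,xgx^{-1}\rangle$ surjects onto a non-solvable group and is therefore non-solvable, a contradiction.

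This leaves the central case $\bar g=1$, i.e. $g\in Z(L_P)$, which I expect to be the main obstacle. Here $g$ centralizes the non-solvable derived group $[L_P,L_P]$, induces no nontrivial automorphism of the Levi, and the previous reduction is unavailable. The key observation is that $g$ must nevertheless act nontrivially on the unipotent radical: if $g$ acted trivially on each of $U_P$, $L_P$, and $U_P^-$, it would be central in $\mathbb{G}$, hence trivial, since $\mathbb{G}$ is adjoint. Decomposing the $K$-rational group $U_P$ under the action of $Z(L_P)^\circ$ into its $K$-rational relative-root subgroups, there is a relative root $[\alpha]$ on which $g$ acts by a nontrivial weight. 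Then $M_{[\alpha]}:=\langle U_{[\alpha]},U_{[-\alpha]}\rangle$ is a $g$-stable rank-one $K$-subgroup contained in $L$, with $g\notin M_{[\alpha]}$, on which $g$ induces an inner-diagonal automorphism of order exactly $\ell>3$. Applying the rank-one result of Theorem \ref{prop:rank1} to $\langle g,M_{[\alpha]}\rangle$ produces $x\in M_{[\alpha]}\subseteq L$ with $\langle g,xgx^{-1}\rangle$ non-solvable, the final contradiction.

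The delicate points I would spend most care on are the unipotent reduction and, above all, this central case: one must verify that the relative-root subgroup $M_{[\alpha]}$ is genuinely defined over $K$ and non-solvable (so that the small cases excluded by Proposition \ref{prop:small} are indeed needed), that $g$ induces on $M_{[\alpha]}/Z(M_{[\alpha]})$ an automorphism of order exactly $\ell$ of inner-diagonal type so that Theorem \ref{prop:rank1} is applicable, and that the conjugating element can be chosen inside $L$. Once these are checked, all cases close and $g$ belongs to no proper parabolic subgroup of $\mathbb{G}(K)$.
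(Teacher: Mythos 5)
Your overall architecture is the same as the paper's: reduce to a standard parabolic, exclude unipotent $g$ by passing to a rank-one group, invoke minimality (\textbf{MC}) when $g$ has a nontrivial image in a simple component of the Levi, and handle the remaining ``toral'' case by exhibiting a rank-one subgroup that $g$ normalizes but does not centralize. (The paper organizes the dichotomy as ``$g$ conjugate into $T$'' versus ``not'', rather than ``$\bar g=1$ in $L_P/Z(L_P)$'' versus ``not'', but this is the same split; your ``permutes the factors'' subcase never actually occurs, since an inner-diagonal element lying in the Levi preserves each simple component, and including it is harmless.)

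However, the execution of your central case contains a genuine error. If $g\in Z(L_P)$ and $[\alpha]$ is a relative root of $P$ with respect to $Z(L_P)^\circ$, the group $M_{[\alpha]}=\langle U_{[\alpha]},U_{[-\alpha]}\rangle$ has \emph{relative} rank one but is in general not a group of Lie rank one in the sense of Theorem \ref{prop:rank1}: for a maximal parabolic $P$ of $PGL_n$ the relative root system is $A_1$ (or $BC_1$), $U_{[\alpha]}$ is the whole unipotent radical $U_P$, and $\langle U_P,U_P^{-}\rangle=PSL_n(K)$. So Theorem \ref{prop:rank1} cannot be applied to $M_{[\alpha]}$, and minimality does not rescue the step either, since $M_{[\alpha]}$ need not be smaller than $L$. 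The repair is exactly what the paper does: since $g\in Z(L_P)\leq T$ and $\mathbb G$ is adjoint, there is an absolute root $\alpha$ (which may be taken simple) with $\alpha(g)\neq 1$, and then $G_\alpha=\langle U_\alpha,U_{-\alpha}\rangle$ is a genuine rank-one subgroup normalized but not centralized by $g$; Theorem \ref{prop:rank1} then yields the contradiction. (Both your argument and the paper's still carry the caveat you flag yourself, namely that $G_\alpha$ modulo its centre must be a nonsolvable rank-one group admitting a nontrivial inner-diagonal automorphism of order $\ell>3$; this is where Proposition \ref{prop:small} and the smallness of split tori over tiny fields enter.) With this one replacement your proof coincides with the paper's.
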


\begin{proof}
Assume to the contrary that $g\in P$ where $P\leq \mathbb{G}(K)$ is
a proper parabolic subgroup of $\mathbb{G}(K)$.  There exists $\gamma \in
L = [\mathbb{G}(K), \mathbb{G}(K)]$ such that $\gamma
P_{\Pi^\prime}\gamma^{-1} = P$ for some $\Pi^\prime \subset \Pi$.
Since $\gamma \in L$ and since we may consider the element $g\in G =
\langle \sigma, L\rangle$ up to conjugacy in $G$, we may assume that
$P$ is the standard parabolic subgroup $P_{\Pi^\prime}$ for some
$\Pi^\prime \subset \Pi$.

Let us show that $g$ is not a unipotent element. Indeed, if $g$ is a
unipotent element, then by conjugation with some appropriate element
of $L$ we can get an element $u \in U$ having a nontrivial factor
$u_\alpha$ for some $\alpha \in \Pi$. Again we may assume $g = u =
u_\alpha v, \,\,u_\alpha\ne 1, v \in U$, and $g\in P =
P_{\Pi^\prime}$ for $\Pi^\prime = \{\alpha\}$. The image $g_1$ of
$g$ in the quotient  $L_1 = P/Z(P)R_u(P)$  is an element of prime
order $\ell > 3$. Hence $L_1$ is an almost simple group of Lie type of rank
one which does not satisfy {\bf (NS)}. This contradicts Theorem
\ref{prop:rank1}.

Let us show that $\delta g \delta^{-1}\notin T$ for every $\delta
\in \mathbb{G}(K)$. Suppose $\delta g \delta^{-1}\in  T$ for some
$\delta \in \mathbb{G}(K)$. Then we may assume $g \in T$ (the same
arguments as above). One can then find a group $G_\alpha = \langle
U_{\pm \alpha}\rangle$, $\alpha \in \Pi$, which is normalized but
not centralized by $g$. Then again we have a contradiction with
Theorem \ref{prop:rank1}.

Let now $g \in P = P_{\Pi^\prime}$, $g \notin T$, $g \notin R_u(P)$.
Then the image $g_1$ of $g$ in $ L_1 = P/Z(P)R_u(P)$ is not trivial.
Further, there exists a simple component $L_2$ of $L_1$ which is an
almost simple group of Lie type such that the component $g_2$ of
$g_1$ in $L_2$ is not trivial. Obviously, $L_3 = [L_2, L_2]$ is a
finite simple group of Lie type $\ne {}^2F_4(q^2)$ and $\vert L\vert
> \vert L_3\vert$. Since $L_2$ is a simple component of $ L_1 =
P/Z(P)R_u(P)$, the image $g_2$ of $g_1$ can be represented in the
form $g_2 = \sigma^\prime g_3$ where $g_3 \in L_3$ and
$\sigma^\prime \in L_2$ induces a diagonal automorphism of $L_3 =
[L_2, L_2]$. Then the group $G_1 = \langle \sigma^\prime,
L_3\rangle$ does not satisfy {\bf (NS)}. Hence we have a
contradiction with {\bf (MC)}.
\end{proof}

\begin{lemma} \label{lem:reg}
The element $g \in G = \langle \sigma, L\rangle \leq \mathbb
G(K)\leq  \mathbb G$ is a regular semisimple element of $\mathbb G$.
\end{lemma}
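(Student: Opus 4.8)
The plan is to first pin down the Jordan type of $g$, then assume $g$ is not regular and manufacture, from its noncentrality, a proper $g$-stable rational reductive subgroup on which $g$ acts as a nontrivial inner-diagonal automorphism; the rank-one theorem \ref{prop:rank1} or the minimality hypothesis \textbf{MC} will then supply a nonsolvable $\langle g,xgx\min\rangle$, contradicting the standing assumption on $g$. To begin, $g$ has prime order $\ell$ and, by the first part of the proof of Lemma \ref{lem:parab}, $g$ is not unipotent. A nontrivial element of order a power of $p=\ch K$ is unipotent, so $\ell\neq p$; and an element of order prime to $p$ has trivial unipotent part in its Jordan decomposition, so $g$ is semisimple. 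Since $\mathbb G$ is adjoint and $g\neq 1$, the element $g$ is noncentral. I fix an $F$-stable maximal torus $\mathbb S\ni g$ with root system $R$, so that regularity is precisely the condition $\alpha(g)\neq 1$ for every $\alpha\in R$.

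Now suppose $g$ is not regular, and put $R_0=\{\alpha\in R:\alpha(g)=1\}$, a nonempty $F$-stable closed subsystem whose associated subgroup is $M:=C_{\mathbb G}(g)^\circ$, an $F$-stable connected reductive subgroup of maximal rank that is not a torus. The first use of Lemma \ref{lem:parab} is to constrain the centre of $M$: if the central torus $Z(M)^\circ$ contained a nontrivial $K$-split subtorus $\mathbb S_1$, then $C_{\mathbb G}(\mathbb S_1)$ would be the Levi factor of a proper $K$-parabolic $P$, and $g\in C_{\mathbb G}(\mathbb S_1)\leq P$ because $g$ centralizes $Z(M)\supseteq\mathbb S_1$, contradicting Lemma \ref{lem:parab}. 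Hence $Z(M)^\circ$ is $K$-anisotropic and, $\mathbb G$ being adjoint, $\mathbb M:=[M,M]$ is a nontrivial $F$-stable semisimple group centralized by $g$; in other words $g$ is forced into an isolated, fully nonsplit position.

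It remains to convert the noncentrality of $g$ into a nonsolvable pair of conjugates, and this is the step I expect to be the main obstacle. Since $g$ lies in $\mathbb S$, it normalizes every subsystem subgroup $\mathbb N_{R'}=\langle U_\alpha:\alpha\in R'\rangle$ attached to an $F$-stable closed subsystem $R'\subseteq R$, acting on it as an inner-diagonal automorphism whose order divides $\ell$ and is therefore $1$ or $\ell$, equal to $\ell$ as soon as $R'$ meets $R_1:=R\setminus R_0$. My plan is thus to find a \emph{proper} $F$-stable closed subsystem $R'\subsetneq R$ with $R'\cap R_1\neq\emptyset$: then $\mathbb N_{R'}$ is a proper rational $g$-stable reductive group on whose simple factor $g$ induces an automorphism of prime order $\ell>3$, and Theorem \ref{prop:rank1} (rank-one factor) or hypothesis \textbf{MC} (higher-rank factor, via the Levi reduction already carried out in the proof of Lemma \ref{lem:parab}) yields $x\in L$ with $\langle g,xgx\min\rangle$ nonsolvable. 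The delicate configurations are those where $F$ acts so transitively that the $F$-orbit of every root outside $R_0$ already generates all of $R$, so that no such proper $R'$ exists; these are the Coxeter-type tori, and there I would argue separately that a semisimple element of prime order $\ell>3$ lying in such a torus with $Z(M)^\circ$ anisotropic must have $R_0=\emptyset$, i.e. must be regular. Combining the two cases contradicts the hypothesis that $g$ is nonregular and completes the proof.
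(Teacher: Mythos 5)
Your opening is sound and overlaps with the paper's proof: $g$ is semisimple because it has prime order and is not unipotent (Lemma \ref{lem:parab}), and if $g$ is not regular then $M=C_{\mathbb G}(g)^{0}$ is a connected reductive group of maximal rank that is not a torus; your observation that $Z(M)^{0}$ must be $K$-anisotropic (else $g$ would sit in a proper $K$-parabolic) is also correct. But the proof does not close. The entire weight rests on the final dichotomy: either there is a proper $F$-stable closed subsystem $R'\subsetneq R$ meeting $R_1=R\setminus R_0$, or the torus is of ``Coxeter type'' and then a non-regular element of prime order $\ell>3$ with anisotropic $Z(M)^{0}$ allegedly cannot exist. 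Neither branch is established. The existence of such an $R'$ is a genuinely type-dependent question about the twisted $F$-action on $R$ (the torus $\mathbb S$ containing $g$ is in general not quasisplit, so individual root subgroups $U_\alpha$ are not $F$-stable and one must work with $F$-orbits), and the Coxeter-type claim is simply asserted (``I would argue separately''); it is not obviously true without a case analysis, since e.g.\ in type $A_{n-1}$ an element of a Coxeter torus whose eigenvalue lies in a proper subfield is non-regular, and ruling this out requires actually using the anisotropy of $Z(M)^{0}$ in each type. Moreover, even in the first branch, feeding $\langle g,\mathbb N_{R'}(K)\rangle$ into {\bf (MC)} requires verifying that the induced automorphism of a simple factor is nontrivial and inner-diagonal and that the factor is not of type ${}^2F_4$; these are checkable but unaddressed. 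As written, the proof is a program, not an argument.

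The paper finishes in two lines from exactly the point you reached: since $K$ is finite, the non-toral connected reductive group $C^{0}_{\mathbb G}(g)$ is quasisplit, so it has a $K$-defined Borel subgroup whose unipotent radical is split over $K$ and hence has nontrivial $K$-points; this produces a nontrivial unipotent $u\in C_{\mathbb G(K)}(g)$. On the other hand, Lemma \ref{lem:parab} says $g$ lies in no proper parabolic subgroup of $\mathbb G(K)$, and by \cite[Proposition~6.4.5]{Ca2} the order of the centralizer of such an element is prime to $\ch K$ --- contradiction. No subsystem subgroups, no induction via {\bf (MC)}, and no case analysis on the type of the torus are needed. I recommend replacing your final step with this argument: the key fact you are missing is that over a finite field a connected reductive group that is not a torus always has nontrivial rational unipotent elements, which converts ``$C^{0}$ is not a torus'' directly into a contradiction with Lemma \ref{lem:parab}.
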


\begin{proof}
Since the order of $g$ is prime and $g$ is not unipotent, $g$ is
semisimple. Let $C_{\mathbb G}(g)$ be the centralizer of $g$ in $
\mathbb G$. This is a reductive subgroup of $\mathbb G$ \cite
[Theorem~3.5.3]{Ca2}. Suppose that $g$ is not regular. Then the
identity component $C^0_{\mathbb G}(g)$ is not a torus. Since $K$ is
a finite field, there exists a $K$-defined Borel subgroup $\mathbb
B_g$ of $C^0_{\mathbb G}(g)$. Again, the unipotent radical
$R_u(\mathbb B_g)$ is also defined and split over $K$
\cite[14.4.5]{Sp}. Hence $R_u(\mathbb B_g)(K) \ne 1$. Since
$R_u(\mathbb B_g)(K)\leq C_{\mathbb G}(g)(K) \leq \mathbb G(K)$, one
can find a nontrivial unipotent element $u \in C_{\mathbb G(K)}(g)$.
However, by Lemma \ref{lem:parab} the element $g$ does not lie in
any proper parabolic subgroup $P\leq \mathbb G (K)$, and therefore
the characteristic of $K$ does not divide the order of $C_{\mathbb
G(K)}(g)$ \cite[Proposition~6.4.5]{Ca2}. Contradiction.
\end{proof}

\begin{lemma} \label{lem:nonnorm}
The element $g$ does not normalize any unipotent subgroup $V$ of
$\mathbb G(K)$.
\end{lemma}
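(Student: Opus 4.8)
The plan is to derive a contradiction with Lemma~\ref{lem:parab} by means of the Borel--Tits theorem. Suppose, towards a contradiction, that $g$ normalizes some nontrivial unipotent subgroup $V\leq \mathbb G(K)$. Since every element of $V$ is unipotent, $V$ is a $p$-group, where $p=\operatorname{char}K$. The crucial input is that the normalizer of a nontrivial unipotent subgroup of a finite group of Lie type is always contained in a proper parabolic subgroup defined over $K$. Granting this, we have $g\in N_{\mathbb G(K)}(V)\leq P$ for some proper parabolic subgroup $P$ of $\mathbb G(K)$, contradicting Lemma~\ref{lem:parab}. This finishes the proof.

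First I would make precise the form of Borel--Tits that is needed. The algebraic version asserts that for a nontrivial closed unipotent subgroup $\mathbb U$ of a connected reductive group there is a proper parabolic $\mathbb P$ with $\mathbb U\leq R_u(\mathbb P)$ and $N_{\mathbb G}(\mathbb U)\leq \mathbb P$; applied over the finite field $K=\mathbb F_q$ (equivalently, to the $F$-fixed structure), it produces an $F$-stable, i.e.\ $K$-defined, proper parabolic subgroup $\mathbb P$ such that $N_{\mathbb G(K)}(V)\leq \mathbb P^F=\mathbb P\cap \mathbb G(K)$, which is exactly a proper parabolic subgroup of the finite group $\mathbb G(K)$ in the sense of Section~\ref{subsec5.2}. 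I would simply invoke this finite form of the Borel--Tits theorem (see the algebraic-groups references cited above, e.g.\ \cite{Sp}), as it is standard for finite groups of Lie type.

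The main point to get right---and the only real obstacle---is the distinction between parabolic subgroups of the ambient algebraic group $\mathbb G$ and those of the finite group $\mathbb G(K)$. Over the algebraic closure every semisimple element, in particular our regular semisimple $g$ (Lemma~\ref{lem:reg}), lies in some Borel subgroup, so the purely algebraic form of Borel--Tits yields no contradiction by itself. What makes the argument work is that, because $V$ consists of $K$-rational (that is, $F$-fixed) unipotent elements, the parabolic delivered by Borel--Tits can be chosen $F$-stable, hence is a genuine proper parabolic subgroup of $\mathbb G(K)$---precisely the class of subgroups excluded by Lemma~\ref{lem:parab}. Once this $F$-rationality is in place, the contradiction is immediate and the lemma follows; no case analysis or computation is required.
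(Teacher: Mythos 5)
Your argument is correct and is essentially the paper's own proof: both reduce to Lemma~\ref{lem:parab} via the Borel--Tits theorem, with the key point being that the parabolic containing $N_{\mathbb G}(V)$ is defined over $K$ (the paper makes this explicit by writing out the canonical sequence $N_i = N_{\mathbb G}(V_{i-1})$, $V_i = V_{i-1}R_u(N_i)$, whose canonicity gives the $F$-stability you invoke). No substantive difference.
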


\begin{proof}
Assume to the contrary that $gVg^{-1} = V$ for some unipotent
subgroup $V\leq \mathbb G$. Then $V$ is a closed subgroup of
$\mathbb G$ (because it is finite). The following construction is
due to Borel and Tits \cite{BT}. Consider the sequence of subgroups
$$N_1 := N_{\mathbb G}(V), V_1 := VR_u(N_1), \dots, N_i =
N_{\mathbb G}(V_{i-1}), V_i := V_{i-1}R_u(N_i)$$ in $\mathbb G$. All
groups here are defined over $K$. Moreover, since $V$ is in a Borel
subgroup of $\mathbb G$ (indeed, $V$ belongs to a $p$-Sylow subgroup
of $G$ which is conjugate to $U\le B$), the last term $N_k = P$ is a
parabolic subgroup of $\mathbb G$ containing $N_{{\mathbb G}}(V)$,
and therefore $g \in P$ (see \cite[30.3]{Hu1}). Since $g \in \mathbb
G(K)$ and $P$ is a parabolic subgroup defined over $K$, we have $g
\in P(K)$ where $P(K)$ is a parabolic subgroup of $\mathbb G(K)$.
This is a contradiction with Lemma \ref{lem:parab}.
\end{proof}

\subsection{} \label{subsec5.3}
Recall that a Coxeter element $w_c$ of the Weyl group $W = W(\Phi)$
with respect to $\Pi$ is a product (taken in any order) of the
reflections $w_\alpha$, $\alpha \in \Pi$, where each reflection
occurs exactly once.

Let now $\mathfrak{g}$ be a preimage of $g$ in $\mathfrak{G}(K)$,
see Lemma \ref{prop:red}(iii). Since $g$ is a semisimple regular
element of $\mathbb G$ (Lemma \ref{lem:reg}), the element
$\mathfrak{g}$ is also semisimple and regular in $\mathfrak{G}$. By
\cite[\S 9]{St2} (see also \cite{GoS}), for every Coxeter element
$\bf w_c$ of $\mathfrak{G}(K)$ there exists $x \in \mathfrak{G}(K)$
such that
\[
x\mathfrak{g}x^{-1} = {\bf u}\dot{\bf w}_{c}
\]
where  ${\bf u}\in U_\mathfrak{G}$ (here $\dot {\bf w}_{c}$ is any
preimage of ${\bf w}_{c}$). We have $x = hy$ where $h \in
T_\mathfrak{G}$ and $y \in \mathfrak{G}^\prime(K)$. Then
\[
y \mathfrak{g}y^{-1} = {\bf u^\prime}\dot{\bf  w}_{c}
\]
Thus we can put the element $\mathfrak{g}$ in the Coxeter cell
$B_\mathfrak{G}\dot{\bf  w}_{c}B_\mathfrak{G}$ by conjugation with
some element from $\mathfrak{G}^\prime(K)$. So we may assume
$\mathfrak{g} \in B_\mathfrak{G}\dot{\bf w}_{c}B_\mathfrak{G}$.
Therefore we may assume $g \in B\dot{ w}_{c}B$ and moreover
\begin{equation}
g = u\dot w_c \label{eq:Cox}
\end{equation}
for some $u \in U$.

In \cite[Section 5]{GGKP2}, it was proved that for an element $g$ of
form (\ref{eq:Cox}) with an appropriate Coxeter element $w_c$,
there is $x \in L$ such that $[g,x]=u\in U$. With this choice of
$x$, put
        \[H = \langle g, x gx\min\rangle.\]
By our assumptions, $H$ is a solvable group. Since $g, u \in H$,
there is a Hall subgroup $H_{p\ell}$, where $p = \char (K)$, such
that $g\in H_{p\ell}$. Let $A$ be the maximal abelian normal
subgroup of $H_{p\ell}$, and  let $A_p$ be the $p$-Sylow subgroup of
$A$. Suppose that $A_p \ne 1$. Then $A_p$ is normalized by $g$. This
contradicts Lemma \ref{lem:nonnorm}. Hence $A_p = 1$. Then $\vert
A\vert  = \ell^s$. Let
\[
A_{[\ell ]} = \{a\in A\,\,\mid\,\,\,a^l = 1\},\,\,\,C_{A_{[\ell
]}}(g) = \{a \in A_{[\ell]}\,\,\,\mid\,\,\,gag^{-1} = a\}.
\]
We have $C_{A_{[\ell ]}}(g) \neq 1$ since any operator of order
$\ell$ acting on a vector space over the field $\mathbb F_{\ell}$ is
unipotent and hence has a nontrivial fixed point.

We have $C_{{\mathbb G}}(g) \leq N_{{\mathbb G}}(\tilde{\mathbb T})$
for some maximal torus $\tilde{\mathbb T}$ of ${\mathbb G}$ (recall
that $g$ is a regular element of ${\mathbb G}$).

Consider the group $C_{\mathbb G(K)}(g)_{[\ell ]}$ generated by all
elements of order $\ell$ in $C_{\mathbb G(K)}(g)$. Clearly,
$C_{A_{[\ell ]}}(g)\leq C_{{\mathbb G}(K)}(g)_{[\ell ]}$. Consider
three separate cases.

\medskip

{\bf Case 1.} Suppose that $C_{\mathbb G(K)}(g)_{[\ell ]} = \langle
g \rangle$.

\medskip

Then $C_{A_{[\ell ]}}(g) = C_{{\mathbb G}(K)}(g)_{[\ell ]} = \langle
g \rangle $. Since $C_{A_{[\ell ]}}(g) = \langle g \rangle $ and $A$
is abelian, we have $A_{[\ell ]}= C_{A_{[\ell ]}}(g)$. Therefore
$\langle g \rangle = A_{[\ell ]}$ is an $H_{p\ell}$-invariant
subgroup. Recall that $[g,x]=u\in H$ is unipotent. Hence there
exists a unipotent element $v \in H_{p\ell}$.  We have
\[
vgv^{-1}= g^r,  1 < r < \ell
\]
(indeed, $g$ is regular and therefore $r \ne 1$, otherwise $g$ would
commute with a unipotent element). Hence $g^{r-1} = [v, g]\in [H,
H]$ and therefore $g \in [H, H]$. On the other hand,  the generators
of the solvable group $H = \langle g, x gx^{-1}\rangle$ are not in
$[H, H]$, so  $g\notin [H, H]$. Contradiction.

\medskip

{\bf Case 2.} Suppose that $\langle g \rangle\times \langle a
\rangle \leq C_{{\mathbb G}(K)}(g)_{[\ell ]}$ for some $a\in
\tilde{\mathbb T}(K)$.

\medskip

Let $\mathfrak{L}({\mathbb G})$ and $\mathfrak{L}(\tilde{\mathbb
T})$ be the Lie algebras of ${\mathbb G}$ and $\tilde{\mathbb T}$,
respectively. Then we have a subgroup of type $\ell\times\ell$ in
$\tilde{\mathbb T}$ which acts by conjugation on the linear space
$\overline{\mathfrak{L}}=\mathfrak{L}({\mathbb G})/
\mathfrak{L}(\tilde{\mathbb T})$ defined over a field of
characteristic $p$. Since $q$ and $\ell$ are coprime, by Maschke's
theorem this action is diagonalizable. This implies that there
exists $b \in \langle a \rangle \times \langle g\rangle$ stabilizing
a nonzero vector from $\overline{\mathfrak{L}}$. Then $C_{{\mathbb
G}}(b)$ is a $K$-defined reductive subgroup of $\mathbb G$ of
nonzero semisimple rank because the Lie algebra of $C_{{\mathbb
G}}(b)$ is not equal to the Cartan subalgebra
$\mathfrak{L}(\tilde{\mathbb T})$ (see \cite[1.14]{Ca2}). The
identity component $C^0_{{\mathbb G}}(b)$ is also defined over $K$
\cite[12.1.1]{Sp}. Since $K$ is a finite field, there exists a
$K$-defined Borel subgroup of $C^0_{{\mathbb G}}(b)$. Hence the
group $C^0_{{\mathbb G}}(b)(K)$ is not a torus (see the proof of
Lemma \ref{lem:reg}). Further,
\[
g \in \tilde{\mathbb T}(K) \leq C^0_{{\mathbb G}}(b)(K)\lneqq
\mathbb G(K).
\]
Note that $g$ does not commute with unipotent elements of
$C^0_{{\mathbb G}}(b)(K)$. Then there exists a subgroup $M \leq
C^0_{{\mathbb G}}(b)(K)$, which is a Chevalley group over some
finite extension of $K$, such that $g$ normalizes $M$ but does not
centralize it and $[M,M]/Z(M)$ is a finite group of Lie type. There
exists $m \in M$ such that $m\in M/Z(M)$ induces a diagonal
automorphism of $[M,M]/Z(M)$ and $g \in \langle m ,[M,
M]/Z(M)\rangle $. The group $\langle m ,[M, M]/Z(M)\rangle $ does
not satisfy {\bf(NS)} but $\vert [M, M]/Z(M)\vert < \vert L\vert$.
This is a contradiction with {\bf (MC)}.

\medskip

{\bf Case 3.} Suppose that $\langle g \rangle\times \langle a
\rangle \leq C_{{\mathbb G}(K)}(g)_{[\ell ]}$ for some $a\notin
\tilde{\mathbb T}(K)$.

\medskip

We have $aga^{-1}=g$ in ${\mathbb G}$, and thus $a \in C_{{\mathbb
G}}(g)\leq N_{{\mathbb G}}(\tilde{\mathbb T})$. As $a\notin
\tilde{\mathbb T}(K)$, we have $a\in N_{{\mathbb G}}(\tilde{\mathbb
T})\setminus \tilde{\mathbb T}$. Let $\mathfrak{g}, \mathfrak{a},
\mathfrak{T}$ be preimages in $\mathfrak{G}$ of $g, a,
\tilde{\mathbb T}$, respectively. Since
$\mathfrak{G}/Z(\mathfrak{G})={\mathbb G}$, we have
\begin{equation}
\mathfrak{a}\mathfrak{g}\mathfrak{a}^{-1} = \mathfrak{g}\mathfrak{c}
\label{diamond}
\end{equation}
for some $\mathfrak{c}\in Z(\mathfrak{G})$. Note that
$C_{\mathfrak{G}}(\mathfrak{g}) = \mathfrak{T}$ because
$\mathfrak{g}$ is regular in $\mathfrak{G}$ and
$\mathfrak{G}^\prime$ is simply connected. Since $a \in N_{{\mathbb
G}}(\tilde{\mathbb T})\setminus \tilde{\mathbb T}$, we have
$\mathfrak{c}\ne 1$.

\begin{lemma}
Equality $(\ref{diamond})$ cannot hold except possibly for the cases
$\mathfrak{G}^\prime = {\ASL_{\ell }}$ or $\mathfrak{G}^\prime
={\ASU_{\ell }}(q)$.
\end{lemma}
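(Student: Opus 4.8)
The plan is to deduce from $(\ref{diamond})$ that the centre $Z(\mathbb{G}_{sc})$ contains an element of order $\ell$, and then to read off the possible types of $\mathbb{G}_{sc}$. First I would pin down the order of $\mathfrak{c}$. Since $a$ has order $\ell$ in $\mathbb{G}=\mathfrak{G}/Z(\mathfrak{G})$, the preimage $\mathfrak{a}$ satisfies $\mathfrak{a}^{\ell}\in Z(\mathfrak{G})$, so $\mathfrak{a}^{\ell}$ commutes with $\mathfrak{g}$. As $\mathfrak{c}$ is central, an easy induction on $(\ref{diamond})$ gives $\mathfrak{a}^{k}\mathfrak{g}\mathfrak{a}^{-k}=\mathfrak{g}\mathfrak{c}^{k}$, and taking $k=\ell$ yields $\mathfrak{g}=\mathfrak{g}\mathfrak{c}^{\ell}$, whence $\mathfrak{c}^{\ell}=1$. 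Since $\mathfrak{c}\neq 1$ and $\ell$ is prime, $\mathfrak{c}$ has order exactly $\ell$.

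Next I would interpret $\mathfrak{c}$ inside the model $\mathfrak{G}=(\mathbb{H}\times\mathbb{G}_{sc})/\tilde{\mathbb{Z}}$ from the proof of Lemma~\ref{prop:red}, where $Z(\mathfrak{G})=\mathbb{H}$ and $\tilde{\mathbb{Z}}=\{(z,z^{-1}):z\in Z(\mathbb{G}_{sc})\}$. Because $\mathfrak{g}$ is regular semisimple and $\mathfrak{G}^{\prime}=\mathbb{G}_{sc}$ is simply connected, $C_{\mathfrak{G}}(\mathfrak{g})=\mathfrak{T}$ is connected, so the image $w\in W$ of $\mathfrak{a}$ is a well-defined Weyl element, and $\mathfrak{a}$ acts on $\mathfrak{T}$ as $w$. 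Writing $\mathfrak{T}=(\mathbb{H}\times\mathbb{T}_1)/\tilde{\mathbb{Z}}$ with $\mathbb{T}_1$ the maximal torus of $\mathbb{G}_{sc}$, taking $\mathfrak{g}$ to be the image of $(h,t)$, and using that $w$ fixes the central factor $\mathbb{H}$, the relation $(\ref{diamond})$ reads: image of $(h,w(t))$ equals image of $(hc,t)$, where $\mathfrak{c}$ is the image of $(c,1)$. Comparing components means $(c^{-1},w(t)t^{-1})\in\tilde{\mathbb{Z}}$, so $c=w(t)t^{-1}\in Z(\mathbb{G}_{sc})$. As the homomorphism $Z(\mathbb{G}_{sc})\to Z(\mathfrak{G})$, $z\mapsto$ image of $(z,1)$, is injective, the order of $c$ equals that of $\mathfrak{c}$, namely $\ell$. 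Thus $Z(\mathbb{G}_{sc})$ contains an element of order $\ell>3$. Running through the centres of the simple simply connected groups — cyclic of order $n$ in type $A_{n-1}$, and of order at most $4$ in every other type — this forces $\mathbb{G}_{sc}$ to be of type $A_{n-1}$ with $\ell\mid n$, so $\mathfrak{G}^{\prime}=\ASL_{n}$ or $\ASU_{n}(q)$.

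Finally I would show $n=\ell$. By Lemma~\ref{lem:GLU} I may take $\mathfrak{G}=\AGL_{n}$ (resp.\ $\mathfrak{G}=\AU_{n}(q)$), so $\mathfrak{g}\in GL_{n}(\overline{K})$ is regular semisimple, i.e.\ has $n$ pairwise distinct eigenvalues. Its image $g$ in the adjoint group has order $\ell$, so $\mathfrak{g}^{\ell}$ is scalar; hence all eigenvalues satisfy one equation $X^{\ell}=\mu$, which has at most $\ell$ roots, forcing $n\leq\ell$. Together with $\ell\mid n$ and $n\geq 2$ this gives $n=\ell$, so $\mathfrak{G}^{\prime}=\ASL_{\ell}$ or $\ASU_{\ell}(q)$, which is exactly the asserted exception.

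The one step that needs genuine care — and which I expect to be the main obstacle — is the passage in the middle paragraph from the a priori arbitrary torus element $\mathfrak{c}\in Z(\mathfrak{G})=\mathbb{H}$ to an element of the \emph{finite} group $Z(\mathbb{G}_{sc})$ of the same prime order $\ell$; this is what converts the problem into the purely combinatorial question about fundamental groups of root systems, after which the classification of centres and the eigenvalue count are routine. Here the regularity of $\mathfrak{g}$, hence the connectedness of $C_{\mathfrak{G}}(\mathfrak{g})$ coming from simple connectedness of $\mathfrak{G}^{\prime}$, is essential, since it is what guarantees that $\mathfrak{a}$ acts on $\mathfrak{T}$ through a single Weyl element $w$ with $w(t)t^{-1}$ landing in the centre.
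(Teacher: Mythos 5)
Your proof is correct and follows essentially the same strategy as the paper's: show that $\mathfrak{c}$ has order exactly $\ell$, deduce that $\ell$ divides the order of $Z(\mathfrak{G}')$ (which forces type $A_{n-1}$ since $\ell\geq 5$), and bound $n\leq\ell$ by counting the distinct eigenvalues of the regular element $\mathfrak{g}$, all of which satisfy $X^{\ell}=\mu$. Two local steps differ. For the passage from $\mathfrak{c}\in Z(\mathfrak{G})$ to $Z(\mathfrak{G}')$ --- the step you flag as the delicate one --- the paper simply observes that $\mathfrak{c}$ is the commutator $[\mathfrak{a},\mathfrak{g}]$ and hence lies in $\mathfrak{G}'\cap Z(\mathfrak{G})\leq Z(\mathfrak{G}')$; this one-line remark replaces your explicit computation $c=w(t)t^{-1}$ in the model $(\mathbb{H}\times\mathbb{G}_{\rm sc})/\tilde{\mathbb Z}$, which is also correct but does more work than necessary. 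For the final equality $n=\ell$, you combine $n\leq\ell$ with the divisibility $\ell\mid n$ read off from the order of $Z(\ASL_n)=\mu_n$, whereas the paper combines $n\leq\ell$ with the observation that $a\in N_{\mathbb G}(\tilde{\mathbb T})\setminus\tilde{\mathbb T}$ forces an element of order $\ell$ in $N_{\mathfrak{G}}(\mathfrak{T})/\mathfrak{T}\leq W(\AGL_n)=S_n$, impossible for $n<\ell$. Both closings are valid; yours is marginally more economical since it reuses the central element $c$ you have already produced.
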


\begin{proof}
As $\mathfrak{a}$ is a preimage of $a$ and $a^\ell =1$, we have
$\mathfrak{a}^\ell \in Z(\mathfrak{G})$. Hence $\mathfrak{c}^\ell
=1$. Thus $\ell$ is the order of $\mathfrak{c}$ (recall that
$\mathfrak{c}\ne 1$). Note that $\ell$ divides the order of
$Z(\mathfrak{G}^\prime)$ because $c = [\mathfrak{a},\mathfrak{g}]\in
Z(\mathfrak{G}^\prime)$. Since $\ell$ is a prime $\geq 5$, we have
$\mathfrak{G}^\prime  = \ASL_n$ or $\ASU_n(q)$ for some $n$.

Now we may assume $\mathfrak{G} = \AGL_n$ or $\mathfrak{G} =
\AU_n(q)$ (Lemma \ref{lem:GLU}).

Choose a preimage $\mathfrak{g}$ of $g$ of $\ell$-power order, say,
$\ell^s$. We have $\mathfrak{g}^\ell \in Z(\mathfrak{G}(K))$. We
have $\mathfrak{G}(\overline{K}) = GL_n(\overline{K})$. Note that
$\mathfrak{g}$ is a regular element in $ GL_n(\overline{K})$.
Therefore $n \leq \ell$ because all eigenvalues of $\mathfrak{g}$
are different and are of the form $\epsilon_{\ell
^s}\epsilon_{\ell}^m$ where $\epsilon_{\ell ^s}$ and
$\epsilon_{\ell}$ stand for fixed roots of unity of degrees $\ell
^s$ and $\ell$, respectively. Suppose that $n < \ell$.  Then the
Weyl group $W({\AGL_n})$ has no elements of order $\ell$. The
element $a$ is of order $\ell$ and, according to the hypothesis of
Case 3, belongs to $N_{{{\mathbb G}}}(\tilde{\mathbb T})\setminus
\tilde{\mathbb T}$. Since every element of
$N_{{\mathfrak{G}}}(\mathfrak{T})/\mathfrak{T}$ coincides with some
element of $W(GL_n)$ \cite[Proposition~3.3.6]{Ca2}, we have
$\mathfrak{a}\notin
N_{{\mathfrak{G}}}(\mathfrak{T})\setminus\mathfrak{T}$, and
therefore $a\notin N_{\mathbb G}(\tilde{\mathbb T})\setminus
\tilde{\mathbb T}$, contradiction with the choice of $a$. Hence
$\mathfrak{G}^\prime= {\ASL_{\ell}}$ or
$\mathfrak{G}^\prime={\ASU_{\ell}}(q)$.
\end{proof}

\begin{lemma}\label{le:glimp}
The case $\mathfrak{G}' = {\ASL_{\ell}}$ cannot occur.
\end{lemma}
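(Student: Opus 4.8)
The plan is to turn the abstract relation $(\ref{diamond})$ into an explicit matrix identity in $\GL_\ell$, extract from it the arithmetic condition $\ell\mid q-1$, and then exhibit an $x\in L$ for which $\langle g,xgx\min\rangle$ is non-solvable, contradicting the standing hypothesis on $g$. By Lemma \ref{lem:GLU} I may take $\mathfrak G=\AGL_\ell$, $\mathbb G=\PGL_\ell$, $L=\PSL_\ell(q)$. Since the centre of $\AGL_\ell$ is $\Gm$ and $H^1(F,\Gm)=1$, the projection $\GL_\ell(q)\to\PGL_\ell(q)$ is onto, so the preimages $\mathfrak g,\mathfrak a$ of $g,a$ can be chosen in $\mathfrak G(K)=\GL_\ell(q)$. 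Then $\mathfrak c=[\mathfrak a,\mathfrak g]$ is a scalar matrix lying in $\GL_\ell(q)$, i.e. $\mathfrak c=\zeta I$ with $\zeta\in\F_q^{*}$ of order $\ell$; hence $\ell\mid q-1$ and $\F_q$ contains a primitive $\ell$-th root of unity. As $\mathfrak g$ is regular and conjugation by $\mathfrak a$ multiplies it by $\zeta$, its $\ell$ eigenvalues $\epsilon_{\ell^{s}}\epsilon_\ell^{m}$ ($0\le m<\ell$) form a single $\zeta$-orbit and $\mathfrak a$ cyclically permutes the corresponding eigenlines.

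Consequently $\langle\mathfrak g,\mathfrak a\rangle$ is a Heisenberg group of order $\ell^{3}$, and its image $\langle g,a\rangle\cong(\Z/\ell)^{2}$ acts irreducibly on $K^{\ell}$. This is the genuine source of the difficulty: because the degree equals the prime $\ell$, the normalizer of such a subgroup is a maximal subgroup of $\PGL_\ell(q)$ of symplectic type $\ell^{2}{:}\Sp_2(\ell)$, and $g$ sits in its normal abelian part $(\Z/\ell)^{2}$. Conjugating $g$ by an element of this normalizer keeps $xgx\min$ inside that abelian part, so $\langle g,xgx\min\rangle$ stays solvable and the hypothesis on $g$ is not yet violated; the whole point of the lemma is to escape this trap.

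To do so I would invoke the theorem of Gow recalled above, exactly as in the proof of Proposition \ref{lem-Suzuki-Ree}. Since $g$ is regular semisimple, for an appropriate conjugacy class of regular semisimple elements I can choose $x\in L$ so that $z:=[g,x]$ generates a maximal torus whose type is incompatible with the field-extension (Singer) tori normalizing $\langle g,a\rangle$; then $H:=\langle g,xgx\min\rangle$ contains both $g$ and $z$ and so can lie neither in the Singer normalizer nor in any $\ell^{2}{:}\Sp_2(\ell)$ (whose order is prime to $|\langle z\rangle|$). Running through the remaining classes of maximal subgroups of $\PGL_\ell(q)$ that can contain an element of order $\ell$ of this kind — reducible, imprimitive, subfield, and the almost simple class — one checks that $H$ escapes each, whence $H\supseteq\PSL_\ell(q)$ is non-solvable. (In the split sub-case $\mathfrak g\in T$ there is a shortcut: $g$ normalizes a split root subgroup $\langle U_{\pm\alpha}\rangle\cong\SL_2(q)$ on which, by regularity, it acts as the inner element $h_\alpha(\mu)$ of order $\ell$, so Proposition \ref{lem-PSL-PSU} applied to $\PSL_2(q)$ directly produces $x$ with $\langle g,xgx\min\rangle$ non-solvable.)

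The main obstacle is precisely the symplectic-type subgroup $\ell^{2}{:}\Sp_2(\ell)$, which exists only because the degree is the prime $\ell$; this is exactly why the cases $\mathfrak G^\prime=\ASL_\ell$ and $\mathfrak G^\prime=\ASU_\ell(q)$ must be isolated and handled separately. The crux is to guarantee that $g$ and $xgx\min$ are not simultaneously trapped in a common conjugate of this subgroup: since $g$ already lies in its abelian normal part, this reduces to choosing $x$ (via Gow) so that $z=[g,x]$ forces $H$ outside every such trap, followed by the routine but lengthy verification against the maximal-subgroup list. Once a non-solvable $H=\langle g,xgx\min\rangle$ is produced for some $x\in L$, it contradicts the standing assumption that $\langle g,xgx\min\rangle$ is solvable for all $x\in L$, so the case $\mathfrak G^\prime=\ASL_\ell$ cannot occur.
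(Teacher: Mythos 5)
Your reduction of relation (\ref{diamond}) to the statement that $\mathfrak c$ is a scalar of order $\ell$ in $Z(\SL_\ell(K))$, so that $\ell\mid q-1$ and the eigenvalues of the regular element $\mathfrak g$ form a single orbit under multiplication by a primitive $\ell$-th root of unity, is correct and agrees with the setup of the paper. But from that point on the proposal is a programme, not a proof. The entire burden is carried by the sentence ``running through the remaining classes of maximal subgroups of $\PGL_\ell(q)$ \dots one checks that $H$ escapes each,'' and that verification is exactly the hard content of the lemma; it is not routine (reducible subgroups, imprimitive subgroups, subfield subgroups, the class-$\mathcal C_6$ normalizer $\ell^2{:}\Sp_2(\ell)$, and the almost simple class all genuinely contain regular semisimple elements of order $\ell$ when $\ell\mid q-1$), and none of it is carried out. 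Two of the specific assertions you do make are also unjustified: Gow's theorem, as quoted in Section 4, is a statement about $z$ and a regular class $C$ inside the \emph{simple} group $L$, whereas here $g$ may induce an outer diagonal automorphism of $\PSL_\ell(q)$, so its applicability needs an argument; and the claim that $|\ell^2{:}\Sp_2(\ell)|=\ell^3(\ell^2-1)$ is prime to $|\langle z\rangle|$ depends entirely on which torus you choose and on the arithmetic of $q$ and $\ell$, which you have not pinned down.

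The paper avoids all of this with a short linear-algebra argument that you should compare with. Since $\mathfrak g$ is regular with characteristic polynomial $x^\ell+(-1)^\ell\epsilon_{s-1}\in K[x]$, it is conjugate in $\GL_\ell(K)$ to the companion matrix of that polynomial, and after absorbing a diagonal factor one gets $y_1\in \SL_\ell(K)$ with $y_1\min\mathfrak g y_1$ a \emph{monomial} matrix projecting to an $\ell$-cycle under the map $\phi\colon M\to S_\ell$, where $M$ is the image in $\PGL_\ell(K)$ of the monomial subgroup. Replacing $g$ by this $L$-conjugate, the already established property {\bf(NS)} for $S_\ell$ (Lemma \ref{lm:sym}, $\ell\ge 5$) produces $m\in M$ with $\langle\phi(g),\phi(m)\phi(g)\phi(m)\min\rangle$ non-solvable, hence $\langle g,mgm\min\rangle$ non-solvable — no inspection of the maximal subgroups of $\PGL_\ell(q)$ is needed. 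If you want to salvage your route you must actually execute the Aschbacher-class analysis (or find another way to break out of the $\ell^2{:}\Sp_2(\ell)$ trap you correctly identified); as written, the proposal does not establish the lemma.
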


\begin{proof}
We have $\mathfrak{g}^\ell \in Z(GL_{\ell}(K))$. As in the previous
lemma, we may assume that $\mathfrak{g}^{\ell ^s} = 1$ for some $s$.
Thus $\epsilon_s =\sqrt[\ell ^s]{1}\notin K$ since otherwise $g$
would be a split semisimple element of $\mathbb G$ which would
contradict Lemma \ref{lem:parab}. On the other hand, $\epsilon_{s-1}
= \sqrt[\ell ^{s-1}]{1}\in K$ since $\mathfrak{g}^\ell = \di
(\epsilon_{s-1},\epsilon_{s-1}, \ldots,\epsilon_{s-1})$. Let
$\epsilon$ be an $\ell^{th}$ root of unity. In $GL_{\ell
}(\overline{K})$ one can represent $\mathfrak{g}$ by a diagonal
matrix of the form $\di(\epsilon_s\epsilon,
\epsilon_{s}\epsilon^{2}, \ldots, \epsilon_{s}\epsilon^{\ell})$.
Clearly, $det (\mathfrak{g})=\epsilon_{s-1}$ and the characteristic
polynomial of $\mathfrak{g}$ is $x^{\ell} +
(-1)^{\ell}\epsilon_{s-1}$. The matrix $\di(\epsilon_s\epsilon,
\epsilon_{s}\epsilon^{2}, \ldots, \epsilon_{s}\epsilon^{\ell})$ is
conjugate over $\overline{K}$ to its companion matrix
\[
\mathfrak{m} =\begin{pmatrix} 0&1&0&0&0&\cdots&0\cr
0&0&1&0&0&\cdots&0\cr \cdots\cr 0&0&0&0&0&\cdots&1\cr
\epsilon_{s-1}&0&0&0&0&\cdots&0\cr\end{pmatrix} \in GL_\ell(K).
\]
Since $\mathfrak{g}$ and $\mathfrak{m}$ have the same characteristic
polynomial and $\mathfrak{g}$ is a semisimple matrix, we have
\[
\mathfrak{g} = y \mathfrak{m} y^{-1}
\]
for some $y \in GL_\ell(K)$. Further, $y = y_1d$ where $y_1 \in
SL_\ell(K)$ and $d$ is a diagonal matrix. Hence $\mathfrak{g}_1 =
y_1^{-1}\mathfrak{g}y_1$ is a monomial matrix corresponding to an
$\ell$-cycle in $W({\AGL_\ell})$. Let now $g_1$ be the image of
$\mathfrak{g}_1$ in $PGL_n(K) = \mathbb G(K)$. The element $g_1$ is
conjugate to $g$ by an element of $PSL_n(K) = L$. Then we may assume
$g_1 = g$. Let $M$ be the image in $PGL_n(K)$ of all monomial
matrices of $GL_n(K)$. Then there exists a natural epimorphism $\phi
\colon M\rightarrow S_\ell$. We have $\phi(g) \in S_\ell$. Since
$S_\ell$ satisfies condition {\bf (NS)}, so does $M$. Then there
exists $m \in M$ such that $\langle g, mgm^{-1}\rangle$ is not
solvable which is a contradiction with the choice of $g$.
\end{proof}

\begin{lemma}
The case $\mathfrak{G}' = {\ASU_{\ell}}(q)$ cannot occur.
\end{lemma}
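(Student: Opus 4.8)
The plan is to follow the scheme of Lemma \ref{le:glimp} as closely as possible, the one genuinely new feature being that the companion matrix produced by the argument now has entries a priori only in $\mathbb F_{q^2}$ rather than in $K=\mathbb F_q$; keeping the construction inside the unitary group will therefore require a rationality (descent) step. By Lemma \ref{lem:GLU} I would take $\mathfrak G=\AU_\ell(q)$, so that $\mathfrak G(K)=GU_\ell(q)$, $\mathbb G(K)=PGU_\ell(q)$ and $L=PSU_\ell(q)$. As before, I would choose a preimage $\mathfrak g\in GU_\ell(q)$ of $g$ of $\ell$-power order $\ell^s$; then $\mathfrak g^\ell=\epsilon_{s-1}I$ is central, and $\mathfrak g$ is a regular semisimple element of $GL_\ell(\overline K)$ whose eigenvalues are the $\ell$ distinct $\ell$-th roots $\epsilon_s\epsilon,\dots,\epsilon_s\epsilon^{\ell}$ of $\epsilon_{s-1}$, with $\epsilon$ a primitive $\ell$-th root of unity.

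First I would determine the field of definition of $\epsilon_{s-1}$. Since $\mathfrak g\in GU_\ell(q)$ is fixed by $F_1=\sigma\tau$, and $\tau$ sends eigenvalues $\lambda\mapsto\lambda^q$ while $\sigma$ sends them $\lambda\mapsto\lambda^{-1}$, the eigenvalue multiset of $\mathfrak g$ is stable under $\lambda\mapsto\lambda^{-q}$. Applying this to the set of $\ell$-th roots of $\epsilon_{s-1}$ forces $\epsilon_{s-1}^{-q}=\epsilon_{s-1}$, that is $\epsilon_{s-1}^{q+1}=1$. This is the unitary counterpart of the condition $\epsilon_{s-1}\in K$ from Lemma \ref{le:glimp}; it places $\epsilon_{s-1}$ only in $\mu_{q+1}\subset\mathbb F_{q^2}^{*}$, so the characteristic polynomial $x^\ell+(-1)^\ell\epsilon_{s-1}$ of $\mathfrak g$ has coefficients in $\mathbb F_{q^2}$ and need not be defined over $K$. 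That $g$ is non-split (so that no conflict with Lemma \ref{lem:parab} arises) and that the torus of $\mathfrak g$ corresponds to an $\ell$-cycle of the Weyl group $S_\ell$ are both guaranteed by the hypothesis of Case 3, which furnishes an element of order $\ell$ in $N_{\mathbb G}(\tilde{\mathbb T})\setminus\tilde{\mathbb T}$.

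Next I would exhibit a monomial witness. Using $\epsilon_{s-1}^{q+1}=1$, one checks that a suitable monomial matrix $\mathfrak m$ (an $\ell$-cycle times a diagonal, with $\epsilon_{s-1}$ the nonzero corner entry) lies in $GU_\ell(q)$ and shares with $\mathfrak g$ the characteristic polynomial $x^\ell+(-1)^\ell\epsilon_{s-1}$. The step where the unitary case genuinely departs from the linear one, and the one I expect to be the main obstacle, is the descent of the conjugacy: $\mathfrak g$ and $\mathfrak m$ are conjugate in $GL_\ell(\overline K)$ because they are regular semisimple with equal characteristic polynomial, whereas I need them conjugate inside $GU_\ell(q)$ over $\mathbb F_q$, and the equal characteristic polynomial no longer forces this over $K$ as it did in $GL_\ell(K)$. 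I would settle this by a Lang--Steinberg argument: the common centralizer of either element in $GL_\ell(\overline K)$ is a maximal torus, hence connected, so $H^1$ of $F_1$ acting on it is trivial and the two $F_1$-fixed elements, being conjugate over $\overline K$, are already conjugate by some $y\in GU_\ell(q)$.

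Finally I would reproduce the endgame of Lemma \ref{le:glimp}. Writing $y=y_1d$ with $y_1\in SU_\ell(q)$ and $d$ diagonal, the element $\mathfrak g_1=y_1^{-1}\mathfrak gy_1$ is again monomial and represents an $\ell$-cycle in the Weyl group $W(\AGL_\ell)=S_\ell$. Its image $g_1$ in $PGU_\ell(q)=\mathbb G(K)$ is conjugate to $g$ by an element of $PSU_\ell(q)=L$, so I may assume $g_1=g$. Let $M$ be the image in $PGU_\ell(q)$ of the group of monomial matrices of $GU_\ell(q)$; the natural epimorphism $\phi\colon M\to S_\ell$ sends $g$ to a nontrivial $\ell$-cycle, an element of prime order $\ell>3$. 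Since $S_\ell$ satisfies {\bf(NS)} by Lemma \ref{lm:sym}, so does $M$, and pulling back a witness along $\phi$ yields $m\in M$ with $\langle g,mgm\min\rangle$ nonsolvable, contradicting the choice of $g$.
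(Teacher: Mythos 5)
Your proof is correct and follows essentially the same route as the paper's: reduce $\mathfrak g$ to a monomial matrix in the unitary group with nontrivial image in $S_\ell$, then contradict the choice of $g$ via \textbf{(NS)} for $S_\ell$. The one step you flag as the main obstacle --- descending the conjugacy of $\mathfrak g$ and $\mathfrak m$ from $GL_\ell(\overline K)$ to the finite unitary group --- is handled in the paper by citing \cite[Proposition~3.7.3]{Ca2}, which is exactly the Lang--Steinberg/connected-centralizer argument you give.
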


\begin{proof}
The same arguments as in the previous lemma imply that the element
$\mathfrak{g}\in \AU_\ell(q)(K)\leq GL_\ell(K)$ is conjugate in
$GL_\ell(K)$ to the matrix
\[
\mathfrak{m} =\begin{pmatrix} 0&1&0&0&0&\cdots&0\cr
0&0&1&0&0&\cdots&0\cr \cdots\cr 0&0&0&0&0&\cdots&1\cr
\epsilon_{s-1}&0&0&0&0&\cdots&0\cr\end{pmatrix}\in \AU_\ell(q)(K) =
\mathfrak{G}(K)
\]
for some $\epsilon_{s-1}\in \sqrt[\ell^{s-1}]{1} \in E = \mathbb
F_{q^2}$ such that $\epsilon_{s-1}\epsilon_{s-1}^q = 1$. Then the
elements $\mathfrak{g}$ and $\mathfrak{m}$ are conjugate by an
element of the group $\AU_\ell(q)(K)$ \cite[Proposition~
3.7.3]{Ca2}.

Note that $\AU_\ell(q)(K) = U_\ell(E)$ is the group of unitary
matrices in $GL_\ell(E)$  where $E = \mathbb F_{q^2}$, i.e., the matrices satisfying the
condition $(\tilde{A}^{-1})^{t} = A$ where $\tilde{A}$ is the matrix
obtained from $A$ by replacing all the entries $\alpha_{ij}$ with
$\alpha_{ij}^q$.

Let $DU_\ell(E)$ be the set of diagonal unitary matrices over $E$,
and let $W_\ell \leq GL_n(K)$ be the group of monomial matrices with
nonzero entries equal to $1$. Then
\[
\mathfrak{m} \in DU_\ell(E)W_\ell \leq U_\ell(E).
\]
Further, it is easy to see that $U_\ell(E) = DU_\ell(E)SU_\ell(E)$.
Since $\mathfrak{g}$ and $\mathfrak{m}$ are conjugate by an element
of $\AU_\ell(q)(K) = U_\ell(E)$, the element $\mathfrak{g}$ is
conjugate by some element of the group $SU_\ell(E)$ to some
$\mathfrak{m}^\prime \in DU_\ell(E)W_\ell$. Thus we may assume
$\mathfrak{g} = \mathfrak{m}^\prime \in  DU_\ell(E)W_\ell$.
Moreover, the image of $\mathfrak{g}$ in  the quotient $
DU_\ell(E)W_\ell/DU_\ell(E) \cong W_\ell \cong S_\ell$ is not
trivial. Hence, as in the previous lemma, we have a contradiction
with the choice of $g$.
\end{proof}

Theorem \ref{th:rank} is proved. \qed

\section{Case ${}^2F_4$} \label{sec:F4}

In order to complete the proof of Theorem \ref{th:radel}, it remains
to consider the case of groups of type ${}^2F_4 (q^2)$.

\begin{theorem} \label{th:F4}
Let $L$ be a  group of type ${}^2F_4(q^2)$, $q^2 > 2$, and $L\leq
G\leq \Aut L$. Then $G$ satisfies {\bf{(NS)}}.
\end{theorem}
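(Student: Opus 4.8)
The plan is to run the proof along the same inner/field dichotomy used throughout Sections \ref{sec:rank1} and \ref{sec:gen}. Since ${}^2F_4$ is an adjoint type whose simply connected cover has trivial centre, $L$ admits no diagonal automorphisms, and being already twisted it has no graph or graph--field automorphisms of odd prime order; hence by Proposition \ref{prop:aut_prime} the element $g$ of prime order $\ell>3$ is either an inner automorphism (an element of $L$) or a standard field automorphism. Note that the hypothesis $q^2>2$ excludes the Tits group, already handled in Proposition \ref{prop:small}.

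The field case I would dispose of exactly as in the field-automorphism step of Theorem \ref{th:rank} and Proposition \ref{lem-Ree-field}. A standard field automorphism $g$ of order $\ell$ stabilizes each root subgroup and acts on it through the field automorphism of the same order, so there is a $g$-stable rank-one subgroup $L_1=\langle \mathfrak X_{\beta},\mathfrak X_{-\beta}\rangle$ on which $g$ acts nontrivially. Then $g$ induces an automorphism of prime order $\ell>3$ on the simple rank-one group $L_2=L_1/Z(L_1)$, and a non-solvable pair inside the almost simple group $\langle g,L_2\rangle$, guaranteed by Theorem \ref{prop:rank1}, lifts through the natural surjection to a non-solvable pair $\langle g,xgx^{-1}\rangle$ with $x\in L$.

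For the inner case $g\in L$ is semisimple (its order is an odd prime, while $K$ has characteristic $2$), so $g$ lies in some maximal torus, with $C_L(g)=T$ a single maximal torus when $g$ is regular. I would first handle the elements that can be pushed into a rank-one subgroup: if $\ell$ divides the order of a maximal torus meeting a subgroup of type ${}^2B_2(q^2)$, $PSL_2$, or $PSU_3$ --- equivalently, if $g$ acts nontrivially on a $K$-rational rank-one root subgroup --- then either $g$ lies in such a subgroup $S\le L$, so that Propositions \ref{lem-Suzuki-Ree} and \ref{lem-PSL-PSU} applied inside $S$ give $x\in S\le L$ directly, or $g$ normalizes one nontrivially and the lifting argument above applies. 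Using the explicit list of torus orders of ${}^2F_4(q^2)$, a short arithmetic check (for instance $\gcd(q^8-q^4+1,\,q^4+1)\mid 3$, so that $\ell\nmid|{}^2B_2(q^2)|$) shows that the only residual possibility is $\ell\mid q^8-q^4+1$, i.e. $g$ generates an $\ell$-subgroup of one of the two anisotropic cyclic maximal tori of orders $q^4\pm\sqrt{2}\,q^3+q^2\pm\sqrt{2}\,q+1$. For such $g$ no $K$-rational root subgroup is available, since its containing torus lies in no $F$-stable Borel; this is precisely why ${}^2F_4$ is excluded from the Coxeter-element argument of Theorem \ref{th:rank} and must be treated separately here.

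This anisotropic case is the main obstacle, and I would treat it as in the Suzuki--Ree Proposition \ref{lem-Suzuki-Ree}: such $g$ is regular semisimple and lies in no proper parabolic subgroup (as in Lemma \ref{lem:parab}), and $N_L(T)/T$ has order prime to $\ell$. By Gow's theorem I would choose a conjugate so that $z=[g,x]$ generates a maximal torus of a different class, forcing $H=\langle g,xgx^{-1}\rangle\not\le N_L(T)$. It then remains to rule out, using Malle's classification of the maximal subgroups of ${}^2F_4(q^2)$, every other maximal overgroup of $H$: torus normalizers are excluded since $z\notin N_L(T)$ while every order-$\ell$ element of any $N_L(T')$ lies in $T'$; the reductive maximal subgroups of type ${}^2B_2(q^2)$ and $SU_3(q^2)$ are excluded because $\ell$ does not divide their orders; and subfield subgroups ${}^2F_4(q_0^2)$, together with the finitely many exceptional maximal subgroups on Malle's list, are excluded by a primitive-prime-divisor argument showing $\ell\nmid|{}^2F_4(q_0^2)|$ for proper subfields and $\ell$ too large for the sporadic cases. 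Hence $H=L$, which is non-solvable. The delicate part is the bookkeeping in this last step, namely matching the arithmetic of the torus orders against Malle's maximal-subgroup list.
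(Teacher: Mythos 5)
Your field-automorphism case and the overall shape of the inner case (Gow's theorem plus Malle's list) match the paper, but two steps in the inner case are genuine gaps as written. First, your reduction to the tori of order dividing $q^8-q^4+1$ rests on the claimed equivalence ``$\ell$ divides the order of a maximal torus meeting a rank-one subgroup $\Longleftrightarrow$ $g$ lies in, or acts nontrivially on, a $K$-rational rank-one subgroup.'' This is not an equivalence and the forward implication is exactly what needs proof: an element of order $\ell\mid q^4+1$ may sit in an anisotropic maximal torus of order $(q^2+\sqrt2 q+1)^2$, normalizing no root subgroup whatsoever, and placing it inside a conjugate of $Sz(q^2)\wr 2$ or $Sp_4(q^2)$ requires a separate Sylow/centralizer analysis that you do not supply. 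The paper sidesteps this entirely with a different dichotomy: if $g$ lies in a proper parabolic subgroup, the argument of Lemma \ref{lem:parab} pushes $g$ into the rank-one Levi quotient and Theorem \ref{prop:rank1} applies; if not, then by \cite[6.4.5]{Ca2} the centralizer $C_L(g)$ has odd order, which is precisely the hypothesis needed to apply Gow's theorem \cite{Gow} to \emph{every} such $g$, regardless of which torus contains it.

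Second, your endgame is explicitly left as ``delicate bookkeeping'' against Malle's list, and that is where the proof actually lives. The decisive input from \cite{Ma}, which you do not isolate, is that for $\mathfrak T_1=T_{10}$ and $\mathfrak T_2=T_{11}$ (cyclic of the coprime orders $q^4\mp\sqrt2 q^3+q^2\mp\sqrt2 q+1$) the normalizer $N_L(\mathfrak T_i)$ is the \emph{unique} maximal subgroup of $L$ containing $\mathfrak T_i$, and $|N_L(\mathfrak T_i)/\mathfrak T_i|$ has only $2$ and $3$ as prime divisors. Granting this, no case-by-case exclusion of subfield subgroups, $PGU_3(q^2)$, $Sp_4(q^2).2$, or the sporadic maximal subgroups at $q^2=8$ is needed: coprimality of the two orders shows $g$ avoids all conjugates of at least one $\mathfrak T_i$, Gow's theorem puts a generator of that $\mathfrak T_i$ into $H=\langle g,xgx^{-1}\rangle$ as the commutator $[g,x]$, the condition $\ell>3$ forces $g\notin N_L(\mathfrak T_i)$, and uniqueness of the maximal overgroup of $\mathfrak T_i$ gives $H=L$ at once. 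Note also that the paper does not need $g$ itself to lie in $T_{10}$ or $T_{11}$ --- only $[g,x]$ does --- which is another reason the first reduction you attempt is unnecessary. Without the uniqueness statement your primitive-prime-divisor exclusions are incomplete (for instance, nothing in your sketch rules out $H$ lying in a maximal torus normalizer other than that of $g$'s own torus except an argument you only gesture at), so the proof as proposed does not close.
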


\begin{proof} Let $g\in G.$ If $g$ is a field automorphism of $L$,
then it normalizes but does not centralize a group of rank 1, and we
can use Theorem \ref{prop:rank1}.

Thus we assume that $g$ induces an inner-diagonal automorphism of
$L$. Note that every inner-diagonal automorphism is an inner
automorphism in the case $L = {}^2F_4(q^2)$. Hence $g \in L$. Further, one can define Borel and
parabolic subgroups in ${}^2F_4 (q^2)$ (see \cite{Ca2}) because
${}^2F_4 (q^2)$ has a $BN$-pair. One can also represent $L$ in the
form $\mathbb G(\overline{{\mathbb F}}_2)^{F}$ where $\mathbb G$ is
the algebraic group of type $F_4$ defined over ${\mathbb F}_2$ and
$F$ is the Frobenius map corresponding to the group ${}^2F_4 (q^2)$.
We can define ``tori'' of $L$ as groups of $F$-invariant elements of
$F$-stable tori in $\mathbb G(\overline{\mathbb F}_2)$. Denote by
$T$ the group of $F$-invariant elements of an $F$-stable quasisplit
torus. If $g \in T$, then $g$ normalizes but does not centralize a
subgroup of $L$ which is a simple group of Lie type of rank one, and
we can use Theorem \ref{prop:rank1}. We can also use Theorem
\ref{prop:rank1} in the case when $g$ belongs to a parabolic
subgroup of $L$ (see the proof of Lemma \ref{lem:parab}). If $g$
does not belong to any proper parabolic subgroup $P \leq G$, then
the order of $C_L(g)$ is odd \cite[6.4.5]{Ca2}, and therefore (see
\cite{Gow}) we can write every semisismple element $s$ (up to
conjugacy) in the form $[g,x]$ with $x \in L$.

Among maximal tori of $L$ one can find two tori $\mathfrak{T}_1$,
$\mathfrak{T}_2$ satisfying the following conditions \cite{Ma}:

(1) $\mathfrak{T}_1$ and $\mathfrak{T}_2$ are cyclic groups;

(2) $y \mathfrak{T}_1 y^{-1} \cap \mathfrak{T}_2 = 1$ for every
$y\in L$;

(3) $N_{L}(\mathfrak{T}_i)$, $i = 1,2,$ is the only maximal subgroup
of $L$ containing $\mathfrak{T}_i$;

(4) the only prime divisors of $\mid N_{L}(\mathfrak{T}_i)/
\mathfrak{T}_i\mid $ are $2$ and $3$.

In the notation of \cite{Ma}, one can take $\mathfrak{T}_1=T_{10}$
and $\mathfrak{T}_2=T_{11}$. These groups are cyclic, and their
orders are $N^-=q^4-\sqrt{2}q^3+q^2-\sqrt{2}q+1$ and
$N^+=q^4+\sqrt{2}q^3+q^2+\sqrt{2}q+1$, respectively. It is easy to
check condition (2) by showing that $N^+$ and $N^-$ are coprime (one
can see that looking at their sum and difference).

By (2), we may assume that $g$ does not belong to a torus conjugate
to one of those $\mathfrak{T}_1, \mathfrak{T}_2$, say, to
$\mathfrak{T}_1$, but $[g,x]$ is a generator of $\mathfrak{T}_1$.
Since $\ord g = \ell > 3$, condition (4) implies that $g \notin
N_{L}(\mathfrak{T}_1)$. We have $\mathfrak{T}_1 \leq H = \langle g ,
x gx\min\rangle \nleq N_{L}(\mathfrak{T}_i)$. By \cite{Ma}, we get $H =
L$, and we have property {\bf (NS)} for the group $L$.
\end{proof}

\noindent {\it Acknowledgements}. Gordeev was supported in part by
 RFBR grant N-08-01-00756-A.
Kunyavski\u\i \ and Plotkin were supported in part by the Ministry
of Absorption (Israel) and the Minerva Foundation through the Emmy
Noether Research Institute of Mathematics. A substantial part of
this work was done in MPIM (Bonn) during the visits of Kunyavski\u\i
\ and Plotkin in 2007, the visit of Plotkin in 2008, and the visit
of Gordeev in 2009. The work was discussed by all the coauthors
during the international workshops hosted by the
Heinrich-Heine-Universit\"at (D\"usseldorf) in 2007 and 2008. The
support of these institutions is highly appreciated.



\begin{thebibliography}{[BGGKPP]}

\bibitem[AM]{AM}
A. Adem, R. J. Milgram, {\it Cohomology of Finite Groups}, 2nd ed.,
Springer, Berlin et al., 2004.

\bibitem[AL]{AL}
J. Alperin, R. Lyons, {\it On conjugacy classes of $p$-elements}, J.
Algebra {\bf 19} (1971) 536--537.



\bibitem[Ba]{Ba}
R. Baer, {\it Engelsche Elemente Noetherscher Gruppen}, Math. Ann.
{\bf 133} (1957) 256--270.

\bibitem[BBGKP]{BBGKP}
T.~Bandman, M.~Borovoi, F.~Grunewald,  B.~Kunyavski\u\i ,
E.~Plotkin, {\it Engel-like characterization of radicals in finite
dimensional Lie algebras and finite groups}, Manuscr. Math. {\bf
119} (2006) 365--381.

\bibitem[BGGKPP]{BGGKPP}
T.~Bandman,  G.-M.~Greuel, F.~Grunewald, B.~Kunyavski\u\i ,
G.~Pfister, E.~Plotkin, {\it Identities for finite solvable groups
and equations in finite simple groups}, Compositio Math. {\bf 142}
(2006) 734--764.




\bibitem[BT]{BT}
A. Borel, J. Tits, {\it \'El\'ements unipotents et sous-groupes
paraboliques de groupes r\'eductifs}, I, Invent. Math. {\bf 12}
(1971) 95--104.

\bibitem[BW]{BW}
R. Brandl, J. S. Wilson, {\it Characterization of finite soluble
groups by laws in a small number of variables}, J. Algebra {\bf 116}
(1988) 334--341.

\bibitem[BWW]{BWW}
J.~N.~Bray, J.~S.~Wilson, R.~A.~Wilson, {\it A characterization of
finite soluble groups by laws in two variables}, Bull. London Math.
Soc. {\bf 37} (2005) 179--186.


\bibitem[Ca1]{Ca1}
R.~W.~Carter, {\it Simple Groups of Lie Type}, John Wiley \& Sons,
London et al., 1972.

\bibitem[Ca2]{Ca2}
R.~W.~Carter, {\it Finite Groups of Lie Type. Conjugacy Classes and
Complex Characters}, John Wiley \& Sons, Chichester et al., 1985.



\bibitem[CCNPW]{CCNPW}
J. H. Conway, R. T. Curtis, S. P. Norton, R. A. Parker,
R.~A.~Wilson, {\it Atlas of Finite Groups}, Clarendon Press, Oxford,
1985.




\bibitem[Fl1]{Fl1}
P. Flavell, {\it Finite groups in which every two elements generate
a soluble group}, Invent. Math. {\bf 121} (1995) 279--285.

\bibitem[Fl2]{Fl2}
P. Flavell, {\it A weak soluble analogue of the Baer--Suzuki
Theorem}, preprint, available on the homepage of the author at
\newline http://web.mat.bham.ac.uk/P.J.Flavell/research/preprints .

\bibitem[Fl3]{Fl3}
P. Flavell, {\it On the Fitting height of a soluble group that is
generated by a conjugacy class}, J. London Math. Soc. {\bf 66}
(2002) 101--113.


\bibitem[FGG]{FGG}
P. Flavell, S. Guest, R. Guralnick, {\it Characterizations of the
solvable radical}, submitted.






\bibitem[GGKP1]{GGKP1}
N.~Gordeev, F.~Grunewald,  B.~Kunyavski\u\i , E.~Plotkin, {\it On
the number of conjugates defining the solvable radical of a finite
group}, C. R. Acad. Sci. Paris, S\'er. I  {\bf 343} (2006) 387--392.

\bibitem[GGKP2]{GGKP2}
N.~Gordeev, F.~Grunewald,  B.~Kunyavski\u\i , E.~Plotkin, {\it A
commutator description of the solvable radical of a finite group},
Groups, Geometry, and Dynamics {\bf 2} (2008) 85--120.

\bibitem[GGKP3]{GGKP3}
N.~Gordeev, F.~Grunewald,  B.~Kunyavski\u\i ,  E.~Plotkin, {\it A
description of Baer--Suzuki type of the solvable radical of a finite
group}, J. Pure Appl. Algebra {\bf 213} (2009) 250--258.

\bibitem[GGKP4]{GGKP4}
N.~Gordeev, F.~Grunewald,  B.~Kunyavski\u\i ,  E.~Plotkin, {\it
Baer--Suzuki theorem for the solvable radical of a finite group}, C.
R. Acad. Sci. Paris, S\'er. I {\bf 347} (2009)  217--222.

\bibitem[GoS]{GoS}
N. Gordeev, J. Saxl, {\it Products of conjugacy classes in Chevalley
groups, I: Extended covering numbers}, Israel J. Math. {\bf 130}
(2002) 207--248.


\bibitem[GL]{GL}
D.~Gorenstein, R.~Lyons, {\it The Local Structure of Finite Groups
of Characteristic $2$ Type}, Mem. Amer. Math. Soc., vol.~42, Number
276, Providence, RI, 1983.

\bibitem[GLS]{GLS}
D.~Gorenstein, R.~Lyons, R.~Solomon, {\it The Classification of the
Finite Simple Groups}, Number 3, Math. Surveys and Monographs,
vol.~40, no.~3, Amer. Math. Soc., Providence, RI, 1998.

\bibitem[Gow]{Gow}
R. Gow, {\it Commutators in finite simple groups of Lie type}, Bull.
London Math. Soc. {\bf 32} (2000) 311--315.



\bibitem[Gu]{Gu}
S. Guest, {\it A solvable version of the Baer--Suzuki theorem},
Trans. Amer. Math. Soc., to appear.

\bibitem[GKPS]{GKPS}
R. Guralnick, B. Kunyavski\u\i , E. Plotkin, A. Shalev, {\it
Thompson-like characterization of radicals in groups and Lie
algebras}, J. Algebra {\bf 300} (2006) 363--375.

\bibitem[GPS]{GPS}
R. Guralnick, E. Plotkin, A. Shalev, {\it  Burnside-type problems
reated to solvability}, Internat. J. Algebra and Computation {\bf
17} (2007) 1033--1048.


\bibitem[GS]{GS}
R. M. Guralnick, J. Saxl, {\it Generation of finite almost simple
groups by conjugates}, J. Algebra {\bf 268} (2003) 519--571.



\bibitem[Hu1]{Hu1}
J. E. Humphreys, {\it Linear Algebraic Groups}, Springer-Verlag,
Berlin--Heidelberg--New York, 1981.

\bibitem[Hu2]{Hu2}
J. E. Humphreys, {\it Modular Representations of Finite Groups of
Lie Type},  London Math. Soc. Lecture Note Ser. {\bf 326}, Cambridge
Univ. Press, 2005.


\bibitem[Kl1]{Kl1}
P.~Kleidman, {\it The subgroup structure of some finite simple
groups}, Ph.D. thesis, Univ. of Cambridge, 1987.

\bibitem[Kl2]{Kl2}
P.~Kleidman, {\it The maximal subgroups of the Chevalley groups
$G_2(q)$ with $q$ odd, the Ree groups ${}^2G_2(q)$, and their
automorphism groups}, J. Algebra {\bf 117} (1988) 30--71.



\bibitem[LLS]{LLS}
R. Lawther, M. W. Liebeck, G. Seitz, {\it Fixed point ratios in
actions of finite exceptional groups of Lie type}, Pacific J. Math.
{\bf 205} (2002) 393--464.



\bibitem[LN]{LN}
V. M. Levchuk, Ya. N. Nuzhin, {\it Structure of Ree groups}, Algebra
i Logika {\bf 24} (1985), no.~1, 26--41; English transl. in Algebra
and Logic {\bf 24} (1985), no.~1, 16--26.






\bibitem[Ma]{Ma}
G. Malle, {\it The maximal subgroups of ${}^2F_4(q^2)$}, J. Algebra
{\bf 139} (1991) 52--69.



\bibitem [Ro]{Ro}
D.~J.~S.~Robinson, {\it A Course in the Theory of Groups},
Springer-Verlag, New York, 1995.



\bibitem [Sp]{Sp}
T. A. Springer, {\it Linear Algebraic Groups}, 2nd ed., Progress in
Math. {\bf 9}, Birkh\"auser, Boston, 1998.

\bibitem[SS]{SS}
T. A. Springer, R. Steinberg, {\it  Conjugacy classes}, Seminar on
Algebraic Groups and Related Finite Groups, Lecture Notes Math. {\bf
131}, Springer-Verlag, Berlin--New York, 1970, pp.~167--266.


\bibitem [St1]{St1}
R. Steinberg, {\it Lectures on Chevalley Groups}, Yale University,
1967.

\bibitem[St2]{St2}
R. Steinberg, {\it Conjugacy Classes in Algebraic Groups}, Lecture
Notes Math. {\bf 366}, Springer-Verlag, Berlin--New York, 1974.


\bibitem[Su1]{Su1}
M. Suzuki, {\it On a class of doubly transitive groups}, Ann. Math.
{\bf 75} (1962) 105--145.

\bibitem[Su2]{Su2}
M. Suzuki, {\it Finite groups in which the centralizer of any
element of order $2$ is $2$-closed}, Ann. Math. {\bf 82} (1965)
191--212.

\bibitem[Th]{Th}
J.~Thompson, {\it Non-solvable finite groups all of whose local
subgroups are solvable}, Bull. Amer. Math. Soc. {\bf 74} (1968)
383--437.



\bibitem[V]{V}
F. D. Veldkamp, {\it Roots and maximal tori in finite forms of
semisimple algebraic groups}, Math. Ann. {\bf 207} (1974) 301--314.


\bibitem[Wi]{Wi}
J. S. Wilson, {\it Characterization of the soluble radical by a
sequence of words}, preprint, 2008.





\end{thebibliography}
\end{document}